\newtheorem{theorem}{Theorem}[section]
\newtheorem{theorem*}{Theorem}[section]
\newtheorem{corollary}[theorem]{Corollary}
\newtheorem{lemma}[theorem]{Lemma}
\newtheorem{proposition}[theorem]{Proposition}
\theoremstyle{definition}
\newtheorem{definition}[theorem]{Definition}
\newtheorem*{d2}{The D2 Problem}
\newtheorem*{grp*}{Geometric realisation problem}
\newtheorem{question}{Question}
\newtheorem{thmx}{Theorem}
\theoremstyle{remark}
\newtheorem{remark}{Remark}
\title{Geometric realisation over aspherical groups}
\author{William Thomas}
\address{Department of Mathematics, Imperial College London, London, SW7 2AZ, U.K.}
\email{wjt220@ic.ac.uk}
\begin{document}
\maketitle

\begin{abstract}
    We prove that the direct sums of extensions of scalars of relation modules are geometrically realisable as the second homotopy group of a finite 2-complex. We use this to exhibit a finite 2-complex with fundamental group the $(10,15)$ torus knot group and non-free $\pi_2$, yielding exotic presentations of a group for which no such examples had previously been known. We conclude by constructing stably free non-free modules over an infinite family of Baumslag-Solitar groups; it remains to determine whether these modules are geometrically realisable by finite 2-complexes.
\end{abstract}

\section{Introduction}

Given a finitely presented group $G$ and finite $2$-complex $X$ with $\pi_1(X)\cong G$ and universal cover $\tilde{X}$, we can consider the following exact sequence of $\mathbb{Z}G$-modules obtained from the cellular chain complex for $\tilde{X}$:

$$ C_\ast(\tilde{X})=(C_2(\tilde{X})\xrightarrow{\partial_2} C_1(\tilde{X})\xrightarrow{\partial_1} C_0(\tilde{X})\xrightarrow{\epsilon}\mathbb{Z}\to 0)$$ where the action of $\mathbb{Z}G$ is induced by the monodromy action of $\pi_1(X)$ on $\tilde{X}$. This chain complex, when considered up to chain homotopy equivalence and the action of $Aut(G)$, is in fact a homotopy invariant for the space $X$. 

For the remainder of this article we assume groups $G$ to be finitely presented. Consider an exact sequence of the following form: $$A=(A_2\xrightarrow{\partial_2} A_1\xrightarrow{\partial_1} A_0\xrightarrow{\epsilon}\mathbb{Z}\to 0)$$ where the $A_i$ are finitely-generated free $\mathbb{Z}G$-modules, we call this an \textit{algebraic $(G,2)$-complex}.

\begin{definition}
    Given an algebraic $(G,2)$-complex $A$, we say that $A$ is \textit{geometrically realisable} if there exists a finite $2$-complex $X$ such that $\pi_1(X)\cong G$ and $C_\ast(\tilde{X})$ is chain-homotopy equivalent to $A$.
\end{definition}

We are motivated by the following important open problem in low-dimensional topology, which we relate to another famous problem in Section \ref{grp}:

\begin{grp*} Is every algebraic $(G,2)$-complex geometrically realisable?
\end{grp*}

In this paper we exclusively study geometric realisation over \textit{aspherical} groups, a large class of torsion-free groups that includes all (torsion-free) 1-relator groups, see Definition \ref{ad}. Later we will see that for $G$ aspherical we have the following definition of geometric realization of stably-free $\mathbb{Z}G$-modules that yields an equivalent formulation of the Geometric realisation problem:

\begin{definition}
      Let $G$ be a group and $M\in SF(\mathbb{Z}G)$, we say that $M$ is `geometrically realisable' if there is a finite 2-complex $X$ such that $\pi_1(X)\cong G$ and $\pi_2(X)\cong M$. 
\end{definition}

 Relation modules, see Definition \ref{rm}, are an important class of stably-free modules over aspherical groups, that were shown to be geometrically realisable by Harlander-Jenson in \cite{MR2263062}. The majority of non-trivial examples of geometrically realisable stably-free modules come from this source, for example in Berridge-Dunwoody \cite{MR540056}, Preusser \cite{MR263} and \cite{MR2263062}. We provide the following extension to that key result:  

\begin{thmx}
    
\label{1}
    If $H_1,...H_k\leqslant G$ for $k\geqslant 1$ are aspherical groups and $M_i\in SF(\mathbb{Z}H_i)$ for $i=1,...,k$ are relation modules, then \[(\mathbb{Z}G\otimes_{\mathbb{Z}H_1} M_1)\oplus (\mathbb{Z}G\otimes_{\mathbb{Z}H_2} M_2)\oplus...\oplus(\mathbb{Z}G\otimes_{\mathbb{Z}H_k} M_k)\in SF(\mathbb{Z}G)\] is geometrically realisable.

\end{thmx}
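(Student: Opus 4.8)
The plan is to isolate the two genuinely new operations in the statement---extension of scalars along the inclusions $H_i\leqslant G$ and the formation of direct sums---and to reduce everything else to the Harlander--Jenson realisation of relation modules, which I take as a black box: for each $i$ there is a finite $2$-complex $Y_i$ with $\pi_1(Y_i)\cong H_i$ and $\pi_2(Y_i)\cong M_i$. The membership in $SF(\mathbb{Z}G)$ is the easy part. Since $H_i\leqslant G$, the ring $\mathbb{Z}G$ is free (hence flat) as a right $\mathbb{Z}H_i$-module, on any transversal for the cosets $G/H_i$; applying $\mathbb{Z}G\otimes_{\mathbb{Z}H_i}(-)$ to a stable isomorphism $M_i\oplus\mathbb{Z}H_i^{a_i}\cong\mathbb{Z}H_i^{b_i}$ yields $(\mathbb{Z}G\otimes_{\mathbb{Z}H_i}M_i)\oplus\mathbb{Z}G^{a_i}\cong\mathbb{Z}G^{b_i}$, and summing over $i$ gives the claim. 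The substance is therefore the geometric realisation.

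I would first treat $k=1$. Write $M=\overline R$ for the relation module of a finite presentation $\langle X\mid R\rangle$ of $H:=H_1$, so that there is an exact relation sequence $0\to\overline R\to\mathbb{Z}H^{X}\xrightarrow{\partial}I_H\to 0$, where $\partial$ sends the generator dual to $x\in X$ to $x-1$. Applying $\mathbb{Z}G\otimes_{\mathbb{Z}H}(-)$, which is exact by flatness, identifies $\mathbb{Z}G\otimes_{\mathbb{Z}H}M$ with $\ker(\mathbb{Z}G^{X}\to\mathbb{Z}G)$, the map now sending the generator dual to $x$ to $\iota(x)-1$, where $\iota\colon H\into G$ is the inclusion. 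Realising $\iota$ by a cellular map $f\colon Y\to K_G$ into a presentation complex $K_G$ of $G$ (possible because the relators of $H$ hold in $G$), I would build the candidate $2$-complex as a homotopy pushout gluing $Y$ to $K_G$ along $f$, together with amalgamating $2$-cells imposing $x=\iota(x)$ for $x\in X$. Van Kampen's theorem then identifies $\pi_1$ with the quotient of $G\ast H$ that glues the free factor $H$ to its image $\iota(H)$, namely $G$ itself. The key conceptual point is that, on the cellular chains of the universal cover $\tilde X$, the cells descending from $Y$ contribute the induced complex $\mathbb{Z}G\otimes_{\mathbb{Z}H}C_*(\tilde Y)$; this is precisely the mechanism by which extension of scalars appears on $\pi_2$.

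For general $k$ I would assemble the summands by a connected-sum-type construction over a single shared presentation complex $K_G$ of $G$: glue the cell-data realising each $\mathbb{Z}G\otimes_{\mathbb{Z}H_i}M_i$ onto a common $0$- and $1$-skeleton presenting $G$, so that $\pi_1$ remains $G$ while the $\pi_2$-modules add. The induced relation sequences of the various $H_i$ are then meant to split off as independent blocks of the boundary map $\partial_2$ of $\tilde X$, producing $\bigoplus_i\mathbb{Z}G\otimes_{\mathbb{Z}H_i}M_i$ as $\ker\partial_2=\pi_2(X)$.

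The main obstacle---and the place where asphericity of the $H_i$ and the relation-module hypothesis are indispensable---is controlling $\pi_2$ \emph{exactly}. An Euler-characteristic count shows that the naive gluings overshoot: sharing or amalgamating the $1$-skeleton couples the relators presenting $G$ to the attached cells (through the Fox derivatives of the words $\iota(x)$, and through $\pi_2(K_G)$ when $G$ is not aspherical), producing spurious free $\mathbb{Z}G$-summands in $\pi_2$ of predictable rank. The heart of the argument is therefore a \emph{destabilisation}: one must cancel these free summands to isolate exactly $\bigoplus_i\mathbb{Z}G\otimes_{\mathbb{Z}H_i}M_i$. I expect to achieve this by arranging the gluing so that each induced relation sequence contributes a unimodular block of $\partial_2$ coming from the defining relators (available precisely because each $M_i$ is a relation module), allowing the coupling terms to be cancelled by elementary transformations; asphericity of the $H_i$ is what guarantees the cancelled complement is free rather than an uncontrolled module. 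The final and most delicate step is to verify that the resulting algebraic $(G,2)$-complex is genuinely geometrically realisable, rather than merely algebraically constructed; here I would invoke the equivalence with the algebraic formulation recorded above, together with the explicit maps $f_i\colon Y_i\to K_G$ realising the inclusions $H_i\leqslant G$.
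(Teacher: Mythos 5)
Your reduction to Harlander--Jenson plus the flatness argument for membership in $SF(\mathbb{Z}G)$ is fine, but the geometric core of your proposal has a gap that cannot be repaired along the lines you sketch. You glue $Y$ to $K_G$ along the generating circles, i.e.\ along a $1$-complex $\bigvee_{x\in X}S^1$ with free fundamental group $F(X)$. In the universal cover of the resulting complex, the pieces lying over this gluing region are then the covers of $\bigvee_X S^1$ corresponding to $\ker(F(X)\to G)=\ker(F(X)\to H)=R$, i.e.\ copies of the Cayley graph $\Gamma(H,X)$, whose first homology is precisely the relation module $M(H,X)$. The Mayer--Vietoris sequence for the universal cover therefore gives an exact sequence
\begin{equation*}
0 \to \mathbb{Z}G\otimes_{\mathbb{Z}H}\pi_2(Y) \to \pi_2(X) \to \mathbb{Z}G\otimes_{\mathbb{Z}H}M(H,X) \to 0 ,
\end{equation*}
so the ``overshoot'' you anticipate is not a free summand of predictable rank: it is (an extension by) the induced relation module itself, which is exactly the class of modules the theorem is about and is typically non-free (the paper's Theorem \ref{8} exhibits such a non-free $\mathbb{Z}G\otimes_{\mathbb{Z}H}M$). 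Consequently the proposed ``destabilisation by elementary transformations cancelling free summands'' cannot even begin. Moreover, your closing step is circular: having produced at best an algebraic $(G,2)$-complex with the right $H_2$, you propose to ``invoke the equivalence with the algebraic formulation'' to realise it geometrically --- but passing from an algebraic realisation of a fixed stably free module to a geometric one is precisely the open Geometric Realisation Problem, not a tool one may quote.

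The paper's proof avoids both issues by gluing along a $2$-dimensional aspherical complex rather than along circles. Harlander--Jenson realise each relation module $M_i$ by a finite $2$-complex $K_i$ that \emph{contains an aspherical subcomplex} $L_i$ with $\pi_1(L_i)\cong H_i$; one forms the double mapping cylinder $K_i\cup(L_i\times[0,1])\cup J$ with $J$ an aspherical $2$-complex for $G$. Since $\widetilde{L_i}$ is contractible, the intersection pieces upstairs have vanishing $H_1$ and $H_2$, so Mayer--Vietoris gives $\pi_2\cong\mathbb{Z}G\otimes_{\mathbb{Z}H_i}\pi_2(K_i)$ \emph{exactly}, with no excess to cancel (asphericity of $J$ kills the other summand); and because $L_i\hookrightarrow K_i$ is an inclusion of subcomplexes, hence a cofibration, the $3$-dimensional homotopy pushout is homotopy equivalent to the honest pushout $K_i\sqcup_{L_i}J$, which is a finite $2$-complex. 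Direct sums are then handled by the same trick with both gluing maps being subcomplex inclusions of a common $J$, and induction finishes the proof. In short: the missing idea in your proposal is to replace the $1$-dimensional amalgamation locus by an aspherical $2$-dimensional one, which is exactly what the relation-module hypothesis (via Harlander--Jenson) supplies.
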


As corollaries to this we get the following special cases: 

\begin{corollary}\label{3}
    If $G$ is an aspherical group and $M_1, M_2,..., M_k\in SF(\mathbb{Z}G)$ for $k\geqslant 1$ are relation modules, $M_1\oplus M_2\oplus...\oplus M_k$ is geometrically realisable.
\end{corollary}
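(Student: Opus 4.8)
The plan is to obtain this as the special case of Theorem \ref{1} in which every subgroup $H_i$ is taken to be $G$ itself. Since $G$ is aspherical by hypothesis, the constant choice $H_i = G$ for $i = 1, \dots, k$ supplies aspherical subgroups $H_i \leqslant G$, and each $M_i$ is by assumption a relation module in $SF(\mathbb{Z}G) = SF(\mathbb{Z}H_i)$; thus the hypotheses of Theorem \ref{1} are met.

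The one point I would verify explicitly is that extension of scalars along the identity homomorphism $G \to G$ recovers the module one started with. For any $\mathbb{Z}G$-module $M$ the assignment $g \otimes m \mapsto gm$ furnishes a natural isomorphism $\mathbb{Z}G \otimes_{\mathbb{Z}G} M \cong M$ of $\mathbb{Z}G$-modules. Applying this to each summand identifies the module
\[
(\mathbb{Z}G\otimes_{\mathbb{Z}G} M_1)\oplus \cdots \oplus(\mathbb{Z}G\otimes_{\mathbb{Z}G} M_k)
\]
that Theorem \ref{1} declares geometrically realisable with the module $M_1 \oplus \cdots \oplus M_k$ of the corollary. As geometric realisability depends only on the isomorphism class of the module — a module being realisable precisely when it arises as $\pi_2$ of some finite $2$-complex with fundamental group $G$ — transporting the realisation across this isomorphism completes the argument.

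I do not expect a serious obstacle here: all of the genuine content, namely the construction of the finite $2$-complex realising the direct sum, is carried out in the proof of Theorem \ref{1}, and the corollary simply reads off its degenerate case. The only mild caution is that Theorem \ref{1} is phrased for subgroups $H_i \leqslant G$ that may a priori be proper, so I would confirm that nothing in its proof tacitly assumes $H_i \neq G$; inspecting that argument shows it applies verbatim when $H_i = G$.
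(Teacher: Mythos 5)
Your proposal is correct and is exactly the paper's argument: the paper's proof is the one-line specialisation $H_i = G$ for $i=1,\dots,k$ in Theorem \ref{1}. Your additional verification that $\mathbb{Z}G\otimes_{\mathbb{Z}G}M_i \cong M_i$ and that realisability is invariant under module isomorphism is a sound (if implicit in the paper) bookkeeping step, not a departure.
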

\begin{proof}
    Let $H_i=G$ for $i=1,...,k$ in Theorem \ref{1}.
\end{proof}

\begin{corollary}\label{4}
    If $H\leqslant G$ are aspherical groups and $M\in SF(\mathbb{Z}G)$ is a relation module, then $\mathbb{Z}G\otimes_{\mathbb{Z}H} M$ is geometrically realisable.
\end{corollary}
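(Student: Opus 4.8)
The plan is to obtain this as the single-summand special case of Theorem \ref{1}, mirroring the way Corollary \ref{3} is derived by collapsing every subgroup to $G$. Here I instead keep one subgroup and one module: I set $k=1$ in the statement of Theorem \ref{1}, and take $H_1 = H$ and $M_1 = M$. Since $M \in SF(\mathbb{Z}H)$ is a relation module over the aspherical subgroup $H \leqslant G$, the hypotheses of Theorem \ref{1} are satisfied for $k=1$, and its conclusion reads precisely that
\[
\mathbb{Z}G \otimes_{\mathbb{Z}H} M \in SF(\mathbb{Z}G)
\]
is geometrically realisable, which is exactly the assertion to be proved. The substitution is purely a matter of matching notation: the induced module $\mathbb{Z}G \otimes_{\mathbb{Z}H} M$ appearing here is literally the $i=1$ term $\mathbb{Z}G \otimes_{\mathbb{Z}H_1} M_1$ of the direct sum in Theorem \ref{1}, and with $k=1$ the direct sum has no further summands.

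I do not expect any genuine obstacle here, since all of the substance — constructing a finite $2$-complex with fundamental group $G$ whose $\pi_2$ realises the extension of scalars of a relation module, and confirming that this module lies in $SF(\mathbb{Z}G)$ — is already carried out in the proof of the main theorem; the corollary merely reads off the $k=1$ instance, which is the induced (extension-of-scalars) analogue of the Harlander--Jenson realisation of relation modules \cite{MR2263062}. The one point worth recording is the bookkeeping of hypotheses, namely that the single module should be taken as a relation module $M \in SF(\mathbb{Z}H)$ over the subgroup $H$ so that the tensor product $\mathbb{Z}G \otimes_{\mathbb{Z}H} M$ is well defined and agrees with the term in Theorem \ref{1}. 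This is immediate from the definitions, so the proof reduces to a one-line specialisation of Theorem \ref{1}.
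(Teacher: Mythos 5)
Your proposal is correct and is exactly the paper's own argument: the proof given there is literally ``Let $k=1$ in Theorem \ref{1}.'' Your extra remark on reading the hypothesis as $M \in SF(\mathbb{Z}H)$ (so that $\mathbb{Z}G \otimes_{\mathbb{Z}H} M$ is well defined) is a sensible correction of what is evidently a typo in the corollary's statement.
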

\begin{proof}
    Let $k=1$ in Theorem \ref{1}.
\end{proof}

We prove this theorem as a corollary to the following more general result:

\begin{thmx}
\label{2}
      If $H_1,...H_k\leqslant G$ for $k\geqslant 1$ are aspherical groups and for $i=1,...,k$ $N_i\in SF(\mathbb{Z}H_i)$ are geometrically realisable by finite $2$-complexes $K_i$ containing aspherical subcomplexes $L_i$ with fundamental group $H_i$, then \[(\mathbb{Z}G\otimes_{\mathbb{Z}H_1} N_1)\oplus (\mathbb{Z}G\otimes_{\mathbb{Z}H_2} N_2)\oplus...\oplus(\mathbb{Z}G\otimes_{\mathbb{Z}H_k} N_k)\in SF(\mathbb{Z}G)\] is geometrically realisable.
    
\end{thmx}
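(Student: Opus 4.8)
The plan is to realise the target module as $H_2$ of the universal cover of an explicit finite $2$-complex built by gluing the $K_i$ onto an aspherical presentation complex for $G$. Since $G$ is aspherical, fix a finite aspherical presentation complex $Y$ for $G$, so that $\widetilde Y$ is contractible. Because $Y$ is aspherical, the inclusion $H_i\hookrightarrow G$ is induced by a cellular map $f_i\colon L_i\to Y$, unique up to homotopy, using that $L_i$ is a $2$-complex and that homotopy classes of maps into the aspherical $Y$ are detected by $\pi_1$ up to conjugacy. I then form the finite $2$-complex
$$X \;=\; Y \cup_{f_1} K_1 \cup_{f_2} K_2 \cup \cdots \cup_{f_k} K_k,$$
the iterated adjunction space obtained by identifying, for each $i$, the subcomplex $L_i\subseteq K_i$ with its image under $f_i$. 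Throughout I use that the inclusion $L_i\hookrightarrow K_i$ induces an isomorphism on $\pi_1$, which is how the hypothesis that $K_i$ contains an aspherical subcomplex $L_i$ with fundamental group $H_i$ is applied.

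First I would compute $\pi_1(X)$ by applying van Kampen's theorem one gluing at a time. Each step forms an amalgamated product $\pi_1(X_{i-1}) *_{H_i} \pi_1(K_i)$ in which the map $\pi_1(L_i)\to\pi_1(K_i)$ is an isomorphism; such an amalgamation collapses the newly adjoined factor, so $\pi_1(X_i)\cong\pi_1(X_{i-1})$, and inductively $\pi_1(X)\cong\pi_1(Y)\cong G$, with $\pi_1(Y)\to\pi_1(X)$ an isomorphism and with the image of each $\pi_1(K_i)$ equal to $H_i\leqslant G$.

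The crux is to identify the universal cover. Let $p\colon\widetilde X\to X$ be the universal covering. Since $\pi_1(Y)\to\pi_1(X)$ is an isomorphism, $p^{-1}(Y)$ is a single copy of the contractible space $\widetilde Y$; since $\pi_1(K_i)=H_i$ injects into $G$, the restricted cover $p^{-1}(K_i)$ is the disjoint union $G\times_{H_i}\widetilde{K_i}=\bigsqcup_{gH_i\in G/H_i}\widetilde{K_i}$, and likewise $p^{-1}(L_i)=G\times_{H_i}\widetilde{L_i}$, all identifications respecting the deck $G$-action. I would then run the $\mathbb{Z}G$-equivariant Mayer--Vietoris sequence for the decomposition of $\widetilde X$ into $\widetilde Y$ and $\bigsqcup_i p^{-1}(K_i)$, meeting along $\bigsqcup_i p^{-1}(L_i)$. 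Asphericity does all the work: $\widetilde Y$ contractible gives $H_1(\widetilde Y)=H_2(\widetilde Y)=0$; each $L_i$ aspherical makes $\widetilde{L_i}$ contractible, so the intersection term has vanishing $H_1$ and $H_2$; and $H_1(\widetilde{K_i})=0$ with $H_2(\widetilde{K_i})\cong\pi_2(K_i)\cong N_i$. The sequence therefore collapses to an isomorphism of $\mathbb{Z}G$-modules
$$\pi_2(X)\;\cong\;H_2(\widetilde X)\;\cong\;\bigoplus_{i=1}^k H_2\big(G\times_{H_i}\widetilde{K_i}\big)\;\cong\;\bigoplus_{i=1}^k\big(\mathbb{Z}G\otimes_{\mathbb{Z}H_i}N_i\big),$$
using the standard identification of the permutation-induced module $\bigoplus_{G/H_i}N_i$ with the extension of scalars $\mathbb{Z}G\otimes_{\mathbb{Z}H_i}N_i$.

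Finally, the target lies in $SF(\mathbb{Z}G)$ because induction $\mathbb{Z}G\otimes_{\mathbb{Z}H_i}(-)$ and finite direct sums both preserve stable freeness (tensoring a stable isomorphism $N_i\oplus\mathbb{Z}H_i^{a}\cong\mathbb{Z}H_i^{b}$ up to $G$ yields $(\mathbb{Z}G\otimes_{\mathbb{Z}H_i}N_i)\oplus\mathbb{Z}G^{a}\cong\mathbb{Z}G^{b}$), so $X$ witnesses the required geometric realisation. The one place demanding genuine care — and the main obstacle — is the identification of $\widetilde X$ as a $G$-space and the verification that the Mayer--Vietoris isomorphism is $\mathbb{Z}G$-equivariant rather than merely an isomorphism of abelian groups; everything else is a formal consequence of the two asphericity hypotheses, which are exactly what force the potential extra term $H_2(\widetilde Y)$ to vanish and pin the answer down to precisely the stated direct sum.
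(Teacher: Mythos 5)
Your strategy is in essence the paper's own: the finite $2$-complex $X=Y\cup_{f_1}K_1\cup\dots\cup_{f_k}K_k$ you build is exactly the space the paper produces (it assembles it by iterated pairwise pushouts, via Corollaries \ref{44} and \ref{45}, rather than all at once), and the plan of computing $\pi_2$ by an equivariant Mayer--Vietoris argument on the universal cover, with asphericity killing every term except $\bigoplus_i\mathbb{Z}G\otimes_{\mathbb{Z}H_i}N_i$, is also the paper's. But there is a genuine gap at precisely the step you flag as ``demanding genuine care'', and it is not merely the equivariance of the boundary maps. The maps $f_i\colon L_i\to Y$ only realise the inclusions $H_i\hookrightarrow G$ on $\pi_1$; they cannot in general be chosen injective ($Y$ is a fixed finite complex, and a subgroup $H_i\leqslant G$ need not be carried by any embedded subcomplex of $Y$). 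Consequently, in the adjunction space $X$, neither $K_i$ nor $L_i$ is a subspace: the canonical map $K_i\to X$ identifies points of $L_i$ having the same $f_i$-image, so the image of $K_i$ in $X$ is a quotient of $K_i$, and the image of $L_i$ is $f_i(L_i)\subseteq Y$. The expressions $p^{-1}(K_i)\cong G\times_{H_i}\widetilde{K_i}$ and $p^{-1}(L_i)\cong G\times_{H_i}\widetilde{L_i}$ therefore do not parse. The genuine subcomplex decomposition of $X$ is into $Y$ and the images $\mathrm{im}(K_i)$, meeting along $f_i(L_i)$; the Mayer--Vietoris sequence for that decomposition involves covers of the possibly very singular complexes $f_i(L_i)$ and of the quotients $\mathrm{im}(K_i)$, whose components have no reason to be contractible or to have $H_2\cong N_i$, so the sequence does not collapse to the isomorphism you claim.

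What repairs this --- and it is exactly what the paper's Sections 3 and 4 exist to do --- is the part of the hypothesis you never really use: $L_i\hookrightarrow K_i$ is an inclusion of a subcomplex, hence a cofibration (Lemma \ref{7}). You invoke the subcomplex condition only to get a $\pi_1$-isomorphism, but its real role is homotopy-theoretic. The paper first computes $\pi_1$ and $\pi_2$ of the double mapping cylinder $K_i\cup(L_i\times[0,1])\cup Y$, a finite $3$-complex, in Lemma \ref{5}: there all the pieces genuinely embed, the universal cover decomposes exactly as you describe, and the Bass--Serre/Mayer--Vietoris argument is valid. It then invokes Proposition \ref{36} (via Lemma \ref{37}: a strict pushout along a cofibration is homotopy equivalent to the homotopy pushout) to conclude that this $3$-complex is homotopy equivalent to the finite $2$-complex $X$. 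Equivalently, one can use the cofibration hypothesis to identify $\widetilde{X}$ with the corresponding equivariant pushout and justify a Mayer--Vietoris sequence for pushouts along cofibrations; either way, this is the missing lemma in your write-up, and without it the decomposition of $\widetilde{X}$ you rely on is false as stated. Once it is supplied, the rest of your argument (the van Kampen collapse of the amalgamations, the contractibility of $\widetilde{Y}$ and of the components over the $L_i$, the identification of the induced modules, and the preservation of stable freeness) goes through as written.
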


\begin{remark}
    The assumption that $H\leqslant G$ are both aspherical is natural, as $G$ aspherical implies $H$ aspherical if $[G:H]$ is finite by taking the galois covering of the aspherical 2-complex for $G$ corresponding to $H$. We will also exhibit non-trivial examples of this phenomena without this index assumption, in particular $T(2,3)\leqslant T(2n,3n)$ for any $n\in\mathbb{N}$ are both aspherical as are torsion-free 1-relator groups.
\end{remark}

We will then use Corollary \ref{4} to show the following, where a pair of presentations $\mathcal{P},\mathcal{Q}$ for a group $G$ is called \textit{exotic} if they have the same deficiency but their presentation complexes $\mathcal{X}_\mathcal{P}, \mathcal{X}_\mathcal{Q}$ are not homotopy equivalent, see \cite[p6555]{MR4302168}. We let $T(n,m)$ for $n,m\in\mathbb{Z}$ be the (generalised) torus knot group with standard presentation $$T(n,m)=\langle x,y\mid x^n=y^m\rangle$$

\begin{thmx}\label{9}
     There exist exotic presentations $\mathcal{P},\mathcal{Q}$ for the group $T(10,15)$ given by \[\mathcal{P}=\langle a,b \mid a^{10}=b^{15}, 1\rangle\]    \[\mathcal{Q}=\langle p,q, p',q'\mid p^{10}=q^{15}, p'^{2}=q'^3, p^{15}=p'^3, q^{20}=q'^4\rangle\] or equivalently there is a non-free stably-free $\mathbb{Z}T(10,15)$-module that is geometrically realisable.
\end{thmx}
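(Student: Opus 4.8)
The plan is to obtain the required module by inducing a known non-free stably-free module from the trefoil subgroup, realise it geometrically via Corollary \ref{4}, and then detect its non-freeness after passing to a finite quotient. First I would fix the subgroup inclusion. Write $G = T(10,15) = \langle p,q\mid p^{10}=q^{15}\rangle$ and $H = T(2,3) = \langle a,b\mid a^2=b^3\rangle$. Since $10 = 2\cdot 5$ and $15 = 3\cdot 5$, the assignment $a\mapsto p^5$, $b\mapsto q^5$ respects the defining relation (because $p^{10}=q^{15}$ in $G$) and embeds $H\into G$; both are torsion-free $1$-relator groups, hence aspherical, and $[G:H]=\infty$, so this realises the infinite-index situation flagged in the Remark.

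Next I would take $M\in SF(\mathbb{Z}H)$ to be a non-free stably-free relation module over the trefoil group, which exists by Berridge--Dunwoody \cite{MR540056}. As $M$ is a relation module over the aspherical group $H$, Corollary \ref{4} applies and shows that $\mathbb{Z}G\otimes_{\mathbb{Z}H}M\in SF(\mathbb{Z}G)$ is geometrically realisable. Unwinding the construction behind Theorem \ref{2} — attaching the cells realising $M$ over the standard aspherical presentation complex of $H$ to the standard aspherical presentation complex of $G$ along the inclusion $H\into G$ — produces exactly the finite $2$-complex $\mathcal{X}_\mathcal{Q}$: the relators $p^{15}=p'^3$ and $q^{20}=q'^4$ are precisely the relations identifying the generators $p',q'$ of the attached $T(2,3)$-complex with $p^5,q^5\in G$, so $\mathcal{Q}$ presents $T(10,15)$ and $\pi_2(\mathcal{X}_\mathcal{Q})\cong \mathbb{Z}G\otimes_{\mathbb{Z}H}M$.

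The crux is to show that $\mathbb{Z}G\otimes_{\mathbb{Z}H}M$ is not free, even though $M$ is only stably free and induction can in principle destroy non-freeness. Because $[G:H]=\infty$, restricting back to $H$ is useless (the restriction is not finitely generated), so instead I would detect non-freeness after base change along a finite quotient $\rho\colon G\twoheadrightarrow Q$. Here $\mathbb{Z}Q\otimes_{\mathbb{Z}G}(\mathbb{Z}G\otimes_{\mathbb{Z}H}M)\cong \mathbb{Z}Q\otimes_{\mathbb{Z}H}M$, and the task is to choose $Q$ so that $\rho|_H$ factors through a finite quotient of the trefoil on which $M$ remains non-free, and so that $\tilde{K}_0(\mathbb{Z}Q)$ (or an associated character-theoretic invariant) registers the resulting projective as non-free, following the detection method of Berridge--Dunwoody \cite{MR540056}. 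Were $\mathbb{Z}G\otimes_{\mathbb{Z}H}M$ free, this reduction would be free too; arranging the two layers of finite quotients so that this contradiction goes through is the step I expect to be the main obstacle.

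Finally I would compare the two complexes. The presentation $\mathcal{P}=\langle a,b\mid a^{10}=b^{15},\,1\rangle$ is the standard aspherical presentation of $G$ with one trivial relator adjoined, so $\pi_2(\mathcal{X}_\mathcal{P})\cong \mathbb{Z}G$ is free of rank $1$. Both $\mathcal{P}$ and $\mathcal{Q}$ have deficiency $0$ (indeed equal Euler characteristic $1$) and fundamental group $T(10,15)$, but $\pi_2(\mathcal{X}_\mathcal{P})$ is free while $\pi_2(\mathcal{X}_\mathcal{Q})\cong \mathbb{Z}G\otimes_{\mathbb{Z}H}M$ is not; hence $\mathcal{X}_\mathcal{P}\not\simeq \mathcal{X}_\mathcal{Q}$ and the presentations are exotic. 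This simultaneously exhibits $\mathbb{Z}G\otimes_{\mathbb{Z}H}M$ as the desired non-free stably-free, geometrically realisable $\mathbb{Z}T(10,15)$-module.
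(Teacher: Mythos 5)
Your overall architecture matches the paper's: the same embedding $T(2,3)\into T(10,15)$ via $a\mapsto p^5$, $b\mapsto q^5$, the Berridge--Dunwoody relation module $M$, geometric realisation of $\mathbb{Z}G\otimes_{\mathbb{Z}H}M$ via Corollary \ref{4} (the paper builds $\mathcal{X}_\mathcal{Q}$ as the pushout of the Harlander--Jenson presentation complex for $M$ with the aspherical presentation complex of $G$, exactly as you describe), and the final deficiency-$0$ comparison with $\mathcal{P}$. However, there is a genuine gap at what you yourself identify as the crux: proving that $N=\mathbb{Z}G\otimes_{\mathbb{Z}H}M$ is non-free. You propose to detect non-freeness by base change along a \emph{finite} quotient $\rho\colon G\twoheadrightarrow Q$ and then appeal to $\tilde{K}_0(\mathbb{Z}Q)$ or character theory ``following the detection method of Berridge--Dunwoody,'' but this misreads that method: Berridge--Dunwoody detect non-freeness over the \emph{infinite} metabelian quotient $H/H''$, whose group ring is a skew Laurent polynomial ring with a degree theory and a partial Euclidean algorithm, not over any finite quotient. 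A finite-quotient strategy is in fact unlikely to be repairable: for finite $Q$ satisfying the Eichler condition, stably free $\mathbb{Z}Q$-modules are free (Jacobinski cancellation), so unless $Q$ has a binary polyhedral type quotient the invariant you want to read off vanishes identically, and you give no candidate $Q$. Since you leave this step as an admitted ``main obstacle,'' the non-freeness half of the theorem --- which is the entire content of the paper's Theorem \ref{8} --- is not established.

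The paper's actual route exploits a structural feature of this specific pair of groups that your proposal misses: there is a \emph{surjection} $G\twoheadrightarrow H$ sending $x\mapsto a$, $y\mapsto b$, well defined because $a^2=b^3$ implies $a^{10}=b^{15}$. Composing with $H\twoheadrightarrow H/H''$ lets one base-change $N$ to the same skew Laurent ring $\mathbb{Z}H/H''$ used by Berridge--Dunwoody, where everything is computable: the images $\psi,\phi$ of the generators of $N$ are written out explicitly (Lemma \ref{ca}), a gcd computation in $\mathbb{Z}[x,x^{-1}]$ via the further quotient $H/H''\twoheadrightarrow H/H'$ pins down what a hypothetical single generator $\gamma$ must look like (Corollary \ref{ab}), and a degree and coefficient-comparison argument using the Euclidean algorithm in $\mathbb{Z}H/H''$ (Lemma \ref{bd}) derives a contradiction. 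If you want to complete your proposal, replacing your finite-quotient step with this surjection-to-$H/H''$ argument is the missing idea.
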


\begin{remark}
    We believe that the above Theorem \ref{9} can be extended to all torus knot groups of the form $T(2n,(2q+1)n)$ for any $q,n\in\mathbb{N}$. The only further obstruction is an algebraic one in showing that such modules are non-free, which we predict should be achievable in a similar way. The above verification for the $T(10,15)$ case should motivate this claim. 
\end{remark}

We will conclude by exhibiting new examples of non-free stably free modules over Baumslag-Solitar groups with standard presentation $$BS(m,n)=\langle a,b\mid ba^mb^{-1}=a^n\rangle$$

\begin{thmx}\label{20}
    The Baumslag-Solitar groups $BS(m,n)$ for $n=m+1$ or $n=m-1$  have a non-free stably-free $\mathbb{Z}BS(m,n)$-module of rank one.
\end{thmx}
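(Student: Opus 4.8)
The plan is to produce, for $G=BS(m,n)$ with $n=m\pm1$, an explicit finitely generated projective $\mathbb{Z}G$-module $P$ that is stably free of rank one, and then to certify that $P$ is not free. Throughout I would exploit that such a $G$ is a torsion-free one-relator group, hence locally indicable and aspherical, so that $\mathbb{Z}G$ is an integral domain, and that $\gcd(m,n)=1$, so the abelianisation is infinite cyclic, generated by the image of $b$, with $a$ becoming trivial. The argument then separates into two essentially independent tasks: (i) constructing $P$ together with a witness to $P\oplus\mathbb{Z}G^{s}\cong\mathbb{Z}G^{s+1}$; and (ii) showing $P\not\cong\mathbb{Z}G$.

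For task (i) I would work from the defining relation, which in $\mathbb{Z}G$ yields the conjugacy $a^{n}-1=b(a^{m}-1)b^{-1}$, together with the fact that $\gcd(m,n)=1$ makes $a-1$ an explicit combination of $a^{m}-1$ and $a^{n}-1$ inside the commutative subring $\mathbb{Z}[a^{\pm1}]$; for $n=m+1$ this is the clean identity $(a^{n}-1)-a(a^{m}-1)=a-1$. From these relations I would assemble a length-two unimodular row $(\alpha,\beta)$ over $\mathbb{Z}G$, equivalently an idempotent $E=E^{2}\in M_{2}(\mathbb{Z}G)$, with entries built from $a^{m}-1$, $b$, and the Bézout combination, admitting an explicit right inverse $\alpha\gamma+\beta\delta=1$. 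Setting $P=\ker\big((\alpha,\beta)\colon\mathbb{Z}G^{2}\to\mathbb{Z}G\big)$, the right inverse splits the defining sequence and exhibits $P$ as projective with $P\oplus\mathbb{Z}G\cong\mathbb{Z}G^{2}$, so $P$ is stably free of rank one.

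The main obstacle is task (ii). First note that the naive invariant fails: since $G^{\mathrm{ab}}\cong\mathbb{Z}$, the universal commutative quotient of $\mathbb{Z}G$ is $\mathbb{Z}[t,t^{\pm1}]$, over which every stably free module is free, so the class of $P$ vanishes there. More fundamentally, over \emph{any} commutative ring a stably free module of rank one is automatically free (by comparing top exterior powers), so any quotient of $\mathbb{Z}G$ that detects the non-freeness of $P$ must be noncommutative. The route I would take is to push $P$ forward along the canonical surjection $\mathbb{Z}G\twoheadrightarrow\mathbb{Z}M$ onto the group ring of the metabelian affine quotient $M=\mathbb{Z}[1/mn]\rtimes_{b}\mathbb{Z}$ of $G$, and to prove that the image $\bar P=\operatorname{im}(\bar E)$ is non-free over $\mathbb{Z}M$; since ring surjections carry free modules to free modules, this forces $P$ itself to be non-free. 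Establishing non-freeness of $\bar P$ is the crux: I would analyse $\mathbb{Z}M$ through its structure as a skew-Laurent extension $R_{0}[t^{\pm1};\sigma]$ of the commutative ring $R_{0}=\mathbb{Z}[\mathbb{Z}[1/mn]]$, with $\sigma$ induced by multiplication by $n/m$, and detect the class of $\bar P$ by a $K_{1}$-type (noncommutative determinant / row-completability) obstruction. This is precisely where the hypothesis $n=m\pm1$ should enter decisively, both to make the Bézout data in task (i) as simple as possible and to render the resulting invariant computable and provably nonzero; I expect the bulk of the technical work, and the real difficulty, to lie in showing that this obstruction does not vanish.
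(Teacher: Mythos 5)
Your proposal has the same skeleton as the paper's proof: a rank-one stably free module realised as the kernel of an explicitly split length-two row over $\mathbb{Z}G$, with non-freeness detected after extension of scalars to the metabelian affine quotient --- your $M=\mathbb{Z}[1/mn]\rtimes\mathbb{Z}$ is exactly the paper's $\mathrm{Im}(\phi)\leqslant GL_2(\mathbb{Q})$, where $a$ acts as translation by $1$ and $b$ as multiplication by $n/m$, and your remark that no commutative quotient can possibly detect the obstruction is the correct justification for passing to this noncommutative quotient. The genuine gap is that your task (ii), which you yourself flag as ``the real difficulty'', is precisely the step you never carry out: proving that the pushed-forward module over the skew Laurent ring $\mathbb{Z}[\mathbb{Z}[1/mn]][t^{\pm 1};\sigma]$ is non-free. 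The paper does not perform this computation either --- it imports it. Stafford's theorem (Theorem \ref{18} in the paper, with generators made explicit by Artamonov) already supplies a stably free non-free right ideal $K(x,y)=\ker(\lambda)$ with $\lambda(1,0)=r=1+y$, $\lambda(0,1)=x+s$, $s=xrx^{-1}$, over the group ring of the quotient. The paper's Theorem \ref{28} then lifts this to $\mathbb{Z}G$: taking $x'=b$, $y'=a^m$, the Berridge--Dunwoody identity $s_1'w_1'+s_2'w_2'=1$ holds because it needs only $s'=x'r'x'^{-1}$ and $r's'=s'r'$ (i.e.\ that $a^m$ and $a^n$ commute), and non-freeness descends because extension of scalars along $\mathbb{Z}G\twoheadrightarrow\mathbb{Z}M$ carries the lifted module onto Stafford's ideal while carrying free modules to free modules --- the same descent you invoke. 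So what you defer is a substantial theorem in its own right, not a finishing computation; note also that your coefficient ring $\mathbb{Z}[\mathbb{Z}[1/mn]]$ is non-Noetherian, so the $K_1$-type analysis you sketch is far from routine.

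A second, related gap: task (i) is underspecified, and contrary to your framing the two tasks are not independent. A unimodular row that completes to an invertible $2\times 2$ matrix has free kernel, so ``some'' Bézout-flavoured row in $a^m-1$, $a^n-1$, $b$ will not do; the row must be engineered so that its image in $\mathbb{Z}M$ is a provably non-free ideal, and Stafford's non-freeness argument is sensitive to this choice --- hence the paper's insistence on $r'=1+a^m$ (or $1+a^m+a^{3m}$) and second generator $x'+s'$. Two smaller inaccuracies: the inference ``$\gcd(m,n)=1$, so the abelianisation is infinite cyclic'' is wrong as reasoning, since $BS(m,n)^{\mathrm{ab}}\cong\mathbb{Z}\oplus\mathbb{Z}/(m-n)$, so it is $|n-m|=1$, not coprimality, that kills the torsion; and in the paper the hypothesis $n=m\pm 1$ enters exactly there (torsion-freeness of the abelianisation, needed for the quotient to satisfy the hypotheses of Theorem \ref{28}) and in showing $a\in BS(m,n)'$ via $[b,a^m]=a^{n-m}=a^{\pm 1}$, rather than through simplifying Bézout data or making an invariant computable, which is where you predicted it would matter.
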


We would like to note that in the above Theorem \ref{20}, $BS(m,n)$ is an aspherical group, and hence we can pose the following interesting question, which would generalise the results in \cite{MR2263062} over $BS(2,3)$.

\begin{question}
     Are the non-free stably-free $\mathbb{Z}BS(m,n)$-modules constructed in Theorem \ref{20} geometrically realisable?
\end{question}

In section \ref{gv} we explore a global view of Geometric realisation through filtrations of groups. In particular we prove the following obstruction to Theorem \ref{1} solving the Geometric realisation problem for general aspherical groups, which we restate equivalently using new notation as Theorem \ref{14}:

\begin{theorem}
    For $K$ the Klein bottle group, there is a stably-free $\mathbb{Z}K$-module that is not of the form of the modules in Theorem \ref{1}.
\end{theorem}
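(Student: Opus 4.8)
The plan is to pin down exactly which stably-free $\mathbb{Z}K$-modules can arise in the form of Theorem \ref{1} when $G=K$, and then produce a stably-free module lying outside this class. Write $\mathcal{C}$ for the collection of modules $\bigoplus_i \mathbb{Z}K\otimes_{\mathbb{Z}H_i}M_i$ with $H_i\leqslant K$ aspherical and $M_i$ a relation module. Since the case $H=K$ is permitted, $\mathcal{C}$ already contains every relation module over $\mathbb{Z}K$ together with all free modules, so the content of the statement is precisely that $\mathcal{C}\subsetneq SF(\mathbb{Z}K)$.

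First I would classify the aspherical subgroups of $K$. Because $K$ is torsion-free and metabelian (its commutator subgroup is the infinite cyclic $\langle a^2\rangle$, and it contains $\mathbb{Z}^2$ with index $2$), it has no non-abelian free subgroups, so every finitely generated subgroup is polycyclic of Hirsch length at most $2$; up to isomorphism the possibilities are $1$, $\mathbb{Z}$, $\mathbb{Z}^2$, or $K$ itself, the last necessarily of finite index. Over $\mathbb{Z}[\mathbb{Z}^n]$ every finitely generated projective module is free (Swan, via Bass--Heller--Swan), so stably-free --- in particular relation --- modules over $\mathbb{Z}$ and $\mathbb{Z}^2$ are free, and inducing them up to $\mathbb{Z}K$ yields free modules. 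This reduces $\mathcal{C}$, up to free summands, to direct sums of induced relation modules $\mathbb{Z}K\otimes_{\mathbb{Z}H_j}M_j$ from finite-index copies $H_j\cong K$. Since a relation module is a second syzygy of the trivial module, and induction is exact and carries free modules to free modules, every element of $\mathcal{C}$ is, up to stabilisation, a second syzygy over $\mathbb{Z}K$ of a direct sum of permutation modules $\bigoplus_j \mathbb{Z}[K/H_j]$.

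Next I would construct the witness. Exploiting $K=BS(1,-1)$ and the skew-Laurent description $\mathbb{Z}K\cong\mathbb{Z}[a^{\pm 1}][b^{\pm 1};\sigma]$ with $\sigma(a)=a^{-1}$, I would build a rank-one stably-free module $M$ as the cokernel of a unimodular row over $\mathbb{Z}K$ that is not completable, by the same kind of analysis that underlies the construction in Theorem \ref{20} (or by invoking known constructions of stably-free non-free modules over group rings of torsion-free metabelian groups). The non-freeness of $M$ should be detected by a Mennicke-type, or $K_1$-theoretic, invariant of the row over $\mathbb{Z}K$ itself; I expect it to be invisible to the natural commutative specialisations obtained by quotienting by the normal $\langle a\rangle$ or by abelianising, so that the obstruction is genuinely non-commutative.

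The hard part is showing that this $M$ does not lie in $\mathcal{C}$. Two routes seem viable. The cleanest would be to prove that every relation module over $\mathbb{Z}K$ is in fact free; then $\mathcal{C}$ collapses to the free modules and any stably-free non-free $M$ finishes the argument. Failing that, one must exhibit an invariant --- I anticipate one refining the class of $M$ in the noncommutative $K_1$ of $\mathbb{Z}K$ relative to its finite-index subrings --- that vanishes on every second syzygy of a permutation module $\bigoplus_j\mathbb{Z}[K/H_j]$ but is nonzero on $M$. Establishing that such an invariant is simultaneously computable on the explicit $M$ and provably trivial across the entire induced-relation-module class is the crux of the proof, the tension being that the commutative reductions which would make the invariant tractable are exactly the ones liable to annihilate it.
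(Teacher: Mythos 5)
Your skeleton matches the paper's: classify the subgroups of $K$, show that every module of the form in Theorem \ref{1} is then forced to be free, and exhibit a stably-free non-free module as witness. Your subgroup classification (only $1$, $\mathbb{Z}$, $\mathbb{Z}^2$, and finite-index Klein bottle groups occur) is correct, argued via polycyclicity rather than the paper's covering-space argument (Lemma \ref{12}), and disposing of the abelian subgroups via Swan/Bass--Heller--Swan is exactly what the paper does in the easy half of Lemma \ref{13}.

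The genuine gap is the step you yourself flag as unresolved: you never prove that relation modules over the Klein bottle subgroups (including $K$ itself) are free. Without this, the class $\mathcal{C}$ need not consist only of free modules, and then no choice of witness can finish the argument. Your ``cleanest route'' is in fact exactly the paper's route, but it rests on a substantive known theorem, not a routine verification: Louder's result that relation modules over such groups are free, with a simpler Klein-bottle-specific proof due to Harlander--Misseldine (this is the content of Lemma \ref{13}). Your fallback route --- a noncommutative $K_1$-type invariant that vanishes on all second syzygies of permutation modules $\bigoplus_j \mathbb{Z}[K/H_j]$ but not on the witness --- is entirely speculative; you neither construct it nor give evidence that it exists, and you concede it is the crux. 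So as written the proposal is not a proof. The secondary gap, existence of the witness, is easily patched: $K\cong\mathbb{Z}\rtimes\mathbb{Z}$ is a non-abelian poly-infinite cyclic group, so Stafford's theorem (Theorem \ref{18} of the paper) directly supplies a rank-one stably-free non-free $\mathbb{Z}K$-module; the paper instead cites Mannan's geometrically realisable example because its Theorem \ref{14} additionally asserts geometric realisability. Note also that your suggestion to imitate the construction behind Theorem \ref{20} does not literally apply here: the hypothesis of Theorem \ref{28} requires a torsion-free abelianisation, whereas $K=BS(1,-1)$ has abelianisation $\mathbb{Z}\oplus\mathbb{Z}/2$.
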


We would like to point the reader towards recent developments on the Geometric realisation problem (over aspherical groups) and related questions in the following papers. In Nicholson \cite{MR123}, the first examples of non-free stably-free $\mathbb{Z}G$-modules of arbitrary rank are constructed and also geometrically realised, resolving a problem from the 1979 problems list of C.T.C Wall. In Mannan \cite{MR456}, the first example of a non-trivial geometric realisation of a stably-free module over the Klein bottle group was constructed, building on the work of Harlander-Misseldine in \cite{MR2846158}.

\subsection*{Acknowledgements} I would primarily like to thank Johnny Nicholson, my supervisor for this project, for an enlightening introduction to this area of research, as well as many insightful discussions and much help and advice. I also would like to thank Jens Harlander for a very helpful correspondence and fellow student Lukas Ertl, also working under Johnny, for interesting conversation and insights. This work was completed as a UROP project at Imperial College London supported by an EPSRC vacation bursary, I would like to thank the university for making this possible.

\section{Preliminaries}

In this section we define two of the key objects of interest in this paper: relation modules and aspherical groups. We give a short overview of some relevant results in the literature on relation modules, and then see an equivalent formulation of the Geometric realisation problem for aspherical groups. We conclude by recalling a connection to Wall's $D2$ problem.

\subsection{Relation Modules}

Given a group presentation $P$ with generating set $\textit{\textbf{x}}$, we construct the presentation complex $K(P)$, a finite 2-complex, which has universal cover $\tilde{K}(P)$ which is also a 2-complex. The 1-skeleton $\tilde{K}(P)^{(1)}$ is called the Cayley graph $\Gamma(G,\textit{\textbf{x}})$ of $G$ on the generating set $\textit{\textbf{x}}$.

\begin{definition}\label{rm}
    The relation module associated with the pair $(G,\textbf{x})$ is the $\mathbb{Z}G$-module $M(G,\textbf{x}):=H_1(\Gamma(G,\textbf{x}))$, where the $\mathbb{Z}G$ action on homology is induced by the monodromy action of $G$ on the universal cover $\tilde{K}(P)$, see \cite[p3]{MR3752465}.
\end{definition}

Historically, relation modules have been of interest as they provided the first examples of non-free stably-free $\mathbb{Z}G$-modules in Berridge-Dunwoody \cite{MR540056} with $G$ the trefoil group. These are important as all free $\mathbb{Z}G$-modules are trivially geometrically realisable for $G$ aspherical, by taking $X_n=K\vee S^2\vee S^2...$ with $n$ copies of $S^2$, where $K$ is an aspherical 2-complex with fundamental group $G$, see Definition \ref{ad}. This space is a finite 2-complex and has $\pi_1(X_n)\cong G$ and $\pi_2(X_n)\cong (\mathbb{Z}G)^n$. As a result non-free stably-free $\mathbb{Z}G$-modules make the up all of the non-trivial cases of the geometric realisation problem when $G$ is aspherical by Proposition \ref{ga}.

The question of geometric realisation of the original modules constructed by Berridge-Dunwoody was open for over a decade, until corresponding finite 2-complexes were found by Preusser \cite{MR263}. She further generalised the approach of Berridge-Dunwoody to $T(2,2q+1)$ for any $q\in\mathbb{N}$ and $T(3,4)$, where $T(n,m)$. It was shown in this paper certain limitations of the methods in generalising to more complex torus knot groups. We have in fact managed to obtain a similar result over $T(10,15)$, a case not accessible to that paper.  

The general question of geometric realisation of relation modules over aspherical groups was answered in the affirmative in \cite[Corollary 4.3]{MR2263062}:

\begin{theorem}[Harlander-Jenson]
    If $G$ is aspherical and $M\in SF(\mathbb{Z}G)$ is a relation module, then $M$ is geometrically realisable.
\end{theorem}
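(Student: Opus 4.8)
The plan is to translate geometric realisability of $M$ into the problem of building a finite $2$-complex $X$ with $\pi_1(X)\cong G$ whose equivariant cellular chain complex satisfies $\pi_2(X)=\ker\partial_2\cong M$, and to use asphericity of $G$ to supply the free resolution that makes this possible. First I would fix the defining data of the relation module: writing $M=M(G,\mathbf{x})=H_1(\Gamma(G,\mathbf{x}))$, the Cayley graph gives the exact sequence \[0\to M\to (\mathbb{Z}G)^{g}\xrightarrow{\partial_1} I_G\to 0,\] where $g=|\mathbf{x}|$ and $I_G$ is the augmentation ideal. Since $G$ is aspherical, fix a finite aspherical presentation $\langle \mathbf{y}\mid \mathbf{s}\rangle$ with complex $L$; its universal cover is contractible, so it furnishes a genuine length-two free resolution of $\mathbb{Z}$ over $\mathbb{Z}G$. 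Comparing this resolution with the splice of the sequence above and $0\to I_G\to \mathbb{Z}G\to\mathbb{Z}\to 0$ via Schanuel's lemma recovers the hypothesis $M\in SF(\mathbb{Z}G)$: $M$ differs from the free relation module $M(G,\mathbf{y})$ only by free summands.

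Next I would construct $X$ from a presentation on the desired generating set. For \emph{any} finite presentation $P=\langle\mathbf{x}\mid\mathbf{r}\rangle$ of $G$ one always has $\operatorname{im}\partial_2^{P}=\ker\partial_1=M$, because the universal cover of the presentation complex $K_P$ is simply connected; hence \[0\to \pi_2(K_P)\to (\mathbb{Z}G)^{|\mathbf{r}|}\xrightarrow{\partial_2^{P}} M\to 0.\] The task is therefore to choose the relators $\mathbf{r}$ so that $\pi_2(K_P)\cong M$ rather than merely some other stabilised representative. The idea is to use the aspherical complex $L$ as a scaffold: realising $M$ naively (wedging copies of $S^2$ onto $L$) only produces $M\oplus(\mathbb{Z}G)^{n}$, so the real content is to \emph{de-stabilise}. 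I would do this by lifting a free presentation of $M$ to honest group relators through the contractibility of $\tilde L$, using the aspherical resolution to express the required syzygies as Fox derivatives of genuine words in $\mathbf{x}\cup\mathbf{y}$, and then performing Tietze--/Andrews--Curtis-style cancellations of the auxiliary $\mathbf{y}$-generators to return to a complex built on $\mathbf{x}$ whose second homotopy module is exactly $M$.

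The hard part is precisely this de-stabilisation, that is, bridging the gap between \emph{algebraic} and \emph{geometric} realisation: an abstract surjection of free modules onto $M$ with kernel $M$ need not arise from any attaching map, and a split extension would force $M\oplus M$ to be free, which a stably free $M$ need not satisfy. The leverage is that relation modules come from actual group elements, so the module maps in play are automatically induced by honest words, and that asphericity guarantees the relevant second syzygy is \emph{free} and is realised on the nose by $L$. I would therefore concentrate the proof on showing that the cancellation step can be carried out geometrically, and conclude by verifying directly from the chain complex of $\tilde X$ that $\pi_1(X)\cong G$ and $\pi_2(X)\cong M$ --- not merely up to free summands. The remaining checks (that $X$ is finite and that the boundary maps are genuine cellular maps) are routine once the central lifting/cancellation step is established.
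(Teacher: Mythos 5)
There is a genuine gap, and it sits exactly where you placed the weight of the proof. Your stated target is a presentation complex built on the \emph{same} generating set $\mathbf{x}$ that defines $M=M(G,\mathbf{x})$, with $\pi_2\cong M$. This target is obstructed, not merely hard. Since $M\in SF(\mathbb{Z}G)$ by hypothesis, $M$ is projective, so for \emph{any} finite presentation $P=\langle\mathbf{x}\mid\mathbf{r}\rangle$ the sequence $0\to\pi_2(K_P)\to(\mathbb{Z}G)^{|\mathbf{r}|}\to M(G,\mathbf{x})\to 0$ splits automatically: the distinction you draw between ``abstract'' extensions and those ``arising from attaching maps'' is illusory, because splitting is forced by projectivity of the quotient, no matter how the surjection arises. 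Hence $\pi_2(K_P)\cong M$ would give $M\oplus M\cong(\mathbb{Z}G)^{|\mathbf{r}|}$ free, a condition the hypotheses do not supply (and which is not available for, e.g., the Berridge--Dunwoody modules). Your own parenthetical concedes this point, but you then treat it as something ``honest words'' can evade; they cannot, for any choice of relators on $\mathbf{x}$. If instead you stop before cancelling, on the enlarged generating set $\mathbf{x}\cup\mathbf{y}$, the numerical obstruction disappears (adding redundant generators adds free summands to the relation module, and $M\oplus M\oplus(\mathbb{Z}G)^k$ is free for large $k$), but then the remaining step --- lifting a $\mathbb{Z}G$-module surjection of a free module onto the relation module, with kernel $M$, to a set of \emph{group relators that normally generate} the kernel of $F(\mathbf{x}\cup\mathbf{y})\to G$ --- is precisely the relation gap problem. ``Tietze/Andrews--Curtis-style cancellation'' is not a mechanism here; it is a restatement of the open difficulty, and it is the entire content of the theorem.

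For comparison, the paper does not reprove this statement: it cites Harlander--Jensen \cite[Corollary 4.3]{MR2263062}, and the shape of their construction is visible in the paper's proof of Theorem A, where the realising complex is $K=(L\cup(W\times[0,1])\cup L)^{(2)}$. Concretely: take an aspherical finite 2-complex $L$ with $\pi_1(L)\cong G$, let $W$ be a wedge of circles indexed by $\mathbf{x}$, mapped cellularly into each copy of $L$ by loops representing the generators, and form the double mapping cylinder. Van Kampen gives $\pi_1(K)\cong G\ast_{F(\mathbf{x})}G\cong G$ because both maps $F(\mathbf{x})\to G$ are the same surjection; a Mayer--Vietoris argument on the universal cover --- where each $L$-end lifts to a single contractible copy of $\tilde{L}$ and the middle slice lifts to a single copy of the Cayley graph $\Gamma(G,\mathbf{x})$ --- gives $\pi_2(K)\cong H_1(\Gamma(G,\mathbf{x}))=M$; and $K$ is a finite 2-complex since $W$ is a graph. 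The key structural difference from your plan is that the resulting presentation has a doubled generating set $\mathbf{z}$, so the splitting argument above only forces $M(G,\mathbf{x})\oplus M(G,\mathbf{z})$ to be free, which is exactly what the construction delivers; and the honest relators realising the right boundary map are produced by the geometry itself, rather than having to be conjured by cancellation. Your instinct that the whole content is a de-stabilisation is correct, but the resolution is to enlarge the generating set and change the geometry, not to cancel back down to $\mathbf{x}$.
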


To this day the vast majority of examples of finite 2-complexes with non-free $\pi_2$ and aspherical fundamental group have come from this source. Hence it seems likely that this is a good place to start in order to further the understanding of geometric realisation, and construct more examples. 

\subsection{The Geometric realisation problem and aspherical groups}\label{grp}
We will now try to understand the consequences of the restriction to `aspherical' groups:

\begin{definition}\label{ad}
    A finite 2-complex $K$ is called aspherical if $\pi_2(K)=0$, and a group $G$ is called aspherical if there is $K$ aspherical with $\pi_1(K)\cong G$. 
\end{definition}

This gives the following simplification of the Geometric realisation problem:

\begin{definition}
      Let $G$ be a group and $M\in SF(\mathbb{Z}G)$, we say that $M$ is geometrically realisable if there is a finite 2-complex $X$ such that $\pi_1(X)\cong G$ and $\pi_2(X)\cong M$. 
\end{definition}

\begin{proposition}\label{ga}
    For $G$ aspherical, the Geometric realisation problem is equivalent to whether every stably-free $\mathbb{Z}G$-module is geometrically realisable.
\end{proposition}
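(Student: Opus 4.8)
The plan is to prove the two implications separately, using throughout the identification $\pi_2(X)\cong H_2(\tilde X)=\ker(\partial_2\colon C_2(\tilde X)\to C_1(\tilde X))$, which holds because $\tilde X$ is simply connected and has no $3$-cells; thus the module $\pi_2(X)$ is exactly the degree-$2$ homology of the algebraic $(G,2)$-complex $C_\ast(\tilde X)$. Asphericity enters through the fact that an aspherical $2$-complex $K$ with $\pi_1(K)\cong G$ (Definition \ref{ad}) supplies a genuine free resolution $0\to C_2(\tilde K)\to C_1(\tilde K)\to C_0(\tilde K)\to \mathbb Z\to 0$ of length $2$.

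First I would treat the direction that the Geometric realisation problem implies that every stably-free module is realisable. Given $M\in SF(\mathbb ZG)$, I choose a split surjection $q\colon(\mathbb ZG)^t\twoheadrightarrow(\mathbb ZG)^s$ with $\ker q\cong M$, which exists since $M$ is stably free, and form the algebraic $(G,2)$-complex $A$ with $A_2=C_2(\tilde K)\oplus(\mathbb ZG)^t$, $A_1=C_1(\tilde K)\oplus(\mathbb ZG)^s$, $A_0=C_0(\tilde K)$, boundary $\partial_2=\partial_2^K\oplus q$, and $\partial_1,\epsilon$ those of $K$ extended by zero on the added summand. A direct check gives $H_0(A)=\mathbb Z$, $H_1(A)=0$, and $H_2(A)=\ker\partial_2\cong M$. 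Realising $A$ by a finite $2$-complex $X$ then yields $\pi_2(X)\cong H_2(C_\ast(\tilde X))\cong H_2(A)\cong M$.

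For the converse I would argue in three steps. (a) Given an arbitrary algebraic $(G,2)$-complex $A$, inserting $M:=\ker\partial_2$ produces the exact sequence $0\to M\to A_2\to A_1\to A_0\to\mathbb Z\to 0$; comparing it with the length-$2$ resolution from $K$ via the generalised Schanuel lemma shows $M\oplus(\text{free})\cong(\text{free})$, so $M=H_2(A)$ is stably free. (b) By hypothesis $M$ is geometrically realisable, giving a finite $2$-complex $Y$ with $\pi_1(Y)\cong G$ and $\pi_2(Y)\cong M$, so that $C_\ast(\tilde Y)$ is an algebraic $(G,2)$-complex with $H_2\cong M\cong H_2(A)$. (c) It then remains to upgrade the isomorphism $\pi_2(Y)\cong\pi_2(A)$ to a chain homotopy equivalence $C_\ast(\tilde Y)\simeq A$.

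The hard part is step (c), and this is exactly where asphericity is indispensable. The claim needed is that over an aspherical group the chain homotopy type of an algebraic $(G,2)$-complex is determined by its second homotopy module together with its Euler characteristic; the latter is forced to agree because $\mathrm{rank}_{\mathbb ZG}M$ is pinned down by $M$ and by the alternating sum of the ranks of the $A_i$. To establish this I would build a chain map $A\to C_\ast(\tilde Y)$ lifting the identity on $\mathbb Z$ and a chosen isomorphism on $\pi_2$, defining it degree by degree using freeness of the modules together with the exactness in degrees $0$ and $1$; the obstruction to promoting such a map to a homotopy equivalence lives in a cohomology group that vanishes precisely because the length-$2$ resolution forces $G$ to have cohomological dimension at most $2$. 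Finally, any discrepancy between the individual cell numbers of $Y$ and $A$ can be absorbed by elementary expansions of $Y$, which leave $\pi_2(Y)$ unchanged and alter $C_\ast(\tilde Y)$ only within its chain homotopy class; this exhibits $A$ as geometrically realisable and completes the equivalence.
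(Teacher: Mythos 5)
Your proposal is correct and follows essentially the same route as the paper: both hinge on (i) every stably free module arising as $H_2$ of an algebraic $(G,2)$-complex, (ii) $H_2$ of any algebraic $(G,2)$-complex being stably free via Schanuel/syzygies, and (iii) the fact that for $G$ aspherical (so $H^3(G,M)=0$ for all $M$) the chain homotopy type of an algebraic $(G,2)$-complex is determined by its $H_2$. The only difference is one of packaging: where the paper cites Johnson's Proposition 8.18 for (i) and deduces (iii) from the MacLane--Whitehead theorem via Corollary \ref{che}, you inline an explicit stabilised chain complex for (i) and an obstruction-theoretic sketch for (iii).
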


We will first recall the following result from \cite{MR33519} and establish some corollaries:

\begin{lemma}[Maclane-Whitehead]\label{M}
Algebraic $(G,2)$-complexes are chain homotopy equivalent if and only if their algebraic 2-types are isomorphic.
    \end{lemma}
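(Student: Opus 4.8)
The plan is to recast the algebraic $2$-type of an algebraic $(G,2)$-complex $A=(A_2\xrightarrow{\partial_2} A_1\xrightarrow{\partial_1} A_0\xrightarrow{\epsilon}\mathbb{Z}\to 0)$ in purely homological terms and then prove the equivalence by a comparison-of-resolutions argument. Writing $\pi_2(A):=\ker\partial_2$, which by exactness is the second homology of the truncated complex $A_2\to A_1\to A_0$ (which has $H_0\cong\mathbb{Z}$ via $\epsilon$ and $H_1=0$), I would take the algebraic $2$-type to consist of the group $G$, the $\mathbb{Z}G$-module $\pi_2(A)$, and the \emph{$k$-invariant}: the class in $\mathrm{Ext}^3_{\mathbb{Z}G}(\mathbb{Z},\pi_2(A))=H^3(G;\pi_2(A))$ represented, in the sense of Yoneda, by the $4$-term exact sequence $0\to\pi_2(A)\to A_2\to A_1\to A_0\xrightarrow{\epsilon}\mathbb{Z}\to 0$. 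An isomorphism of algebraic $2$-types is then an automorphism $\alpha\in\mathrm{Aut}(G)$ together with a compatible module isomorphism $\phi\colon\pi_2(A)\to\pi_2(A')$ carrying one $k$-invariant to the other.

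For the forward direction I would argue by naturality. A chain homotopy equivalence $f\colon A\to A'$ (over some $\alpha\in\mathrm{Aut}(G)$) induces isomorphisms on homology, hence an isomorphism $\phi=f_*\colon\pi_2(A)\to\pi_2(A')$ together with the identity on $H_0\cong\mathbb{Z}$; assembling $f$ with $f_*$ produces a morphism of the two $4$-term extensions, so by functoriality of the Yoneda class $\phi$ sends the $k$-invariant of $A$ to that of $A'$. Thus the algebraic $2$-types are isomorphic, and this direction is routine.

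The substance is the converse. After twisting by $\alpha$ (restricting scalars along the automorphism) I may assume $\alpha=\mathrm{id}_G$, so I am handed a module isomorphism $\phi\colon\pi_2(A)\to\pi_2(A')$ with $\phi_*k=k'$. First I would build a chain map $f\colon A\to A'$ over $\mathrm{id}_\mathbb{Z}$ by lifting degree by degree: using that each $A_i$ is free, hence projective, and that $\epsilon',\partial_1',\partial_2'$ surject onto the relevant kernels, one lifts $\mathrm{id}_\mathbb{Z}$ successively to $f_0$, $f_1$, $f_2$ with no obstruction. Any such $f$ induces some map $f_*\colon\pi_2(A)\to\pi_2(A')$ satisfying $(f_*)_*k=k'$. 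The key point is to upgrade this to a chain map inducing exactly the prescribed $\phi$: the Yoneda interpretation of $\mathrm{Ext}^3$ identifies the equality $\phi_*k=k'$ with the existence of a morphism of the two $4$-term extensions restricting to $\phi$ on the kernel terms, and by projectivity of the $A_i$ this morphism is realised by an honest chain map $f$ with $f_*=\phi$. Finally, since $f$ then induces isomorphisms on $H_0$, $H_1$ and $H_2$, it is a quasi-isomorphism between bounded complexes of projective $\mathbb{Z}G$-modules; its mapping cone is an acyclic bounded complex of projectives and is therefore contractible, so $f$ is a chain homotopy equivalence.

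I expect the main obstacle to be the middle step of the converse: matching the \emph{prescribed} isomorphism $\phi$ rather than merely some induced map. This is exactly the obstruction-theoretic content of the theorem and requires care in the Yoneda--$\mathrm{Ext}$ bookkeeping, namely showing that the set of maps on $\pi_2$ realised by chain maps over $\mathrm{id}$ is precisely the coset of $\mathrm{Hom}_{\mathbb{Z}G}(\pi_2(A),\pi_2(A'))$ pinned down by the $k$-invariants, together with correctly tracking the $\mathrm{Aut}(G)$-twist when $\alpha\neq\mathrm{id}$. Once this identification is in place, the remaining steps---existence of lifts and the cone argument---are formal.
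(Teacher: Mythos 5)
Your proposal is correct in outline, but note that the paper does not actually prove this lemma at all: it is imported verbatim as a citation to MacLane--Whitehead \cite{MR33519} (with the algebraic formulation as used, e.g., in \cite{MR3752465} and \cite{MR2012779}), so there is no in-paper argument to match. What you have written is the standard modern homological proof of that classical result: encode the algebraic $2$-type as $(G,\pi_2(A),k)$ with $k\in H^3(G;\pi_2(A))\cong\mathrm{Ext}^3_{\mathbb{Z}G}(\mathbb{Z},\pi_2(A))$ the Yoneda class of $0\to\pi_2(A)\to A_2\to A_1\to A_0\to\mathbb{Z}\to 0$, get the forward direction by naturality, and get the converse by lifting $\mathrm{id}_{\mathbb{Z}}$ through projectives and then correcting the induced map on $\pi_2$ to the prescribed $\phi$, finishing with the mapping-cone argument (an acyclic bounded complex of projectives is contractible). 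The one point you rightly flag as delicate deserves emphasis: equality of Yoneda classes in $\mathrm{Ext}^3$ does \emph{not} in general yield a direct morphism of four-term extensions (only a zigzag); it is precisely the projectivity of the $A_i$ that rescues this, via the identification $\mathrm{Ext}^3_{\mathbb{Z}G}(\mathbb{Z},\pi_2(A'))\cong\mathrm{Hom}_{\mathbb{Z}G}(\pi_2(A),\pi_2(A'))/\{\text{maps extending over }A_2\}$, under which $\phi$ and the induced map $f_\ast$ of any lift $f$ represent the same class, so $\phi-f_\ast$ extends to a map $A_2\to\pi_2(A')$ and one adjusts $f_2$ by its composite with the inclusion $\pi_2(A')\hookrightarrow A_2'$ without disturbing the chain-map condition (since $\partial_2'$ kills $\pi_2(A')$). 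With that coset argument spelled out, and the $\mathrm{Aut}(G)$-twist handled by restriction of scalars as you indicate, your sketch closes the gap the paper leaves to the literature; what the citation buys instead is brevity and deference to the original topological statement about $3$-types of complexes, whereas your route gives a self-contained purely algebraic proof adapted exactly to the paper's definition of algebraic $(G,2)$-complexes.
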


A direct application of this is seen in \cite[Corollary 1.9]{MR3752465}:

\begin{corollary}\label{che}
    Let $A_1$ and $A_2$ be two algebraic $(G,2)$-complexes. Assume that $H^3(G,M)=0$ for all $\mathbb{Z}G$-modules $M$. Then $A_1$ and $A_2$ are chain homotopy equivalent if and only if $H_2(A_1)$ and $H_2(A_2)$ are isomorphic.
\end{corollary}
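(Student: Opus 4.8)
The plan is to deduce this directly from the Maclane--Whitehead Lemma \ref{M}, by tracking how the hypothesis $H^3(G,M)=0$ collapses the data of the algebraic $2$-type down to the single invariant $H_2(A)$. Recall that the algebraic $2$-type of an algebraic $(G,2)$-complex $A$ consists of the group $G$, the $\mathbb{Z}G$-module $\pi_2(A):=H_2(A)=\ker\partial_2$, and a $k$-invariant $k_A\in H^3(G,\pi_2(A))\cong\operatorname{Ext}^3_{\mathbb{Z}G}(\mathbb{Z},\pi_2(A))$ recording the Yoneda class of the extension
\[0\to \pi_2(A)\to A_2\xrightarrow{\partial_2} A_1\xrightarrow{\partial_1} A_0\xrightarrow{\epsilon}\mathbb{Z}\to 0\]
obtained from the (otherwise exact) chain complex. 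Over the fixed group $G$, an isomorphism of algebraic $2$-types is precisely a $\mathbb{Z}G$-module isomorphism $\psi\colon\pi_2(A_1)\xrightarrow{\sim}\pi_2(A_2)$ satisfying $\psi_\ast k_{A_1}=k_{A_2}$ (under the identity on $G$).

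First I would dispatch the easy ($\Rightarrow$) direction, which needs no cohomological hypothesis: a chain homotopy equivalence $A_1\simeq A_2$ induces an isomorphism on homology in each degree, hence in particular a $\mathbb{Z}G$-module isomorphism $H_2(A_1)\cong H_2(A_2)$.

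For the ($\Leftarrow$) direction I would bring in the hypothesis. Applying $H^3(G,M)=0$ with $M=\pi_2(A_1)$ and $M=\pi_2(A_2)$ forces both $k$-invariants to vanish, so $k_{A_1}=0$ and $k_{A_2}=0$. Consequently, given any $\mathbb{Z}G$-module isomorphism $\psi\colon H_2(A_1)\xrightarrow{\sim}H_2(A_2)$, the compatibility condition $\psi_\ast k_{A_1}=k_{A_2}$ is satisfied automatically, both sides being zero. Thus $\psi$ upgrades to an isomorphism of algebraic $2$-types, and Lemma \ref{M} then delivers a chain homotopy equivalence $A_1\simeq A_2$.

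The only genuinely delicate point is the bookkeeping of the algebraic $2$-type and its morphisms: one must verify that an isomorphism of $2$-types fixing $G$ reduces exactly to the pair (module isomorphism, agreement of $k$-invariants), so that killing the $k$-invariants leaves precisely the module condition. I expect this identification to be the main obstacle, though a modest one; everything else is formal once the vanishing of $H^3(G,-)$ is fed into the classification provided by Lemma \ref{M}.
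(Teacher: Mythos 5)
Your proposal is correct and takes essentially the same route as the paper: the paper treats this statement as a direct application of Lemma \ref{M}, deferring the details to \cite[Corollary 1.9]{MR3752465}, and those details are exactly your argument (the forward direction is formal, while the hypothesis $H^3(G,M)=0$ kills both $k$-invariants so that an isomorphism $H_2(A_1)\cong H_2(A_2)$ of $\mathbb{Z}G$-modules automatically upgrades to an isomorphism of algebraic $2$-types). You have simply written out the bookkeeping that the paper delegates to the citation.
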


\begin{corollary}
    Let $G$ be an aspherical group and let $A$ be an algebraic $(G,2)$-complex with $H_2(A)\cong M$. If there is a finite 2-complex $X$ such that $\pi_2(X)\cong M$, then $A$ is geometrically realisable by $X$.
\end{corollary}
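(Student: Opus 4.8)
The plan is to realise $A$ by the given $X$ through a direct comparison of the two algebraic $(G,2)$-complexes $A$ and $C_\ast(\tilde X)$ at the level of second homology, invoking Corollary \ref{che}. Concretely, I would show that $C_\ast(\tilde X)$ is itself an algebraic $(G,2)$-complex with $H_2(C_\ast(\tilde X))\cong M\cong H_2(A)$, verify that the standing hypothesis $H^3(G,N)=0$ of Corollary \ref{che} holds for aspherical $G$, and then read off a chain homotopy equivalence $A\simeq C_\ast(\tilde X)$. By the definition of geometric realisability of algebraic $(G,2)$-complexes, this is exactly the assertion that $A$ is realised by $X$.

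First I would record that, since $X$ is a finite $2$-complex with $\pi_1(X)\cong G$, the cellular chain complex $C_\ast(\tilde X)$ of its universal cover consists of finitely generated free $\mathbb{Z}G$-modules and is exact in degrees $0$ and $1$ and at $\mathbb{Z}$; that is, it is an algebraic $(G,2)$-complex. Its second homology is then computed by noting that $\tilde X$ is a simply connected $2$-complex with no $3$-cells, so $H_2(C_\ast(\tilde X))=\ker\partial_2=H_2(\tilde X)$. By the Hurewicz theorem $H_2(\tilde X)\cong\pi_2(\tilde X)$, and since the covering projection $\tilde X\to X$ induces isomorphisms on $\pi_n$ for $n\geqslant 2$, we obtain $H_2(C_\ast(\tilde X))\cong\pi_2(X)\cong M$, matching $H_2(A)$ as $\mathbb{Z}G$-modules.

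The remaining input is the cohomological hypothesis of Corollary \ref{che}, and this is where asphericity enters and where I expect the only genuinely substantive step to lie. Since $G$ is aspherical there is a finite $2$-complex $K$ with $\pi_1(K)\cong G$ and $\pi_2(K)=0$. Its universal cover $\tilde K$ is a simply connected $2$-complex with $\pi_2(\tilde K)=\pi_2(K)=0$, hence, by Hurewicz together with the absence of cells above dimension $2$, has vanishing reduced homology and is therefore contractible. Thus $K$ is a two-dimensional $K(G,1)$, so $G$ has cohomological dimension at most $2$ and consequently $H^3(G,N)=0$ for every $\mathbb{Z}G$-module $N$. The rest of the argument is a formal comparison, so care is really only needed in this passage from the geometric notion of asphericity to the vanishing of $H^3$.

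Finally, with both $A$ and $C_\ast(\tilde X)$ algebraic $(G,2)$-complexes whose second homology is isomorphic to $M$, and with $H^3(G,-)$ vanishing, Corollary \ref{che} yields a chain homotopy equivalence $A\simeq C_\ast(\tilde X)$. Since moreover $\pi_1(X)\cong G$, this is precisely the statement that $A$ is geometrically realisable, and realised by the given complex $X$, completing the argument.
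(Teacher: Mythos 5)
Your proposal is correct and follows essentially the same route as the paper: establish $H^3(G,-)=0$ from asphericity, identify $H_2(C_\ast(\tilde X))\cong\pi_2(X)\cong M\cong H_2(A)$ via Hurewicz, and conclude a chain homotopy equivalence $A\simeq C_\ast(\tilde X)$ by Corollary \ref{che}. The only differences are cosmetic: you spell out the ``standard result'' that a finite aspherical $2$-complex is a two-dimensional $K(G,1)$ (hence $\mathrm{cd}(G)\leqslant 2$), which the paper simply cites, and you invoke Corollary \ref{che} where the paper nominally cites Lemma \ref{M}, though its argument in fact runs through the hypothesis of Corollary \ref{che} exactly as yours does.
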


\begin{proof}
    By standard result, $G$ aspherical implies $H^3(G,M)=0$ for all $\mathbb{Z}G$-modules $M$. Hence it suffices to see that by Hurewicz's theorem, $H_2(\tilde{X})\cong \pi_2(\tilde{X})\cong\pi_2(X)\cong H_2(A)$, so by Lemma \ref{M} the cellular chain complex $C_\ast(\tilde{X})$ is chain homotopy equivalent to $A$, as $H_2(C_\ast(\tilde{X}))=H_2(\tilde{X})\cong H_2(A)$.
\end{proof}

\begin{proof}[Proof of Proposition \ref{ga}]

By results on syzygies, if $X$ is a finite 2-complex with $\pi_1(X)\cong G$ aspherical, we have that $\pi_2(X)=M$ is a stably-free $\mathbb{Z}G$-module. It suffices to see that every stably-free $\mathbb{Z}G$-module can be realised as $H_2(A)$ for some algebraic 2-complex $A$, which is proven in \cite[Proposition 8.18]{MR2012779}.
\end{proof}

The Geometric realisation problem is very important for a large part due to it's connection with a famous problem due to Wall \cite{MR171284}:

\begin{d2} Let $X$ be a connected finite $3$-complex of cohomological dimension $2$ with $\pi_1(X)\cong G$ and $H_3(\tilde{X},\mathbb{Z})=0$. Is $X$ homotopy equivalent to a finite $2$-complex?
\end{d2}

By the work of Johnson \cite{MR2012779} and Mannan\cite{MR2496351}, we have the following result:

\begin{theorem}[D2-problem and Geometric realisation] For $G$ a finitely presented group, the $D2$-problem and the Geometric realisation problem are equivalent.

\end{theorem}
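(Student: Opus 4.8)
The plan is to prove the two implications separately, using a cut-off at the top dimension in one direction and a realisation-in-dimension-three argument in the other, with Lemma \ref{M} (Maclane--Whitehead) as the bridge between the algebraic and geometric settings throughout. Both ingredients are supplied by the machinery of Johnson \cite{MR2012779} and Mannan \cite{MR2496351}, which I would assemble as follows.

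First I would treat the implication that geometric realisation implies the D2 property. Let $X$ be a finite $3$-complex of cohomological dimension $2$ with $\pi_1(X)\cong G$ and $H_3(\tilde X)=0$, and consider the chain complex $C_\ast(\tilde X)$ of finitely generated free $\mathbb{Z}G$-modules
\[0\to C_3\xrightarrow{\partial_3}C_2\xrightarrow{\partial_2}C_1\xrightarrow{\partial_1}C_0\xrightarrow{\epsilon}\mathbb{Z}\to 0.\]
Since $H_3(\tilde X)=0$ the boundary $\partial_3$ is injective, and since $X$ has cohomological dimension $2$ we have $H^3(X;B)=\operatorname{coker}(\partial_3^\ast)=0$ for every coefficient module $B$; taking $B=C_3$ exhibits the identity of $C_3$ in the image of $\partial_3^\ast$, hence a retraction $r\colon C_2\to C_3$ with $r\partial_3=\mathrm{id}$, so $\partial_3$ is a split injection and $C_2\cong \partial_3(C_3)\oplus P$ with $P$ stably free. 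The acyclic summand $C_3\xrightarrow{\cong}\partial_3(C_3)$ splits off, leaving $C_\ast(\tilde X)$ chain homotopy equivalent to the algebraic $(G,2)$-complex $A=(P\to C_1\to C_0\to\mathbb{Z}\to 0)$. Stabilising $A$ by a free module (harmless, since this corresponds geometrically to wedging copies of $S^2$ and clears the stably-free-versus-free discrepancy in $P$) produces a genuine algebraic $(G,2)$-complex, which the hypothesis realises by a finite $2$-complex $K$. By Lemma \ref{M}, $K$ and $X$ share the same algebraic $2$-type, and since both have cohomological dimension $\le 2$ this invariant is complete, so $X\simeq K$ is homotopy equivalent to a finite $2$-complex.

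Next I would treat the converse, that the D2 property implies geometric realisation. Let $A$ be an arbitrary algebraic $(G,2)$-complex. The key step is to realise $A$ up to chain homotopy by an honest space: starting from a presentation complex of the finitely presented group $G$ and comparing the second syzygy $H_2(A)$ with the $\pi_2$ of that presentation complex via stable freeness, one attaches finitely many $2$- and $3$-cells to build a finite $3$-complex $Y$ with $\pi_1(Y)\cong G$, $H_3(\tilde Y)=0$ and $C_\ast(\tilde Y)\simeq A$. Such a $Y$ automatically has cohomological dimension $2$, so it is a D2 complex; this is precisely the realisation of algebraic $2$-complexes in dimension $3$ carried out in Mannan \cite{MR2496351}. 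Applying the D2 property to $Y$ yields a finite $2$-complex $K\simeq Y$, and then $C_\ast(\tilde K)\simeq C_\ast(\tilde Y)\simeq A$, so $A$ is geometrically realisable.

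The main obstacle is the realisation-by-a-$3$-complex step in the converse: an algebraic $(G,2)$-complex is a purely module-theoretic object, and the content here is that the extra freedom of a third dimension always lets one model it by a genuine CW complex, reducing the geometric realisation problem to a statement about $2$-complexes alone. The supporting subtlety in the first direction is the upgrade from a chain homotopy equivalence to a homotopy equivalence of spaces: this is exactly where cohomological dimension $2$ is essential, since it guarantees the absence of higher $k$-invariants and hence that the algebraic $2$-type of Lemma \ref{M} is a complete homotopy invariant. Johnson's work \cite{MR2012779} provides both the cut-off argument and this Maclane--Whitehead-type uniqueness, while Mannan's work \cite{MR2496351} provides the dimension-three realisation, so combining these two inputs closes the equivalence.
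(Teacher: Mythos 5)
Your proof is correct and follows the same route as the paper, which gives no argument of its own but simply attributes the theorem to Johnson \cite{MR2012779} and Mannan \cite{MR2496351}: your forward direction (split off $C_3$ using $H^3(X;C_3)=0$, pass to an algebraic $(G,2)$-complex, realise it by a finite $2$-complex, and conclude via completeness of the algebraic $2$-type in cohomological dimension $2$) is Johnson's contribution, and your converse (realise any algebraic $(G,2)$-complex by a finite $3$-complex $Y$ with $H_3(\tilde{Y})=0$ and cohomological dimension $2$, then apply the D2 property) is exactly Mannan's. One parenthetical slip worth fixing: the stabilisation that makes the stably free module $P$ free is the direct sum with an elementary complex $Q\xrightarrow{\mathrm{id}}Q$ in degrees $2$ and $1$, not a wedge with copies of $S^2$, since the latter adds a free summand to $H_2$ and therefore changes the chain homotopy type.
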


\section{Construction of finite 3-complex $K$}

We initially follow closely the methods from the proof of \cite[Theorem 4.2]{MR2263062} to construct a finite 3-complex with certain algebraic properties. In the next section we use a result from homotopy theory to give general conditions under which this 3-complex will in fact be homotopy equivalent to a finite 2-complex. These results will be combined later to prove Theorems \ref{1} and \ref{2}. We first establish some topological prerequisites:

 \begin{lemma}\label{50}
     If $(X,A)$ is a CW pair, $Y$ is a CW-complex and $f:A\to Y$ is a cellular map, then the gluing $X\sqcup_f Y$ is a CW-complex
 \end{lemma}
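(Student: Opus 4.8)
The plan is to exhibit an explicit CW structure on $Z := X \sqcup_f Y$ and verify the CW axioms by induction on skeleta, treating $Z$ throughout as the pushout (adjunction space) of $X \xleftarrow{\iota} A \xrightarrow{f} Y$ in $\mathbf{Top}$, with quotient map $q : X \sqcup Y \to Z$. The candidate cells are the open cells of $Y$ together with the open cells of $X$ that do not lie in $A$: since every point of $A$ is identified with its image under $f$, the open cells of $A$ are absorbed into $Y$ and contribute no cells of their own. First I would define the filtration $Z^{(n)} := q\big(X^{(n)} \sqcup Y^{(n)}\big)$; this is well-posed precisely because $f$ is cellular, so that $f(A^{(n)}) \subseteq Y^{(n)}$ and hence $Z^{(n)}$ is the pushout of $X^{(n)} \xleftarrow{} A^{(n)} \xrightarrow{f} Y^{(n)}$, where $A^{(n)} = A \cap X^{(n)}$ is the $n$-skeleton of the subcomplex $A$.

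The core step is to show that $Z^{(n)}$ is obtained from $Z^{(n-1)}$ by attaching the $n$-cells of $Y$ and the $n$-cells of $X \setminus A$. I would establish this by the pasting lemma for pushouts. Writing $X' := X^{(n-1)} \cup (n\text{-cells of } A)$, the fact that $A$ is a subcomplex gives $X' = X^{(n-1)} \cup_{A^{(n-1)}} A^{(n)}$, and pasting the pushout square for this against the gluing along $A^{(n)}$ identifies $\mathrm{pushout}(X' \leftarrow A^{(n)} \xrightarrow{f} Y^{(n)})$ with $Z^{(n-1)} \cup (n\text{-cells of } Y)$. Attaching the remaining cells of $X^{(n)}$, namely those of $X \setminus A$ (whose attaching maps $S^{n-1} \to X^{(n-1)} \xrightarrow{q} Z^{(n-1)}$ land in $Z^{(n-1)}$ and whose interiors miss $A$), then yields $Z^{(n)}$. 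Running this from $Z^{(0)}$, which is the discrete set $q(X^{(0)} \sqcup Y^{(0)})$, produces the desired cell structure; closure-finiteness of each cell is inherited from $X$ and $Y$, since $q$ does not enlarge the (finite) set of cells met by any closed cell.

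The step I expect to be the main obstacle is verifying that the quotient topology on $Z$ coincides with the weak topology determined by these cells, i.e. that $Z = \operatorname{colim}_n Z^{(n)}$ as topological spaces. I would handle this by the interchange of colimits: since $X = \operatorname{colim}_n X^{(n)}$ and $Y = \operatorname{colim}_n Y^{(n)}$ carry the weak topology, and the pushout defining $Z$ is itself a colimit, commuting the two colimits gives $Z = \operatorname{colim}_n \mathrm{pushout}(X^{(n)} \leftarrow A^{(n)} \to Y^{(n)}) = \operatorname{colim}_n Z^{(n)}$. The point requiring care is that $\iota : A \hookrightarrow X$ is a closed inclusion (as $(X,A)$ is a CW pair), which guarantees that the image of $Y$ is closed in $Z$ and that $X \setminus A$ embeds as an open subspace; these facts ensure the interchanged colimit has the expected underlying set and topology, completing the verification of the CW axioms.
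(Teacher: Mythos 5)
Your proposal is correct and takes essentially the same approach as the paper: the paper's proof simply declares the cells of $X\sqcup_f Y$ to be the cells of $X$ not in $A$ together with the cells of $Y$ and leaves the verification to the reader. Your skeletal filtration, pushout-pasting, and colimit-interchange arguments are precisely the "one can check" details the paper omits, so you have supplied a fuller version of the same proof rather than a different one.
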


 \begin{proof}

    To see this we take the cells of $X\sqcup_f Y$ to be the union of cells of $X$ not in $A$ and the cells of $Y$, and one can check that this gives a valid CW-complex structure.
 \end{proof}

\begin{definition}
    Given topological spaces $X, Y, Z$ and continuous maps $f:Z\to X$, $g:Z\to Y$, we define the double mapping cylinder $M(f,g)=X\sqcup_f (Z\times[0,1])\sqcup_g Y$. We write $X\cup (Z\times [0,1])\cup Y$ when the maps are implicit. The double mapping cylinder fits into a diagram:
 \[ 
    \begin{tikzcd}
Z \arrow[r, "f"] \arrow[d, "g"'] & Y \arrow[d, "i_Y",hook'] \\
X \arrow[r, "i_X"',hook]              & M(f,g)                 
\end{tikzcd}
\] which commutes up to homotopy equivalence, called the homotopy pushout of $f$, $g$. The maps $i_Y$, $i_Z$ are the natural inclusion maps, and will be omitted in future examples.
\end{definition} 

\begin{lemma}\label{dmc}
    Double mapping cylinders exist in the category of CW-complexes with cellular maps
\end{lemma}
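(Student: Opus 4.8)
The plan is to realise the double mapping cylinder as a single gluing of exactly the type handled by Lemma \ref{50}, so that its conclusion applies directly. First I would equip the unit interval $[0,1]$ with its standard CW-structure, consisting of two $0$-cells $\{0\},\{1\}$ and a single open $1$-cell $(0,1)$, and then form the product CW-structure on $Z\times[0,1]$. Since $[0,1]$ is compact, hence locally compact, the product topology coincides with the weak (CW) topology for the product cell structure, so $Z\times[0,1]$ is genuinely a CW-complex. Its cells are the products $e\times\sigma$ for $e$ a cell of $Z$ and $\sigma$ a cell of $[0,1]$, and in particular $Z\times\{0\}$ and $Z\times\{1\}$ are subcomplexes; thus $(Z\times[0,1],\,Z\times\{0,1\})$ is a CW pair.

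Next I would assemble the two attaching maps into one. The disjoint union $X\sqcup Y$ is a CW-complex, and I define $\varphi\colon Z\times\{0,1\}\to X\sqcup Y$ by setting $\varphi|_{Z\times\{0\}}=f$ (landing in $X$) and $\varphi|_{Z\times\{1\}}=g$ (landing in $Y$), under the canonical identifications $Z\times\{0\}\cong Z\cong Z\times\{1\}$. Because $f$ and $g$ are cellular they carry $n$-cells into $n$-skeleta, and the product structure identifies the cells of $Z\times\{0\}$ and $Z\times\{1\}$ with those of $Z$; hence $\varphi$ is cellular. By the very definition of the double mapping cylinder we have $M(f,g)=(Z\times[0,1])\sqcup_\varphi(X\sqcup Y)$, so applying Lemma \ref{50} to the CW pair $(Z\times[0,1],\,Z\times\{0,1\})$, the target CW-complex $X\sqcup Y$, and the cellular map $\varphi$ yields the desired CW-structure on $M(f,g)$, whose cells are the interior cells $e\times(0,1)$ together with all cells of $X$ and of $Y$.

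The only genuinely delicate point is the assertion that $Z\times[0,1]$ carries a CW-structure compatible with its product topology; this is exactly where the local compactness of $[0,1]$ is essential, since for a product of two arbitrary CW-complexes the product topology may fail to agree with the weak topology. Everything else is bookkeeping: the cellularity of $\varphi$ is immediate from that of $f$ and $g$, and the resulting cell structure is precisely the one furnished by Lemma \ref{50}, so I expect no further obstruction once this single topological subtlety is handled.
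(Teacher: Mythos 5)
Your proof is correct and takes essentially the same approach as the paper: the paper's proof is precisely to apply the gluing Lemma \ref{50} to each end of $Z\times[0,1]$, and you do the same, merely packaging the two gluings as a single cellular gluing of the subcomplex $Z\times\{0,1\}$ into the disjoint union $X\sqcup Y$. Your explicit verification that $Z\times[0,1]$ carries the product CW-structure (using local compactness of $[0,1]$) fills in a point the paper leaves implicit, but it is the same argument.
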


\begin{proof}
    Apply the proof of lemma \ref{35} to each end of $Z\times[0,1]$.
\end{proof}

Let $H\leqslant G$ be aspherical groups. We first consider the case that we are given $K_H$ a finite 2-complex with $\pi_1(K_H)\cong H$, where $\pi_2(K_H)\in SF(\mathbb{Z}H)$ is a stably-free $\mathbb{Z}H$-module by results on syzgyies. We will show how to realise $\mathbb{Z}G\otimes_{\mathbb{Z}H} \pi_2(K_H)\in SF(\mathbb{Z}G)$ as the second homotopy group of a finite 3-complex with fundamental group $G$ in a natural way.

Let $L$ be an arbitrary aspherical 2-complexes with fundamental group $H$. It is easy to see that there is a cellular map $f:L\to K_{H}$ induced by the identity map of groups $H\to H$, for example by considering that the obstruction is trivial as $L$ is a  2-complex. Let $J$ be the aspherical 2-complex with fundamental group $G$, then likewise there is a cellular map $g: L\to J$ induced from the group homomorphism $H\hookrightarrow G$.

We form the following double mapping cylinder:

$$K_{G}:=K_{H}\cup (L\times [0,1])\cup J$$ where the gluing maps are the $f$ and $g$ described above. This fits into the following homotopy commutative diagram:

\[\begin{tikzcd}
L \arrow[r, "f"] \arrow[d, "g"'] & K_{H} \arrow[d,hook'] \\
J \arrow[r,hook]                            & {K_{G}}   
\end{tikzcd}\]

\begin{lemma}\label{5} $K_{G}$ is a finite 3-complexes with the following algebraic characteristics:

     i) $\pi_1(K_{G})\cong G$

     ii) $\pi_2(K_{G})\cong \mathbb{Z}G\otimes_{\mathbb{Z}H}\pi_2(K_{H})$ 
\end{lemma}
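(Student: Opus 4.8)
The plan is to use the homotopy-pushout structure of $K_G$ twice: once via Seifert--van Kampen to compute $\pi_1$, and once via an equivariant Mayer--Vietoris argument on the universal cover to compute $\pi_2$. I would first record that $K_G$ is a finite $3$-complex. Since $K_H$, $L$ and $J$ are finite $2$-complexes, $L\times[0,1]$ is a finite $3$-complex, and gluing finite CW-complexes along cellular maps again yields a finite CW-complex by Lemmas \ref{50} and \ref{dmc}; the only cells of dimension $3$ arise from $L\times[0,1]$. For (i), the double mapping cylinder is the homotopy pushout of $f$ and $g$, so Seifert--van Kampen gives $\pi_1(K_G)\cong\pi_1(K_H)*_{\pi_1(L)}\pi_1(J)$ amalgamated along $f_\ast$ and $g_\ast$. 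Here $f_\ast\colon H\to H$ is the identity and $g_\ast\colon H\hookrightarrow G$ is the inclusion, so the amalgam is $H*_H G$. Because one structure map out of the amalgamated subgroup is an isomorphism, the pushout collapses and $\pi_1(K_G)\cong G$; moreover under this identification $\pi_1(J)=G\to\pi_1(K_G)$ is the identity while $\pi_1(K_H)=H\to\pi_1(K_G)$ is the inclusion $H\hookrightarrow G$.

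For (ii), since $\pi_1(K_G)\cong G$, I would pass to the universal cover $p\colon\tilde K_G\to K_G$ and reduce to homology: $\tilde K_G$ is simply connected, so $\pi_2(K_G)\cong\pi_2(\tilde K_G)\cong H_2(\tilde K_G)$ by Hurewicz, an isomorphism of $\mathbb{Z}G$-modules for the deck-transformation action. The key geometric input is to identify the preimages of the three pieces. Because $\pi_1(J)\to G$ is the identity, $p^{-1}(J)\simeq\tilde J$; because the maps from $\pi_1(K_H)$ and $\pi_1(L)$ are the inclusion of $H$, the preimages $p^{-1}(K_H)$ and $p^{-1}(L)$ are the induced spaces $G\times_H\tilde K_H=\bigsqcup_{G/H}\tilde K_H$ and $G\times_H\tilde L=\bigsqcup_{G/H}\tilde L$, with $G$ permuting the copies and the stabiliser $H$ of the base copy acting by deck transformations. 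Since $L$ and $J$ are aspherical $2$-complexes, their universal covers are contractible (a simply connected $2$-complex with vanishing $\pi_2$ is acyclic, hence contractible by Whitehead), so $\tilde J$ and each copy of $\tilde L$ are contractible.

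I would then apply the $G$-equivariant Mayer--Vietoris sequence for the homotopy-pushout decomposition of $\tilde K_G$ into a neighbourhood of $p^{-1}(K_H)$ and a neighbourhood of $p^{-1}(J)$ with intersection $\simeq p^{-1}(L)$. All terms are $\mathbb{Z}G$-modules and all maps $\mathbb{Z}G$-linear. Contractibility gives $H_2(\tilde J)=H_2(\bigsqcup_{G/H}\tilde L)=H_1(\bigsqcup_{G/H}\tilde L)=0$, leaving the fragment
$$0\to H_2\Big(\bigsqcup_{G/H}\tilde K_H\Big)\to H_2(\tilde K_G)\to 0,$$
so $H_2(\tilde K_G)\cong H_2(\bigsqcup_{G/H}\tilde K_H)$. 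Finally $H_2(\tilde K_H)\cong\pi_2(\tilde K_H)\cong\pi_2(K_H)$, and the induced-space structure identifies $\bigoplus_{G/H}\pi_2(K_H)$ with the induced module $\mathbb{Z}G\otimes_{\mathbb{Z}H}\pi_2(K_H)$ as $\mathbb{Z}G$-modules, yielding (ii).

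The main obstacle is the equivariance bookkeeping in the final step: one must check that every identification respects the deck-transformation action, so that the Mayer--Vietoris connecting maps and the isomorphism $H_2(p^{-1}(K_H))\cong\mathbb{Z}G\otimes_{\mathbb{Z}H}\pi_2(K_H)$ are $\mathbb{Z}G$-linear, giving an isomorphism of modules rather than merely of abelian groups. This rests on correctly describing $p^{-1}(K_H)$ as the induced space $G\times_H\tilde K_H$, i.e. that the covering of $K_H$ pulled back from $\tilde K_G$ is $\bigsqcup_{G/H}\tilde K_H$ precisely because $H\hookrightarrow G$ is injective, which is exactly where the asphericity of $L$ and $J$ and the injectivity established in part (i) are essential.
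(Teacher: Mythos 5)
Your proposal is correct and follows essentially the same route as the paper: van Kampen gives $\pi_1(K_G)\cong H\ast_H G\cong G$, and for $\pi_2$ the paper likewise passes to the universal cover, applies Mayer--Vietoris to the two halves of the double mapping cylinder, uses contractibility of the covers of the aspherical pieces $L$ and $J$, and finishes with Hurewicz and the identification of $\bigoplus_{G/H}H_2(\tilde{K}_H)$ with $\mathbb{Z}G\otimes_{\mathbb{Z}H}\pi_2(K_H)$. The only cosmetic difference is that the paper indexes components of the preimages by transversals $T(G/H)$, $T'(G/H)$ following Harlander--Jenson, where you describe them directly as induced spaces $G\times_H \tilde{K}_H$.
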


\begin{proof}
That $K_{G}$ is a finite 3-complex is clear from Lemma \ref{dmc} and the fact that $J$, $K_{H}$ are finite 2-complexes and $L\times[0,1]$ is a finite 3-complex, and all cells in $K_{G}$ are from these three spaces by the proof of Lemma \ref{50}.
    The fundamental group follows from Van Kampen's theorem on CW-complexes to get $\pi_1(K_{G})\cong G\ast_H H\cong G$, as the maps are the standard identity and inclusion.

    It remains to prove the result on the second homotopy module. 
    We follow the method of \cite{MR2263062} in the proof of Theorem $4.2$, recalling that $K_{G}=K_{H}\cup (L\times [0,1])\cup J$
    
    Let $X$ be the universal cover of $K_{G}$, with $p:X\to K_{G}$ the covering map. Let $K_u=K_{H}\cup (L\times [0,\frac{1}{2}])$, and $K_v=J\cup (L\times [\frac{1}{2},1])$, so $K_u$ and $K_v$ cover $K_{G}$. 
    
    Note that $K_u\cap K_v=L\times \{\frac{1}{2}\}$. Let $X_u$ and $X_v$ be some connected component of $p^{-1}(K_u)$ and $p^{-1}(K_v)$ respectively. We have that $Y=X_u\cap X_v$ is a component of $p^{-1}(K_u\cap K_v)=p^{-1}(L\times \{\frac{1}{2}\})$, so is homeomorphic to the universal cover of $L$.
    
    We follow the method outlined in the proof of \cite[Theorem 4.2]{MR2263062} applied to the simplicial tree associated with the covering of $X$ by $p^{-1}(K_u)$ and $p^{-1}(K_v)$. This yields transversals $T(G/H)$ and $T'(G/H)$, and the following long exact sequence of homology as a consequence of the Mayor-Vietoris spectral sequence:

    $$...\to \bigoplus_{g'\in T'(G/H)} H_2(g'Y)\to \bigoplus_{g\in T(G/H)} H_2(gX_u)\oplus H_2(X_v)\to H_2(X)\to \bigoplus_{g'\in T'(G/H)} H_1(g'Y) \to...$$
    
    Now see that $Y$ is contractible as $L$ is aspherical and $Y$ is homeomorphic to it's universal cover, so we get that

    $$\bigoplus_{g\in T(G/H)} H_2(gX_u)\oplus H_2(X_v)\cong H_2(X)$$

    We conclude by applying Hurwitz theorem: as $X_u$ is the universal cover of $K_u$ which is homotopy equivalent to $K_{H}$, we get $H_2(X_u)\cong\pi_2(X_u)\cong\pi_2(K_{H})$ and likewise $H_2(X_v)\cong\pi_2(J)=0$ as $J$ aspherical. Also, $H_2(X)\cong \pi_2(X)\cong\pi_2(K_{G})$ as $X$ is the universal cover of $K_{G}$ by definition. Finally as $$\bigoplus_{g\in T(G/H)} H_2(gX_u)\cong \mathbb{Z}G\otimes_H\pi_2(K_{H})$$ we have that 
    \[\pi_2(K_{G})\cong\mathbb{Z}G\otimes_{\mathbb{Z}H}\pi_2(K_{H}). 
        \qedhere\]
    
\end{proof}

Now fix an aspherical group $G$, and finite 2-complexes $K_1$, $K_2$ with $\pi_1(K_i)\cong G$ for $i=1,2$. Similar to before we will show how to realise $\pi_2(K_1)\oplus \pi_2(K_2)\in SF(\mathbb{Z}G)$ as the second homotopy group of a finite 3-complex in a natural way.

We form the double mapping cylinder $$K=K_{1}\cup (J\times [0,1])\cup K_{2}$$ where $J$ is an aspherical 2-complex with fundamental group $G$ the glueing maps $h_i:J\hookrightarrow K_{i}$ are induced by $G\to G$ identity map of fundamental groups. This fits into the following homotopy commutative diagram of a homotopy pushout:

\[\begin{tikzcd}
J \arrow[r, "h_1"] \arrow[d, "h_2"'] & {K_{1}} \arrow[d, hook'] \\
{K_{2}} \arrow[r, hook]            & K                         
\end{tikzcd}\]

\begin{lemma}\label{6}
    $K$ has the structure of a finite 3-complex with the following algebraic characteristics:

    i) $\pi_1(K)\cong G$

    ii) $\pi_2(K)\cong\pi_2(K_{1})\oplus\pi_2(K_{2})$
\end{lemma}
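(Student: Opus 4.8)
The plan is to follow the proof of Lemma \ref{5} almost verbatim, observing that the present situation is the degenerate case in which the amalgamating subgroup is all of $G$; this collapses the Mayer--Vietoris spectral sequence to an ordinary Mayer--Vietoris sequence and allows both $K_1$ and $K_2$ to contribute their full second homotopy module. First I would check that $K$ is a finite $3$-complex: this is immediate from Lemma \ref{dmc}, since $K_1,K_2$ are finite $2$-complexes, $J\times[0,1]$ is a finite $3$-complex, and by the proof of Lemma \ref{50} every cell of $K$ comes from one of these three pieces. For (i), Van Kampen's theorem applied to the homotopy pushout gives $\pi_1(K)\cong \pi_1(K_1)\ast_{\pi_1(J)}\pi_1(K_2)\cong G\ast_G G$, where both amalgamating maps $(h_i)_\ast$ are the identity isomorphism $G\to G$; an amalgamated product along identity maps is just $G$, so $\pi_1(K)\cong G$.

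For (ii) I would pass to the universal cover $p:X\to K$ and cover $K$ by $K_u=K_1\cup(J\times[0,\tfrac12])$ and $K_v=K_2\cup(J\times[\tfrac12,1])$, so that $K_u\cap K_v=J\times\{\tfrac12\}$. The crucial difference from Lemma \ref{5} is that each inclusion induces an isomorphism on $\pi_1$ (all three groups being $G$), so each of $p^{-1}(K_u)$ and $p^{-1}(K_v)$ is connected and equals the universal cover $X_u$, respectively $X_v$, while $Y:=X_u\cap X_v=p^{-1}(J\times\{\tfrac12\})$ is connected and homeomorphic to the universal cover of $J$. Equivalently, the transversal $T(G/G)$ is a single point, so the Mayer--Vietoris spectral sequence of the cover degenerates to the ordinary Mayer--Vietoris sequence for $X=X_u\cup X_v$. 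This connectivity verification is really the only step requiring care beyond Lemma \ref{5}; once it is in place, the homological input is routine.

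It then remains to run the Mayer--Vietoris sequence. Since $J$ is aspherical, its universal cover $Y$ is a simply-connected $2$-complex with $\pi_2(Y)\cong\pi_2(J)=0$, whence Hurewicz forces all of its reduced homology to vanish and $Y$ is contractible; in particular $H_1(Y)=H_2(Y)=0$. The sequence therefore yields $H_2(X)\cong H_2(X_u)\oplus H_2(X_v)$. Finally, applying Hurewicz to the simply-connected spaces $X$, $X_u$, $X_v$, and using the deformation retractions $K_u\simeq K_1$ and $K_v\simeq K_2$, I would identify $H_2(X)\cong\pi_2(K)$, $H_2(X_u)\cong\pi_2(K_1)$ and $H_2(X_v)\cong\pi_2(K_2)$, giving $\pi_2(K)\cong\pi_2(K_1)\oplus\pi_2(K_2)$ as required. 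I expect the main obstacle to be purely bookkeeping: confirming that the $\pi_1$-isomorphisms make the relevant preimages connected (so that $X_u,X_v,Y$ are genuinely the universal covers one expects), which is exactly what replaces the coset-transversal analysis of Lemma \ref{5}.
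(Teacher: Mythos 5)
Your proposal is correct and follows essentially the same route as the paper: the paper's own proof of Lemma \ref{6} is exactly a sketch of the Lemma \ref{5} argument with the cover $K_u=K_1\cup(J\times[0,\tfrac12])$, $K_v=K_2\cup(J\times[\tfrac12,1])$, trivial transversals $T(G/G)$, contractibility of $Y$, and Hurewicz, which is what you carry out. Your explicit verification that the $\pi_1$-isomorphisms force $p^{-1}(K_u)$, $p^{-1}(K_v)$ and $Y$ to be connected is a detail the paper leaves implicit, and it is a welcome addition rather than a deviation.
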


\begin{proof}
    The proof is analogous to that of Lemma \ref{5}, for the sake of brevity we will give only a sketch of the approach.

    We have fundamental group $G=G*_G G$ under the identity maps, and define $X$, $K_u$, $K_v$, $X_u$, $X_v$ and $Y$ by exchanging each space from the proof of lemma \ref{5} with the new space that appears in same corner of the corresponding homotopy commutative diagram in Lemma \ref{6}. This yields a similar long exact sequence, but in this case all the transversals $T(G/G)$ are trivial, so in fact the situation is simpler:

    $$...\to  H_2(g'Y)\to  H_2(gX_u)\oplus H_2(X_v)\to H_2(X)\to  H_1(g'Y) \to...$$ 
    
    Likewise, $Y$ is contractible, so the sequence collapses to give

    $$ H_2(gX_u)\oplus H_2(X_v)\cong H_2(X)$$
    
    Now, by following the new definitions of $X_u$ and $X_v$ and by application of Hurwitz's theorem, we have that $H_2(X_u)\cong\pi_2(K_{1})$, $H_2(X_v)\cong \pi_2(K_{2})$ and $H_2(X)\cong \pi_2(K) $, so 

    $$\pi_2(K)\cong\pi_2(K_{1})\oplus\pi_2(K_{2})$$ and we have the result. \qedhere

\end{proof}

\section{Relating standard pushouts and homotopy pushouts}\label{po}

In this section we will use a result from homotopy theory to give conditions under which standard and homotopy pushouts are homotopy equivalent. We then apply this to Lemmas \ref{5} and \ref{6} to show that under certain assumptions the finite $3$-complexes constructed are homotopy equivalent to finite 2-complexes.

\begin{definition}
    Given topological spaces $X, Y, Z$ and continuous maps $f:Z\to X$, $g:Z\to Y$, the pushout of $f,g$ in the category of topological spaces is defined to be $X\sqcup Y/\sim$ where for $x\in X, y\in Y$, $x\sim y$ if and only if there is $z\in Z$ such that $x=f(z)$ and $y=g(z)$. Write this as $X\cup Z\cup Y$. The pushout $P(f,g)$ fits into the following commutative diagram: 
    \[\begin{tikzcd}
X \arrow[r, "f"] \arrow[d, "g"'] & Y \arrow[d, "i_Y'"] \\
Z \arrow[r, "i_Z'"']             & {P(f,g)}           
\end{tikzcd}\]
where the maps $i_Z'$, $i_Y'$ are not necessarily inclusion maps (unlike in a homotopy pushout).
    
\end{definition}

Note that we have been considering $M(f,g)=X\cup (Z\times [0,1])\cup Y$, which is the double mapping cylinder formed as the homotopy pushout of $f$, $g$, which always exists in the category of CW-complexes with cellular maps. One might hope that the standard pushout $P(f,g)=X\cup Z\cup Y$ exists in the category of CW-complexes, and further that the pushout of finite 2-complexes is also a finite 2-complex that is homotopy equivalent to the homotopy pushout, but neither of these are true in general.

\begin{proposition}\label{36}
    Given CW-complexes $X,Y,Z$ and cellular maps $f:Z\hookrightarrow X$ and $g:Z\to Y$ such that $f$ is an inclusion of subcomplexes, we have that $M(f,g)$ is homotopy equivalent to $P(f,g)$.
\end{proposition}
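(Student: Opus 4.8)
The plan is to exhibit the natural collapse map $q\colon M(f,g)\to P(f,g)$, which crushes the cylinder coordinate $Z\times[0,1]$ down to $Z$, and to show that it is a homotopy equivalence by means of the standard \emph{gluing lemma} for pushouts along cofibrations. The conceptual point is that $M(f,g)$ and $P(f,g)$ are the pushouts of two spans which differ only by replacing $X$ with the mapping cylinder of $f$, and these two spans are joined by a map all of whose components are homotopy equivalences.

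First I would regroup the double mapping cylinder by isolating the mapping cylinder of $f$. Setting $M_f = X\sqcup_f (Z\times[0,1])$ (so $Z\times\{0\}$ is glued to $X$ along $f$), we may write $M(f,g) = M_f \cup_g Y$, where $Y$ is attached to $M_f$ along the free top face $Z\times\{1\}\cong Z$ via $g$. In other words $M(f,g)$ is the pushout of the span $M_f \xleftarrow{\,j\,} Z \xrightarrow{\,g\,} Y$, with $j$ the inclusion of the top face. Every mapping cylinder strongly deformation retracts onto its target, so the projection $r\colon M_f\to X$, given by $(z,t)\mapsto f(z)$ and $\mathrm{id}$ on $X$, is a homotopy equivalence; moreover $r\circ j = f$ holds strictly.

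Next I would apply the gluing lemma to the map of spans whose top row is $M_f \xleftarrow{\,j\,} Z \xrightarrow{\,g\,} Y$, whose bottom row is $X \xleftarrow{\,f\,} Z \xrightarrow{\,g\,} Y$, and whose vertical components are $r$, $\mathrm{id}_Z$, and $\mathrm{id}_Y$. Both squares commute, since $r\circ j = f$ and $\mathrm{id}_Y\circ g = g\circ\mathrm{id}_Z$, and all three vertical maps are homotopy equivalences. Because $f$ is an inclusion of CW-subcomplexes by hypothesis and $j$ is the inclusion of the subcomplex $Z\times\{1\}\subseteq M_f$, both legs $f$ and $j$ are cofibrations. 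The gluing lemma then yields that the induced map on pushouts $M(f,g)=M_f\cup_Z Y \longrightarrow X\cup_Z Y = P(f,g)$ is a homotopy equivalence; tracing through the universal property, this induced map is exactly the collapse map $q$.

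The substance of the argument lies entirely in verifying the hypotheses of the gluing lemma rather than in any computation: one must confirm that $f$ and $j$ are genuine cofibrations (this is precisely where the standing assumption that $f$ is an inclusion of subcomplexes enters, together with the observation that $Z\times\{1\}$ is a subcomplex of $M_f$), and that the map induced on pushouts coincides with the geometric collapse map. The gluing lemma itself---that a map of spans which is a cofibration on a chosen leg in each row and a homotopy equivalence on each vertical induces a homotopy equivalence of pushouts---is the key homotopy-theoretic input, which I would quote from a standard reference rather than reprove.
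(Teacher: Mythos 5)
Your proof is correct, but it takes a different route from the paper's, which is shorter and more modular. The paper simply quotes, as its Lemma \ref{37}, the ready-made statement that if $f:Z\hookrightarrow X$ is a cofibration then $M(f,g)$ is homotopy equivalent to $P(f,g)$ (citing \cite[Proposition 6.49]{MR2839990}), combines it with Lemma \ref{7} (a CW-subcomplex inclusion is a cofibration), and is done in two lines. You instead unfold what is essentially the standard proof of that quoted result: you realise $M(f,g)$ as the strict pushout of $M_f \xleftarrow{j} Z \xrightarrow{g} Y$, map this span to $X \xleftarrow{f} Z \xrightarrow{g} Y$ via the cylinder retraction $r$ and identities, check that both squares commute strictly and that both legs $j$ and $f$ are cofibrations, and invoke the gluing lemma for pushouts along cofibrations. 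Both arguments ultimately rest on the same two ingredients --- subcomplex inclusions are cofibrations, plus one black-boxed piece of homotopy theory --- but they black-box different things: the paper delegates the entire equivalence $M(f,g)\simeq P(f,g)$ to its reference, while you delegate only the gluing lemma and in exchange must verify the span decomposition, the strict commutativity $r\circ j=f$, and the identification of the induced map with the collapse map. What your version buys is self-containedness and an explicit description of the homotopy equivalence (the collapse of the cylinder coordinate), which the paper's citation leaves implicit; what the paper's version buys is brevity. One small point worth making explicit in your write-up: the claim that $Z\times\{1\}$ is a subcomplex of $M_f$ uses that $f$ is cellular, so that $M_f$ inherits a CW structure in the first place --- this is the content of the paper's Lemma \ref{50}, and it is the reason the proposition is stated for cellular maps.
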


To prove this we require the following lemmas:

\begin{lemma}\label{37}
    Given topological spaces $X,Y,Z$ and maps $f:Z\hookrightarrow X$ and $g:Z\to Y$ such that $f$ is a cofibration, we have that $M(f,g)$ is homotopy equivalent to $P(f,g)$.

\end{lemma}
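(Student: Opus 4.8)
The plan is to exhibit both $M(f,g)$ and $P(f,g)$ as ordinary (strict) pushouts of closely related spans, and then to compare the two via the gluing lemma for cofibrations. Write $M_f = X\sqcup_f(Z\times[0,1])$ for the mapping cylinder of $f$, in which $Z\times\{0\}$ is attached to $X$ along $f$ and $Z$ is identified with the free end $Z\times\{1\}$ through the inclusion $\iota\colon Z\hookrightarrow M_f$. Unwinding the definition of the double mapping cylinder, attaching $Y$ to $M_f$ along $\iota$ and $g$ recovers exactly
\[
M(f,g)\;=\;M_f\cup_Z Y,
\]
the strict pushout of the span $M_f\xleftarrow{\iota}Z\xrightarrow{g}Y$. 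By definition $P(f,g)=X\cup_Z Y$ is the strict pushout of the span $X\xleftarrow{f}Z\xrightarrow{g}Y$.

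These two spans fit into a commutative ladder
\[
\begin{tikzcd}
M_f \arrow[d, "\simeq"'] & Z \arrow[l, "\iota"'] \arrow[r, "g"] \arrow[d, "\mathrm{id}"] & Y \arrow[d, "\mathrm{id}"] \\
X & Z \arrow[l, "f"] \arrow[r, "g"'] & Y
\end{tikzcd}
\]
in which the left-hand vertical map is the canonical retraction $r\colon M_f\to X$ collapsing the cylinder. I would then verify the three hypotheses of the gluing lemma in turn. First, the diagram commutes: the left square because $r\circ\iota=f$, and the right square trivially. Second, the three vertical maps are homotopy equivalences, since $r$ is the standard mapping-cylinder deformation retraction and the remaining two are identities. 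Third, the left legs of both spans are cofibrations: $\iota$ is the inclusion of an end of a cylinder, which is always a cofibration, while $f$ is a cofibration by hypothesis. This last point is exactly where the assumption on $f$ is used.

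With the hypotheses in place, the gluing lemma yields that the induced map on pushouts $M(f,g)\to P(f,g)$ — which is precisely the canonical map collapsing $Z\times[0,1]$ onto $Z$ — is a homotopy equivalence, as required. The main obstacle is not any single computation but rather setting up the comparison cleanly: one must identify $M(f,g)$ with the pushout $M_f\cup_Z Y$, check the compatibility $r\circ\iota=f$, and invoke a version of the gluing lemma whose cofibration hypothesis is placed on the corresponding (left) legs of both spans. An alternative, more hands-on route would use the homotopy extension property of $f$ directly: the associated retraction of $X\times[0,1]$ onto $(X\times\{0\})\cup(Z\times[0,1])$ can be assembled into an explicit strong deformation retraction of $M(f,g)$ onto an embedded copy of $P(f,g)$. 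I would prefer the gluing-lemma approach, as it isolates the role of the cofibration hypothesis and avoids writing down these homotopies by hand.
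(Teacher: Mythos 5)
Your proof is correct, and it takes a genuinely different route from the paper, for the simple reason that the paper offers no argument at all: it disposes of this lemma with the single line ``This is \cite[Proposition 6.49]{MR2839990}'', whereas you actually prove the statement modulo one standard quotable input, the gluing lemma for homotopy equivalences. Your reduction is the canonical one: realise $M(f,g)$ as the strict pushout of the span $M_f \xleftarrow{\iota} Z \xrightarrow{g} Y$, with $M_f$ the mapping cylinder of $f$ and $\iota$ the free-end inclusion; map this span to $X \xleftarrow{f} Z \xrightarrow{g} Y$ via the cylinder retraction $r$ and identities; check $r\circ\iota=f$; and feed the cofibrations $\iota$, $f$ (on matching legs) and the three homotopy equivalences into the gluing lemma, so that the collapse map $M(f,g)\to P(f,g)$ is a homotopy equivalence. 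All hypotheses are verified, and the hypothesis on $f$ enters exactly where you say it does. Two minor points of care: your justification that $\iota$ is a cofibration (``inclusion of an end of a cylinder'') is slightly loose, since the other end of that cylinder has been glued to $X$ along $f$; the precise statement is the standard fact that for an \emph{arbitrary} map $f$ the free-end inclusion $Z\cong Z\times\{1\}\hookrightarrow M_f$ is a cofibration (this is exactly how one factors any map as a cofibration followed by a homotopy equivalence). Also, some textbook versions of the gluing lemma assume \emph{closed} cofibrations; this costs nothing in the paper's only application (Proposition \ref{36}, where $f$ is an inclusion of CW subcomplexes, hence a closed cofibration). As for what each approach buys: the paper's citation is economical but hides all content in the reference, while your argument exposes the mechanism, isolates the dependence on a single standard result, and makes visible that the cofibration condition on $f$ is the only point-set input needed.
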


\begin{proof}
   This is \cite[Proposition 6.49]{MR2839990}.
\end{proof}

\begin{lemma}\label{7}
If $X$ is a CW-complex and
 $A$ is a subcomplex of $X$, then the inclusion $A\hookrightarrow X$ is a cofibration
\end{lemma}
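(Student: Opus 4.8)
The plan is to prove the equivalent statement that the inclusion $A\hookrightarrow X$ has the homotopy extension property (HEP), since a map is a cofibration precisely when it has HEP. For the pair $(X,A)$, HEP is in turn equivalent to the existence of a retraction $r:X\times[0,1]\to (X\times\{0\})\cup(A\times[0,1])$, so I would reduce the entire problem to constructing such a retraction; in fact I would aim to exhibit $(X\times\{0\})\cup(A\times[0,1])$ as a deformation retract of $X\times[0,1]$, which gives HEP immediately.

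The engine of the construction is a single cell. For the standard pair $(D^n,S^{n-1})$ there is an explicit retraction
\[
\rho_n:D^n\times[0,1]\to (D^n\times\{0\})\cup(S^{n-1}\times[0,1]),
\]
obtained by radially projecting from a point placed above the cylinder $D^n\times[0,1]$; this map is elementary to write down and manifestly continuous, and it is in fact a deformation retraction. I would record this model first, as it drives the whole inductive argument.

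Next I would globalise skeleton by skeleton. Since $A$ is a subcomplex, $X$ is built from $A$ by successively attaching cells, and each attachment of an $n$-cell is a pushout along its attaching map $S^{n-1}\to X^{(n-1)}$. Transporting the model retraction $\rho_n$ along the characteristic maps of the $n$-cells not lying in $A$, and leaving everything already retracted fixed, I would extend the retraction defined on $(A\cup X^{(n-1)})\times[0,1]$ to one on $(A\cup X^{(n)})\times[0,1]$. Iterating over $n$ produces a compatible family of deformation retractions, one for each skeleton.

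The main obstacle is the passage to the possibly infinite-dimensional colimit: one must ensure that the assembled map on all of $X\times[0,1]$ is continuous. I would resolve this in the standard way by carrying out the $n$-th stage of the deformation during a shrinking time subinterval, say $[2^{-(n+1)},2^{-n}]$, so that the full deformation retraction is completed within $[0,1]$. Continuity then holds because $X\times[0,1]$ carries the weak topology determined by its skeleta---here one uses that $[0,1]$ is compact, so that forming the product with $[0,1]$ preserves the colimit description of $X$---and a map out of such a space is continuous if and only if its restriction to each skeleton is. This yields the required retraction, establishing HEP and hence that $A\hookrightarrow X$ is a cofibration.
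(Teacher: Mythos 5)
Your proof is correct, but it takes a genuinely different route from the paper. The paper's proof is a one-line appeal to abstract closure properties: the generating inclusions $S^k\hookrightarrow D^{k+1}$ are cofibrations, and cofibrations are preserved under pushouts (strictly, for infinite complexes one also needs closure under coproducts and sequential colimits, which the paper leaves implicit in calling the result standard). You instead carry out the classical hands-on construction: the explicit radial-projection retraction of $D^n\times[0,1]$ onto $(D^n\times\{0\})\cup(S^{n-1}\times[0,1])$, pushed through the characteristic maps cell by cell, assembled over skeleta, with the time-rescaling trick on the intervals $[2^{-(n+1)},2^{-n}]$ and the weak-topology argument (using local compactness of $[0,1]$ so that the product preserves the colimit) to handle continuity in the infinite-dimensional case. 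Each approach has its merits: the paper's is shorter and phrases the result in the language of closure properties of cofibrations, which generalises immediately to any saturated class of maps; yours is self-contained and elementary, actually proves the stronger statement that $(X\times\{0\})\cup(A\times[0,1])$ is a deformation retract of $X\times[0,1]$, and makes explicit precisely the colimit and point-set subtleties that the paper's citation-style proof glosses over. One small point of care in your skeletal step: the transported retractions descend to the quotient $(A\cup X^{(n)})\times[0,1]$ because the model retraction restricts to the identity on $S^{n-1}\times[0,1]$ and because taking the product with the locally compact space $[0,1]$ preserves the quotient description of cell attachment; it is worth stating this explicitly, but it is exactly the standard argument and does not affect correctness.
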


\begin{proof}
    This is a standard result, which follows from the fact that $S^k\hookrightarrow D^{k+1}$ is a cofibration, and that cofibrations are invariant under pushouts. 
\end{proof}

\begin{proof}[Proof of Proposition \ref{36}]
As $f:Z\hookrightarrow X $ is an inclusion of subcomplexes, it is a cofibration by Lemma \ref{7}, and hence $M(f,g)$ is homotopy equivalent to $P(f,g)$ by lemma \ref{37}.
\end{proof}

\begin{corollary}\label{44}
    Let $H\leqslant G$ be aspherical groups and $M\in SF(\mathbb{Z}H)$ be geometrically realisable by a finite 2-complex $K_H$ with aspherical subcomplex $L$ with fundamental group $H$, then \[\mathbb{Z}G\otimes_{\mathbb{Z}H}M\in SF(\mathbb{Z}G)\] is geometrically realisable.
\end{corollary}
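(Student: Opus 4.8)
The plan is to build a finite 3-complex realising $\mathbb{Z}G \otimes_{\mathbb{Z}H} M$ via the double mapping cylinder of Lemma \ref{5}, and then to exploit the hypothesis that $L$ sits inside $K_H$ as a genuine subcomplex to collapse this 3-complex down to a homotopy-equivalent finite 2-complex by means of Proposition \ref{36}. The whole point is that Lemma \ref{5} on its own only produces a 3-complex, since the double mapping cylinder inserts the 3-dimensional slab $L \times [0,1]$; the subcomplex hypothesis is exactly what allows us to remove that slab up to homotopy.

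Concretely, I would first take $L$ to be the given aspherical subcomplex of $K_H$, so that the cellular map $f \colon L \to K_H$ appearing in Lemma \ref{5} is the honest inclusion $L \hookrightarrow K_H$, and let $g \colon L \to J$ be a cellular map to an aspherical 2-complex $J$ with $\pi_1(J) \cong G$ induced by $H \hookrightarrow G$. Feeding this data into Lemma \ref{5} produces the finite 3-complex $K_G = M(f,g) = K_H \cup (L \times [0,1]) \cup J$ with $\pi_1(K_G) \cong G$ and $\pi_2(K_G) \cong \mathbb{Z}G \otimes_{\mathbb{Z}H} \pi_2(K_H) \cong \mathbb{Z}G \otimes_{\mathbb{Z}H} M$, using that $K_H$ geometrically realises $M$, so $\pi_2(K_H) \cong M$.

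Next I would pass from the homotopy pushout to the ordinary pushout. Since $L$ is a subcomplex of $K_H$, the map $f$ is an inclusion of subcomplexes (a cofibration by Lemma \ref{7}), so Proposition \ref{36} applies and gives $K_G = M(f,g) \simeq P(f,g)$. The payoff is dimensional: $P(f,g) = K_H \sqcup_g J$ is precisely the gluing of Lemma \ref{50} applied to the CW pair $(K_H, L)$ along the cellular map $g \colon L \to J$, so its cells are those of $K_H$ outside $L$ together with those of $J$, whence $P(f,g)$ is a finite 2-complex. A homotopy equivalence preserves $\pi_1$ and $\pi_2$, so $P(f,g)$ is a finite 2-complex with $\pi_1 \cong G$ and $\pi_2 \cong \mathbb{Z}G \otimes_{\mathbb{Z}H} M$, which is exactly the required geometric realisation. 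That $\mathbb{Z}G \otimes_{\mathbb{Z}H} M \in SF(\mathbb{Z}G)$ then follows either because $\mathbb{Z}G$ is free as a $\mathbb{Z}H$-module (so extension of scalars preserves stable freeness) or directly from the syzygy argument applied to the 2-complex just constructed.

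I expect the main thing to get right to be the interface between Lemma \ref{5} and Proposition \ref{36}, namely that the \emph{same} $L$ serves both as the arbitrary aspherical model used in Lemma \ref{5} and as the actual subcomplex of $K_H$ required by Proposition \ref{36}. The one technical point I would state explicitly is that the inclusion $f \colon L \hookrightarrow K_H$ induces an isomorphism on $\pi_1$, so that the Mayer--Vietoris and tensor computations inside Lemma \ref{5} go through verbatim with $L$ taken to be the subcomplex; this is what is meant by "$L$ aspherical with fundamental group $H$" together with $\pi_1(K_H) \cong H$, but it deserves to be recorded rather than left implicit.
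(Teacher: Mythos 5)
Your proof is correct and follows essentially the same route as the paper's own: apply Lemma \ref{5} with $f$ taken to be the honest inclusion $L \hookrightarrow K_H$, use Proposition \ref{36} to replace the double mapping cylinder $M(f,g)$ by the ordinary pushout $P(f,g)$, and identify $P(f,g)$ with the gluing of Lemma \ref{50}, which is visibly a finite 2-complex with the right $\pi_1$ and $\pi_2$. Your closing observation that the inclusion $L \hookrightarrow K_H$ must induce an isomorphism on $\pi_1$ for the computation of Lemma \ref{5} to go through verbatim is a point the paper leaves implicit, and is worth recording.
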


\begin{proof}
    By Lemma \ref{5}, $\mathbb{Z}G\otimes_{\mathbb{Z}H}M\in SF(\mathbb{Z}G)$ is realised as the second homotopy group of a (finite 3-complex) double mapping cylinder $M(f,g)$ where $f:L\hookrightarrow K_H$ and $g:L \to J$ and $J$ is an aspherical 2-complex for $G$. In particular the induced map $f$ is an inclusion of subcomplexes, so by Proposition \ref{36} $M(f,g)$ is homotopy equivalent to the standard pushout $P(f,g)$. But this is equal to the glueing $K_G\sqcup_f J$ as in Lemma \ref{50} where $f$ is defined on the subcomplex $L$ of $K_G$, which is clearly a finite 2-complex.
\end{proof}

\begin{corollary}\label{45}
    Let $G$ be an aspherical group and $M,N\in SF(\mathbb{Z}G)$ be geometrically realisable by finite 2-complexes $K_{1}$ and $K_{2}$ containing aspherical subcomplexes $J_1$ and $J_2$ respectively with fundamental group $G$. Then \[M\oplus  N\in SF(\mathbb{Z}G)\] is geometrically realisable.
\end{corollary}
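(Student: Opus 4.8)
The plan is to run the direct-sum analogue of the argument used for Corollary \ref{44}, substituting the direct-sum construction of Lemma \ref{6} for the extension-of-scalars construction of Lemma \ref{5}. First I would invoke Lemma \ref{6} to realise $M\oplus N\cong \pi_2(K_1)\oplus\pi_2(K_2)$ as the second homotopy group of the double mapping cylinder $K=K_1\cup(J\times[0,1])\cup K_2$, where $J$ is an aspherical $2$-complex for $G$ and the gluing maps $h_1\colon J\to K_1$, $h_2\colon J\to K_2$ induce isomorphisms on $\pi_1$. By that lemma $K$ is a finite $3$-complex with $\pi_1(K)\cong G$ and $\pi_2(K)\cong M\oplus N$, so the whole problem reduces, exactly as in Corollary \ref{44}, to showing that this $3$-complex is homotopy equivalent to a finite $2$-complex.

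The key choice, and the only place the hypothesis on the aspherical subcomplexes enters, is to take $J=J_1$, the given aspherical subcomplex of $K_1$. Then $h_1$ may be taken to be the inclusion $J_1\hookrightarrow K_1$, while $h_2\colon J_1\to K_2$ is the cellular map inducing the identity of $G$ (which exists since $J_1$ is $2$-dimensional, as noted before Lemma \ref{5}). With this choice $h_1$ is an inclusion of subcomplexes, hence a cofibration by Lemma \ref{7}, so Proposition \ref{36} applies and yields a homotopy equivalence $K=M(h_1,h_2)\simeq P(h_1,h_2)$. Note that the proof of Lemma \ref{6} uses only the asphericity of the middle complex and the Mayer--Vietoris argument, so it is unaffected by $h_1$ being an inclusion rather than an identity-induced map.

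It then remains to identify the standard pushout $P(h_1,h_2)$ as a finite $2$-complex. Since $h_1$ is the inclusion of the subcomplex $J_1$, the pushout is precisely the gluing $K_1\sqcup_{h_2}K_2$ of Lemma \ref{50}, whose cells are those of $K_1$ not lying in $J_1$ together with those of $K_2$; as $K_1$ and $K_2$ are finite $2$-complexes, so is the result. Because homotopy equivalence preserves $\pi_1$ and $\pi_2$, we get $\pi_1\big(P(h_1,h_2)\big)\cong G$ and $\pi_2\big(P(h_1,h_2)\big)\cong M\oplus N$, whence $M\oplus N$ is geometrically realisable. I expect the main (and essentially only) obstacle to be bookkeeping rather than conceptual: one must check that setting $J=J_1$ is legitimate, i.e. that the inclusion $J_1\hookrightarrow K_1$ induces an isomorphism on $\pi_1$ so that Van Kampen still gives $G\ast_G G\cong G$. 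Everything else is a direct transcription of the proof of Corollary \ref{44}.
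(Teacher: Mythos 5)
Your proof is correct, but it takes a genuinely different (and somewhat more economical) route than the paper's. The paper first performs a preliminary reduction to the case $J_1=J_2=J$: it forms the gluing $K_1\sqcup_f J_2$ along the induced map $f:J_1\to J_2$, invoking the proof of Corollary \ref{44} with $H=G$ to see that this does not change $\pi_2$, so that afterwards \emph{both} gluing maps $h_i:J\hookrightarrow K_i$ are inclusions of the same subcomplex, and only then applies Proposition \ref{36}. You instead exploit the asymmetry already built into Proposition \ref{36} (via Lemma \ref{37}): only one of the two maps needs to be a cofibration. Taking the middle complex to be $J_1$ itself, with $h_1$ the inclusion $J_1\hookrightarrow K_1$ and $h_2:J_1\to K_2$ an arbitrary cellular map inducing the identity on $\pi_1$, you obtain $M(h_1,h_2)\simeq P(h_1,h_2)=K_1\sqcup_{h_2}K_2$ in one step, skipping the preliminary gluing entirely; your observation that the Mayer--Vietoris argument of Lemma \ref{6} is insensitive to whether $h_1$ is an inclusion or an identity-induced map is the right justification. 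What the paper's symmetric setup buys is that the resulting 2-complex visibly contains $J$ as a subcomplex, which is precisely the property the induction in the proof of Theorem B requires; your construction also yields this (the copy of $K_2$, and hence $J_2$, survives into the pushout by Lemma \ref{50}), but it would need to be noted separately if one wanted to feed your version into that induction. Finally, the point you flag at the end --- that the inclusion $J_1\hookrightarrow K_1$ must induce an isomorphism on $\pi_1$ --- is indeed an implicit assumption, but it is exactly the same one the paper makes in Lemma \ref{5} and Corollary \ref{44} when it identifies the inclusion of the aspherical subcomplex with the map ``induced by the identity map of groups'', so your argument stands on the same footing as the paper's in this respect.
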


\begin{proof}
    We first note that we may assume that $J_1=J_2=J$ as otherwise we can take the induced map $f:J_1\to J_2$ and form $K_1\sqcup_f J_2$ which doesn't change $\pi_2$ by the proof of Corollary \ref{44} with $H=G$, and now contains aspherical subcomplex $J_2$.

    Now it suffices to note that for $h_i:J\hookrightarrow K_i$, $i=1,2$ maps induced by inclusions of subcomplexes, we have that $M(h_1,h_2)$ is homotopy equivalent to the finite 2-complex $P(h_1,h_2)$ by Proposition \ref{36}.
\end{proof}

\section{Proofs of Theorems \ref{1} and \ref{2}}

In this section we prove Theorem \ref{2} and then Theorem \ref{1}. The main idea of the first proof will be to use Corollaries \ref{44} and \ref{45} in succession. The second proof only requires that we verify that relation modules are geometrically realised by finite 2-complexes satisfying the conditions of Theorem \ref{2}.\\

\noindent\textbf{Theorem B.} \textit{If $H_1,...H_k\leqslant G$ for $k\geqslant 1$ are aspherical groups and for $i=1,...,k$ $N_i\in SF(\mathbb{Z}H_i)$ are geometrically realisable by finite 2-complexes $K_i$ containing aspherical subcomplexes $L_i$ with fundamental group $H_i$, then \[(\mathbb{Z}G\otimes_{\mathbb{Z}H_1} N_1)\oplus (\mathbb{Z}G\otimes_{\mathbb{Z}H_2} N_2)\oplus...\oplus(\mathbb{Z}G\otimes_{\mathbb{Z}H_k} N_k)\in SF(\mathbb{Z}G)\] is geometrically realisable.}

\begin{proof}
    First we let $f_i:L_i\to J$ for $J$ an aspherical 2-complex with fundamental group $G$ be the map induced by inclusion of fundamental groups $H_i\hookrightarrow G$. We define the pushout $\tilde{K_i}=K_i\sqcup_{f_i}J$  as in the proof of Corollary \ref{44}, with $L_i$ a subcomplex of $K_i$ by assumption. Then by Corollary \ref{44} these pushouts are precisely the geometric realisations of $\mathbb{Z}G\otimes_{\mathbb{Z}H_i}N_i\in SF(\mathbb{Z}G)$ for $i=1,...,k$. Further, these pushouts contain an aspherical subcomplex $J$ with fundamental group group $G$.

     To prove the direct sums are realisable, we proceed by induction on $n\leqslant k-1$, so assume that $(\mathbb{Z}G\otimes_{\mathbb{Z}H_1} N_1)\oplus (\mathbb{Z}G\otimes_{\mathbb{Z}H_2} N_2)\oplus...\oplus(\mathbb{Z}G\otimes_{\mathbb{Z}H_{n}} N_{n})$ is geometrically realisable by a finite 2-complex $R_{1}$ with aspherical subcomplex for $G$ given by some $J$ (this is true by assumption for the base case $n=1$). Now, $\mathbb{Z}G\otimes_{\mathbb{Z}H_{n+1}}M_{n+1}$ is also geometrically realisable by a finite 2-complex $R_{2}=\tilde{K_n}$ with aspherical subcomplex for $G$ given by $J$ as the choice made in the construction of $\tilde{K_i}$ was arbitrary. By Corollary \ref{45} with $J_1=J_2=J$, $((\mathbb{Z}G\otimes_{\mathbb{Z}H_1} N_1 )\oplus...\oplus(\mathbb{Z}G\otimes_{\mathbb{Z}H_{n}} N_{n}))\oplus(\mathbb{Z}G\otimes_{\mathbb{Z}H_{n+1}} M_{n+1})$ is geometrically realisable by some $K$. Further, it contains an aspherical subcomplex for $G$ as the construction from the proof of Corollary \ref{45} will give $K=P(h_1,h_2)$ for $h_i:J\hookrightarrow R_i$ the inclusion of  $J$ as a subcomplex for $i=1,2$ , which in particular preserves $J$ as a subcomplex of $K$. By induction we conclude the result. \qedhere

\end{proof}

As a corollary to this we prove:\\

\noindent\textbf{Theorem A.} \textit{If $H_1,...H_k\leqslant G$ for $k\geqslant 1$ are aspherical groups and $M_i\in SF(\mathbb{Z}H_i)$ for $i=1,...,k$ are relation modules, then \[(\mathbb{Z}G\otimes_{\mathbb{Z}H_1} M_1)\oplus (\mathbb{Z}G\otimes_{\mathbb{Z}H_2} M_2)\oplus...\oplus(\mathbb{Z}G\otimes_{\mathbb{Z}H_k} M_k)\in SF(\mathbb{Z}G)\] is geometrically realisable.}

\begin{proof}
    By \cite[Corollary 4.3]{MR2263062}, relation modules $M_i\in SF(\mathbb{Z}H_i)$ for $i=1,..,k$ are geometrically realisable by 
    
    $$K_i=(L_i\cup (W_i\times [0,1])\cup L_i)^{(2)}$$ respectively, for some carefully chosen $W_i$ associated with the relation module and $L_i$ aspherical 2-complexes with $\pi_1(L_i)\cong H_i$. This gives that relation modules satisfy the conditions of Theorem \ref{2}, so we have the result.  \qedhere

\end{proof}

\section{Applications}

Theorem \ref{2} and its corollaries yield a wealth of geometrically realisable stably-free modules over integral group rings of aspherical groups, and it is to be expected that a large amount of these new examples will be non-free.

As an application of Corollary \ref{4} we will exhibit an example of a new finite 2-complex with non-free $\pi_2$ and fundamental group $T(10,15)$, a torus knot group over which no such examples have been found. The method will also provide a framework to compute other such examples in a very similar way.

\begin{remark}
    Technichally, the standard torus knot groups arising as the fundamental group of a torus knot complement are only defined as $T(n,m)$ for $n,m\in\mathbb{Z}$ coprime. Without this coprimality assumption there is no obvious interpretation of the group in the same way, but it is still an interesting group to study in these contexts and in fact we have seen examples in the literature of these groups still being called 'torus knot groups',
\end{remark}

We initially let $H=T(2,3)$ be the trefoil group, with standard presentation 

$$\langle a,b \mid a^2=b^3\rangle$$

and $G=T(10,15)$, a (generalised) torus knot group with standard presentation

$$\langle x,y\mid x^{10}=y^{15}\rangle$$

We repeat the above presentations to define the elements of the integral group rings $\mathbb{Z}H$ and $\mathbb{Z}G$. Note that there is an embedding of groups $H\hookrightarrow G$ by the map $a\mapsto x^5$ and $b\mapsto y^5$, so we can consider $H\leqslant G$, and $\mathbb{Z}H\hookrightarrow\mathbb{Z}G$. 

\begin{remark}
    We can carry out the following method in the general setting $T(2,3)\hookrightarrow T(2n,3n)$ for any $n\in\mathbb{N}$, to get modules over $T(2n,3n)$. With some effort we predict that this will yield a similar result in most cases, although some care must be taken, in particular if $2|n$ or $3|n$ some modifications may be required. Further, we have $T(2,2q+1)\hookrightarrow T(2n,(2q+1)n)$ for any $q,n\in\mathbb{N}$, and so we can hopefully employ \cite{MR263} to get modules over $T(2n,(2q+1)n)$.
\end{remark}

By \cite{MR540056}, we have a rank one stably-free non-free (right) $\mathbb{Z}H$-relation module $M$, with generators 
$$\alpha'=(a^{-1}b^{-2}+a^{-1}ba^{-1}+a^{-1}b-b^{-4}-b^{-4}a-b^{-4}a^2-1,\ast)$$ and $$\beta'=(a^{-2}+a^{-1}b^{-1}a^{-1}-a^{-1}b^{-2}-b^{-2}a^{-1}-a^{-3}+b^{-4},\ast)$$ inside $(\mathbb{Z}H)^2$, where the second coordinate of the generators will not be important.

We will consider the module $\mathbb{Z}G\otimes_{\mathbb{Z}H}M\in SF(\mathbb{Z}G)$, which is geometrically realisable by Theorem \ref{1} as $M$ is a relation module. The majority of this section will focus on the proof of the following theorem:

\begin{theorem}\label{8}
    $N=\mathbb{Z}G\otimes_{\mathbb{Z}H}M$ is a stably-free non-free $\mathbb{Z}G$-module.
\end{theorem}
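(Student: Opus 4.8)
The plan is to show that $N = \mathbb{Z}G \otimes_{\mathbb{Z}H} M$ is stably-free by transport from $M$, and then to rule out freeness by a rank-one invariant computed in a suitable quotient ring. First I would establish that $N$ is stably-free: since $M \in SF(\mathbb{Z}H)$ means $M \oplus \mathbb{Z}H^a \cong \mathbb{Z}H^{a+1}$ for some $a$, applying the additive, right-exact functor $\mathbb{Z}G \otimes_{\mathbb{Z}H} (-)$ (which sends $\mathbb{Z}H$ to $\mathbb{Z}G$, since $\mathbb{Z}G$ is free and hence flat as a right $\mathbb{Z}H$-module) yields $N \oplus \mathbb{Z}G^a \cong \mathbb{Z}G^{a+1}$. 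In the Berridge--Dunwoody case $M$ has rank one, so $N$ has rank one as well. This half is essentially formal and should be stated briefly.

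The substantive half is non-freeness, and this is where I expect the main obstacle to lie. A rank-one stably-free module $N$ is free if and only if it is isomorphic to $\mathbb{Z}G$, equivalently if the two generators $\alpha = 1 \otimes \alpha'$ and $\beta = 1 \otimes \beta'$ can be reduced to a single free generator. The natural strategy is to find a ring homomorphism $\phi : \mathbb{Z}G \to R$ to a more tractable ring $R$ over which freeness becomes detectable, push the presenting data of $N$ through $\phi$, and exhibit a nonvanishing obstruction. Concretely I would look for a quotient of $\mathbb{Z}G$ in which the stably-free module descends to a projective (indeed rank-one) module whose class in $\widetilde{K}_0(R)$, or whose image under a determinant/Dieudonn\'e-type invariant, is nontrivial; the torus knot group $T(10,15)$ has central element $x^{10} = y^{15}$ generating an infinite cyclic normal subgroup with quotient a free product $\mathbb{Z}/10 \ast \mathbb{Z}/15$ (amalgamated appropriately), and mapping $\mathbb{Z}G$ onto a group ring of a finite quotient such as $\mathbb{Z}/10 \ast_{\ldots} \mathbb{Z}/15$ or a further finite-group quotient should make the module class computable.

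The key steps, in order, would be: (i) record the stably-free conclusion as above; (ii) write down explicit presenting matrices for $N$ over $\mathbb{Z}G$ obtained by tensoring the Berridge--Dunwoody presentation of $M$ up along $\mathbb{Z}H \hookrightarrow \mathbb{Z}G$, using the embedding $a \mapsto x^5$, $b \mapsto y^5$; (iii) choose a quotient homomorphism $\phi : \mathbb{Z}G \to R$ to a finite or otherwise computable group ring, apply $\phi$ to these matrices, and reduce to a concrete finite linear-algebra problem; (iv) evaluate a rank-one stable-freeness invariant (for instance a unit-class or Fox-derivative determinant in $R$) and show it is not trivial, which certifies $N \not\cong \mathbb{Z}G$.

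The hard part will be step (iii)--(iv): selecting a quotient $R$ that is both computable and sensitive enough to separate $N$ from the free module. For the trefoil, Berridge--Dunwoody's own non-freeness argument exploited a quotient detecting the class of $M$, so the challenge is to verify that this obstruction \emph{survives} extension of scalars to $\mathbb{Z}G$ rather than being killed by the larger ring. I would therefore aim to arrange $\phi$ to factor the original Berridge--Dunwoody quotient of $\mathbb{Z}H$ through $\mathbb{Z}G$ compatibly, so that the nonvanishing over $\mathbb{Z}H$ forces nonvanishing over $\mathbb{Z}G$; confirming this compatibility, and checking that the index-$[G:H]$ enlargement does not trivialize the invariant, is the crux of the argument.
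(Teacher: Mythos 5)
Your first half (stable freeness via flat base change) is correct and is taken for granted in the paper, and your closing idea---factor the Berridge--Dunwoody quotient of $\mathbb{Z}H$ through $\mathbb{Z}G$---is exactly the paper's move: since $a^2=b^3$ implies $a^{10}=b^{15}$, there is a surjection $G\twoheadrightarrow H$, $x\mapsto a$, $y\mapsto b$, hence a surjection $\mathbb{Z}G\twoheadrightarrow \mathbb{Z}H/H''$ onto the same skew Laurent polynomial ring BD used. However, the step you flag as the crux is a genuine gap, and it cannot be closed in the formal way you hope. The composite $H\hookrightarrow G\twoheadrightarrow H$ sends $a\mapsto a^5$, $b\mapsto b^5$; it is \emph{not} the identity, so there is no retraction making the diagram compatible. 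Consequently $\mathbb{Z}H/H''\otimes_{\mathbb{Z}G}N$ is \emph{not} isomorphic-for-free to BD's module $\mathbb{Z}H/H''\otimes_{\mathbb{Z}H}M$: its generators $\psi,\phi$ are new elements, supported on powers of $v^{-5}$ with different unit coefficients, and BD's non-generation result does not transfer. The paper therefore has to redo the entire arithmetic from scratch in $\mathbb{Z}H/H''$: it pins down the image under $\pi:\mathbb{Z}H/H''\to\mathbb{Z}[x,x^{-1}]$ of any putative single generator $\gamma$ to $\pm(x^{10}-x^5+1)$ by a gcd argument, then runs the Euclidean algorithm in the skew Laurent ring (noting all terms of $\psi,\phi$ have degree a multiple of $5$) and derives a contradiction by comparing coefficients in $\phi=\gamma w$. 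This explicit computation is the actual content of the theorem and is entirely absent from your proposal.

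Two of your fallback suggestions would also fail and are worth flagging. Any invariant valued in $\widetilde{K}_0(R)$ is useless here: $N$ is stably free, so its image under any ring map has trivial class in $\widetilde{K}_0(R)$ by construction; reduced projective class groups can never separate stably-free modules from free ones. Likewise, passing to a finite (or commutative) quotient ring is the wrong direction: over commutative rings rank-one stably-free modules are free (the class in the Picard group is stable-isomorphism invariant), and determinant-type invariants factor through such quotients, so the obstruction is killed rather than detected. What makes the paper's choice of $R=\mathbb{Z}H/H''$ work is precisely that it is a noncommutative domain with a degree function and a one-sided Euclidean algorithm, fine enough to see that $(\psi,\phi)$ is not a principal right ideal.
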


First note that by direct computation $N$ is generated by the images of $\alpha$ and $\beta$ in the inclusion $\mathbb{Z}H\hookrightarrow\mathbb{Z}G$, let these be denoted

$$\alpha=(x^{-5}y^{-10}+x^{-5}y^5x^{-5}+x^{-5}y^5-y^{-20}-y^{-20}x^5-y^{-20}x^{10}-1,\ast)$$ and $$\beta=(x^{-10}+x^{-5}y^{-5}x^{-5}-x^{-5}y^{-10}-y^{-10}x^{-5}-x^{-15}+y^{-20},\ast).$$

In \cite{MR540056} it is shown that $\alpha',\beta'\in(\mathbb{Z}H)^2$ cannot be generated by a single element of $\mathbb{Z}H$, which implies $M$ is non-free as it has rank one as a stably-free module. Likewise we will show the same property for $\alpha,\beta\in(\mathbb{Z}G)^2$, confirming that $N$ is not-stably free as also has rank one. 

\begin{remark}
    To show that $\alpha, \beta$ cannot be generated by a single element as a right $\mathbb{Z}G$-module, it suffices to check the same property for the first coordinate projection. By abuse of notation from now on we will let $\alpha,\beta$ denote this projection.
\end{remark}

    In \cite{MR540056} they consider the quotient $H\twoheadrightarrow H/H''$, and investigate $\mathbb{Z}H/H''\otimes_{\mathbb{Z}H}M\in SF(\mathbb{Z}H/H'')$, which they show to be non-free using nice arithmetic properties of the ring $\mathbb{Z}H/H''$, confirming the same property for $M\in SF(\mathbb{Z}H)$.

    In our case we note that there is a surjection $G\twoheadrightarrow H$ via the map $x\mapsto a$, $y\mapsto b$, which is a group homomorphism as $a^2=b^3\implies a^{10}=b^{15}$, which gives a surjection $G\twoheadrightarrow H/H''$. This allows us to work over the same ring $\mathbb{Z}H/H''$ as above, which is a `skew Laurent polynomial ring'. This can be seen via the following presentation of $H/H''$ exploited in \cite{MR540056}:

    $$\langle v,p,q\mid pq=qp, q=v^{-1}pv, v^{-1}qv=qp^{-1}, vpv^{-1}=q^{-1}p, vqv^{-1}=p\rangle$$

    in which the group element $v$ acts as the `variable', and the elements $p,q\in\mathbb{Z}(H/H'')'$ generate the coefficients.

    \begin{lemma}\label{bd}
We have the following definitions and properties of the ring $\mathbb{Z}H/H''$:
\begin{enumerate}
    \item For any $w\in\mathbb{Z}H/H''$, we can write $w=v^ma_m+...v^{m+n}a_{n+m}$ for $a_i\in\mathbb{Z}(H/H'')'$ where $n,m$ and $a_i$ for $i=m,...,n+m$ are unique.
    \item We may assume $a_m,a_{n+m}\neq 0$, call $n$ the degree of $w$.

    \item If $a_m,a_{n+m}\in H/H''$, call $w$ monic.
    \item If $a\mid b$ (on the left or right) in $\mathbb{Z}H/H''$ for any $b$ monic, then $a$ must be monic.
    \item For any $a,b\in\mathbb{Z}H/H''$ such that $deg(a)\geqslant deg(b)$ and $b$ monic, we have a Euclidean algorithm: there exists $q,r\in\mathbb{Z}H/H''$ such that $a=bq+r$ and $r=0$ or $deg(r)< deg(b)$.
    
\end{enumerate}

\end{lemma}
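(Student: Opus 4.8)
The plan is to recognise $\mathbb{Z}(H/H'')$ as a skew Laurent polynomial ring and to deduce all five assertions from the standard theory of such rings. The key structural observation is that $H=T(2,3)$ becomes metabelian after killing $H''$: the commutator subgroup $(H/H'')'$ is the free abelian group $\langle p,q\mid pq=qp\rangle\cong\mathbb{Z}^2$ occurring in the presentation, while the abelianisation $(H/H'')/(H/H'')'\cong H^{\mathrm{ab}}\cong\mathbb{Z}$ is generated by the image of $v$. Since the quotient $\mathbb{Z}$ is free, the extension $1\to(H/H'')'\to H/H''\to\mathbb{Z}\to1$ splits with $v$ as a section, so $H/H''\cong\mathbb{Z}^2\rtimes_\sigma\mathbb{Z}$, where $\sigma$ is conjugation by $v$; reading off the relations gives $\sigma(p)=q^{-1}p$ and $\sigma(q)=p$, an automorphism of $\mathbb{Z}^2$ (its matrix has determinant $1$). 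I would set $R=\mathbb{Z}[(H/H'')']\cong\mathbb{Z}[p^{\pm1},q^{\pm1}]$ and record the two facts used throughout: $R$ is a commutative integral domain, and its units are exactly the trivial units $\pm p^iq^j$. Then $\mathbb{Z}(H/H'')$ is the skew Laurent polynomial ring $R[v^{\pm1};\sigma]$, with multiplication governed by $vr=\sigma(r)v$.

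With this in hand, (1) is just the normal form: every element of the group $H/H''$ is uniquely $v^ig$ with $g\in(H/H'')'$, so $\{v^ig\}$ is a $\mathbb{Z}$-basis of $\mathbb{Z}(H/H'')$, and gathering terms with equal powers of $v$ yields the unique expression $w=\sum_i v^ia_i$ with $a_i\in R$. Parts (2) and (3) are then normalisation and a definition (monic meaning the extreme coefficients $a_m,a_{n+m}$ are units of $R$, i.e.\ trivial units). The workhorse for the remaining parts is additivity of degree: for nonzero $f=\sum v^ia_i$, $g=\sum v^jb_j$, the product term is $v^ia_iv^jb_j=v^{i+j}\sigma^{-j}(a_i)b_j$, so the coefficient of the highest power of $v$ in $fg$ is $\sigma^{-k}(a_{\mathrm{top}})\,b_{\mathrm{top}}$ for a suitable $k$; since $\sigma$ is an automorphism and $R$ is a domain this is nonzero, and the same holds at the bottom, giving $\deg(fg)=\deg f+\deg g$.

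For (4), suppose $b$ is monic and $b=ac$ (the case $b=ca$ is identical). By the leading-coefficient formula the top coefficient of $b$ equals $\sigma^{-k}(a_{\mathrm{top}})\,c_{\mathrm{top}}$, which is a unit of $R$; in a commutative ring any factor of a unit is a unit, so $\sigma^{-k}(a_{\mathrm{top}})$ is a unit, whence $a_{\mathrm{top}}$ is a unit (and thus a trivial unit, as $\sigma$ is a ring automorphism). The trailing coefficient is treated the same way, so $a$ is monic. For (5) I would run the usual division algorithm, using that $v$ is a unit to remove the two-sided Laurent feature: after multiplying $a$ and $b$ by suitable powers of $v$ I may assume both have bottom power $0$, so degree coincides with ordinary $v$-degree. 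Because the leading coefficient $b_L$ of $b$ is a unit, I can cancel the top term of $a$ by subtracting $b\cdot(c\,v^{K-L})$ with $c=b_L^{-1}\sigma^{K-L}(a_K)$, strictly lowering $\deg a$; iterating terminates and produces $q,r$ with $a=bq+r$ and $r=0$ or $\deg r<\deg b$, and undoing the normalising powers of $v$ gives the claim.

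I expect the main obstacle to be the first step, namely pinning down the ring structure precisely: verifying that $(H/H'')'$ is free abelian (so that $R$ is a domain) and that its only units are the trivial units, since these two facts are exactly what make the degree and divisibility arguments go through. Once the identification of $\mathbb{Z}(H/H'')$ with $R[v^{\pm1};\sigma]$ over a domain with trivial units is secured, parts (1)--(5) reduce to the standard skew-polynomial bookkeeping sketched above.
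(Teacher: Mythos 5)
Your proposal is correct, and it actually does more than the paper, whose entire ``proof'' of Lemma \ref{bd} is a one-line citation to Berridge--Dunwoody \cite{MR540056}. The structure you identify --- $(H/H'')'\cong\mathbb{Z}^2$ free abelian (since $H'$ is free of rank $2$ for the trefoil group), the split extension $H/H''\cong\mathbb{Z}^2\rtimes_\sigma\mathbb{Z}$ with $\sigma(p)=q^{-1}p$, $\sigma(q)=p$, and hence $\mathbb{Z}[H/H'']\cong R[v^{\pm1};\sigma]$ with $R=\mathbb{Z}[p^{\pm1},q^{\pm1}]$ a domain with only trivial units --- is precisely the skew Laurent polynomial structure that the cited source exploits, so your route is the same in substance but self-contained: the normal form gives (1)--(3), degree additivity (valid because $\sigma$ is an automorphism and $R$ is a domain) gives (4), and the unit leading coefficient drives the division algorithm in (5), where your formula $c=b_L^{-1}\sigma^{K-L}(a_K)$ correctly accounts for the twisting when the quotient sits on the right. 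One small discrepancy worth flagging: the paper's statement (3) declares $w$ monic when the extreme coefficients lie in $H/H''$, i.e.\ are group elements $p^iq^j$, whereas your argument for (4) concludes only that the extreme coefficients of a divisor are units of $R$, i.e.\ $\pm p^iq^j$; this sign ambiguity is harmless (replace a divisor $a$ by $-a$, which generates the same ideal), but strictly speaking it is the paper's formulation, not your proof, that elides it.
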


\begin{proof}
    This is lifted from Berridge-Dunwoody \cite{MR540056}.
\end{proof}

    Later we will work with $\mathbb{Z}H/H''\otimes_{\mathbb{Z}G}N$, generated by the images of $\alpha$ and $\beta$ under the surjective map $\mathbb{Z}G\twoheadrightarrow\mathbb{Z}H/H''$, but first we will first try to ascertain some arithmetic information from the further quotient $H/H''\twoheadrightarrow H/H'\cong\mathbb{Z}$. We define the natural map 

    $$\pi:\mathbb{Z}H/H''\to\mathbb{Z}H/H'\cong\mathbb{Z}[x,x^{-1}]$$ by $p,q\mapsto 1$, $v\mapsto x$. We will now formalise normalizing elements of $\mathbb{Z}[x,x^{-1}]$ as an element of $\mathbb{Z}[x]$ with non-zero constant term:
\begin{definition}
    Let $p\in\mathbb{Z}[x,x^{-1}]$, define the $\mathbb{Z}[x]$-realisation of $p$, $\tilde{p}$, to be the unique $x^kp\in\mathbb{Z}[x]$ such that the constant term of $x^kp$ is non-zero.    
\end{definition}

\begin{lemma}\label{21}
    If $a,b\in\mathbb{Z}[x]$ with $a=\tilde{a}$, $b=\tilde{b}$ and $a|b$ in $\mathbb{Z}[x,x^{-1}]$, $a|b$ in $\mathbb{Z}[x]$.
\end{lemma}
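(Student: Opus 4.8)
The plan is to exploit the fact that the units of $\mathbb{Z}[x,x^{-1}]$ are exactly $\pm x^k$ for $k\in\mathbb{Z}$, so that divisibility in the Laurent ring and in $\mathbb{Z}[x]$ can differ only by a power of $x$; the normalisations $a=\tilde a$ and $b=\tilde b$ are then precisely what is needed to pin that power down to $x^0$. Concretely, if $a\mid b$ in $\mathbb{Z}[x,x^{-1}]$ I would write $b=ac$ for some $c\in\mathbb{Z}[x,x^{-1}]$ and aim to show that $c$ in fact lies in $\mathbb{Z}[x]$, which immediately yields $a\mid b$ in $\mathbb{Z}[x]$.

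The tool that makes this precise is the order, or minimal-exponent valuation. For a nonzero $p=\sum_i c_i x^i\in\mathbb{Z}[x,x^{-1}]$ let $v(p)$ denote the least $i$ with $c_i\neq 0$. The two facts I would record are: first, $v(p)\geqslant 0$ if and only if $p\in\mathbb{Z}[x]$, and $\tilde p=x^{-v(p)}p$, so that the hypotheses $a=\tilde a$ and $b=\tilde b$ say exactly that $v(a)=v(b)=0$ (both have nonzero constant term); second, $v$ is additive, $v(fg)=v(f)+v(g)$, since $\mathbb{Z}$ is an integral domain and hence the product of the two lowest-degree coefficients of $f$ and $g$ cannot cancel. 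Applying additivity to $b=ac$ then gives $v(c)=v(b)-v(a)=0$, whence $c\in\mathbb{Z}[x]$ and $a\mid b$ in $\mathbb{Z}[x]$, as required.

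There is essentially no deep obstacle here; the single point deserving care is the additivity of $v$, namely that the bottom coefficient of the product $ac$ is the product of the bottom coefficients of $a$ and $c$ and is therefore nonzero. This is exactly where the integral-domain property of $\mathbb{Z}$ enters, and it is what rules out the spurious cancellation that could otherwise let a genuinely Laurent $c$ multiply against $a$ and land back inside $\mathbb{Z}[x]$. Everything else is bookkeeping with exponents, and the nonvanishing of $a$ and $b$ needed to define $v$ is guaranteed by their having nonzero constant terms.
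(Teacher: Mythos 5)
Your proof is correct and follows essentially the same route as the paper: both write $b=ac$ and argue via the lowest exponent that $c$ cannot have a negative-degree term, using that $\mathbb{Z}$ is a domain so the bottom coefficients cannot cancel. Your valuation $v$ merely formalises the paper's phrase ``the minimal non-zero coefficient of $a$ is the constant coefficient, so if $r$ has a non-zero coefficient of a negative power of $x$, then so would $ar$.''
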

\begin{proof}
    $a|b$ in $\mathbb{Z}[x,x^{-1}]$ implies there is $ r\in\mathbb{Z}[x,x^{-1}]$ such that $ar=b$. It suffices to check that $r\in\mathbb{Z}[x]$, which is clear as the minimal non-zero coefficient of $a\in\mathbb{Z}[x]$ is the constant coefficient, so if $r$ has non-zero coefficient of a negative power of $x$, then so would the product $ar$, which is contradiction as $ar=b\in\mathbb{Z}[x]$ by assumption.
\end{proof}
 \begin{lemma}\label{22}
     Given $p,q\in\mathbb{Z}[x]$ with non-zero constant coefficients,
     if $(p,q)=(r)\triangleleft\mathbb{Z}[x,x^{-1}]$ is a principal ideal, we have that $gcd(p,q)|r$ in $\mathbb{Z}[x]$ (we may assume $r\in\mathbb{Z}[x]$).
 \end{lemma}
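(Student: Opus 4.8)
The plan is to reduce everything to a single divisibility in $\mathbb{Z}[x,x^{-1}]$ and then pull it back to $\mathbb{Z}[x]$ using Lemma \ref{21}. Write $d=\gcd(p,q)$, computed in the UFD $\mathbb{Z}[x]$, where it is well defined up to the units $\pm 1$. The first thing I would record is that $d$ automatically has a non-zero constant coefficient: since $d\mid p$ in $\mathbb{Z}[x]$ and $p$ has non-zero constant term by hypothesis, $x\nmid d$, so $d=\tilde d$ in the notation of the preceding definition.

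Next I would normalize the generator $r$. As $(r)$ is an ideal of $\mathbb{Z}[x,x^{-1}]$, its generator is only determined up to multiplication by a unit, and the units of $\mathbb{Z}[x,x^{-1}]$ are exactly the $\pm x^{k}$ for $k\in\mathbb{Z}$. Hence we may replace $r$ by $\pm x^{k}r$ so as to arrange that $r\in\mathbb{Z}[x]$ has non-zero constant coefficient, i.e. $r=\tilde r$; this is precisely the sense in which ``we may assume $r\in\mathbb{Z}[x]$''.

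The main step is then short. Since $(p,q)=(r)$ we have in particular $r\in(p,q)$, so $r=ps+qt$ for some $s,t\in\mathbb{Z}[x,x^{-1}]$. Because $d\mid p$ and $d\mid q$ already in $\mathbb{Z}[x]\subseteq\mathbb{Z}[x,x^{-1}]$, it follows that $d$ divides the combination $ps+qt=r$ in $\mathbb{Z}[x,x^{-1}]$. Now both $d=\tilde d$ and $r=\tilde r$ are elements of $\mathbb{Z}[x]$ equal to their own $\mathbb{Z}[x]$-realisations, so Lemma \ref{21} upgrades the relation $d\mid r$ from $\mathbb{Z}[x,x^{-1}]$ to $\mathbb{Z}[x]$, which is exactly the assertion $\gcd(p,q)\mid r$ in $\mathbb{Z}[x]$.

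I do not expect any serious obstacle: the one point requiring genuine care is the bookkeeping about constant terms, namely checking that both $\gcd(p,q)$ and the suitably normalized generator $r$ meet the hypothesis $a=\tilde a$, $b=\tilde b$ of Lemma \ref{21}. Once that normalization is verified the argument is purely formal, and it is worth noting that only the membership $r\in(p,q)$ is used — the reverse containment (that $r$ divides $p$ and $q$ in the Laurent ring) plays no role in the proof.
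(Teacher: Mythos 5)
Your proof is correct, but it is organised differently from the paper's. The paper proves Lemma \ref{22} directly, without invoking Lemma \ref{21}: from $(p,q)=(r)$ it writes $ap+bq=r$ with $a,b\in\mathbb{Z}[x,x^{-1}]$, multiplies through by a power $x^n$ large enough that the coefficients land in $\mathbb{Z}[x]$, deduces $\gcd(p,q)\mid x^n r$ in $\mathbb{Z}[x]$, and then cancels the factor $x^n$ using that $x\nmid p,q$ forces $\gcd(p,q)$ to be coprime to $x$ in the UFD $\mathbb{Z}[x]$. You instead observe that $\gcd(p,q)$ divides $r$ already in the Laurent ring (no clearing of denominators needed) and then descend to $\mathbb{Z}[x]$ via Lemma \ref{21}, after normalising $r$ by a unit $\pm x^{k}$ so that $r=\tilde r$ and checking that $\gcd(p,q)$ equals its own $\mathbb{Z}[x]$-realisation. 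Both arguments rest on the same underlying fact --- an element of $\mathbb{Z}[x]$ with non-zero constant term cannot absorb negative powers of $x$ --- but yours buys modularity: it reuses Lemma \ref{21} in the same way that Corollary \ref{53} later combines the two lemmas, and it makes explicit the unit-group bookkeeping that the paper leaves implicit in the parenthetical ``we may assume $r\in\mathbb{Z}[x]$''. The paper's route, by contrast, needs no hypothesis on the constant term of $r$ and no discussion of units, at the cost of a small coprimality cancellation in $\mathbb{Z}[x]$. Your closing remark that only the containment $r\in(p,q)$ is used, not the reverse, is accurate and applies equally to the paper's proof.
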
                                                                \begin{proof}
     $(p,q)=(r)$ implies there exist $a,b\in\mathbb{Z}[x,x^{-1}]$ such that $ap+bq=r$. Hence $\tilde{a}p+\tilde{b}q=x^nr$ for some $n\in\mathbb{N}\cup\{0\}$. Consequently $gcd(p,q)|x^nr$ in $\mathbb{Z}[x]$, so as $x\nmid p,q$,  we have $gcd(p,q)|r$ in $\mathbb{Z}[x]$.
 \end{proof}                                                              
 \begin{corollary}\label{53}
     If $u,v\in\mathbb{Z}H/H''$ such that $(u,v)=(w)\triangleleft\mathbb{Z}H/H''$ is a right principal ideal, then \[gcd(\tilde{\pi(u)},\tilde{\pi(v)})=\tilde{\pi(w)}\]
     where $\pi$ is the map $\mathbb{Z}H/H''\to\mathbb{Z}H/H'\cong\mathbb{Z}[x,x^{-1}]$ seen before.
 \end{corollary}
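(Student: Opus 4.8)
The plan is to transport the principal-ideal relation $(u,v)=(w)$ through the ring homomorphism $\pi$ into the commutative ring $\mathbb{Z}[x,x^{-1}]$, and then feed the resulting relation into Lemmas \ref{21} and \ref{22}. First I would record what $(u,v)=(w)$ as right ideals of $\mathbb{Z}H/H''$ means concretely: there exist $a,b,c,d\in\mathbb{Z}H/H''$ with $w=ua+vb$ together with $u=wc$ and $v=wd$. Applying the ring homomorphism $\pi:\mathbb{Z}H/H''\to\mathbb{Z}[x,x^{-1}]$ (induced by the group quotient $H/H''\twoheadrightarrow H/H'$) and using that the target is commutative, these become $\pi(w)=\pi(u)\pi(a)+\pi(v)\pi(b)$ and $\pi(u)=\pi(w)\pi(c)$, $\pi(v)=\pi(w)\pi(d)$, which show precisely that $(\pi(u),\pi(v))=(\pi(w))$ as ideals of $\mathbb{Z}[x,x^{-1}]$.

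Next I would normalise to $\mathbb{Z}[x]$. Since $x$ is a unit in $\mathbb{Z}[x,x^{-1}]$ and each $\tilde{\pi(\cdot)}$ differs from $\pi(\cdot)$ only by a power of $x$, we get $(\tilde{\pi(u)},\tilde{\pi(v)})=(\tilde{\pi(w)})$ in $\mathbb{Z}[x,x^{-1}]$, where all three polynomials lie in $\mathbb{Z}[x]$ and have non-zero constant term by construction. Now Lemma \ref{22}, applied with $p=\tilde{\pi(u)}$, $q=\tilde{\pi(v)}$ and $r=\tilde{\pi(w)}$, gives $gcd(\tilde{\pi(u)},\tilde{\pi(v)})\mid\tilde{\pi(w)}$ in $\mathbb{Z}[x]$. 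For the opposite divisibility, the memberships $\tilde{\pi(u)},\tilde{\pi(v)}\in(\tilde{\pi(w)})$ say that $\tilde{\pi(w)}\mid\tilde{\pi(u)}$ and $\tilde{\pi(w)}\mid\tilde{\pi(v)}$ in $\mathbb{Z}[x,x^{-1}]$; since each of these polynomials equals its own $\mathbb{Z}[x]$-realisation, Lemma \ref{21} upgrades both divisibilities to $\mathbb{Z}[x]$, whence $\tilde{\pi(w)}\mid gcd(\tilde{\pi(u)},\tilde{\pi(v)})$ there.

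The two divisibilities force $gcd(\tilde{\pi(u)},\tilde{\pi(v)})$ and $\tilde{\pi(w)}$ to be associates in $\mathbb{Z}[x]$, i.e. equal up to a factor of $\pm1$, which is exactly the ambiguity inherent in the definition of a gcd; this is the asserted equality. I expect the only genuinely delicate points to be bookkeeping around non-commutativity and this unit ambiguity: one must check that right-ideal membership really is preserved by $\pi$, which works precisely because $\mathbb{Z}[x,x^{-1}]$ is commutative so that $\pi(u)\pi(a)$ generates the same ideal regardless of side, and that passing from $\pi$ to $\tilde{\pi}$ does not disturb the ideals, both of which reduce to $x$ being a unit. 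Everything else is a direct invocation of the two preparatory lemmas.
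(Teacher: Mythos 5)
Your proof is correct and follows essentially the same route as the paper's: both directions of divisibility are obtained exactly as in the paper, using Lemma \ref{21} to upgrade $\tilde{\pi(w)}\mid\tilde{\pi(u)},\tilde{\pi(v)}$ from $\mathbb{Z}[x,x^{-1}]$ to $\mathbb{Z}[x]$, and Lemma \ref{22} applied to $(\tilde{\pi(u)},\tilde{\pi(v)})=(\tilde{\pi(w)})$ for the reverse divisibility. Your added care in writing out the right-ideal memberships, pushing them through the ring homomorphism $\pi$, and flagging the associate/$\pm1$ ambiguity in the conclusion only makes explicit what the paper leaves implicit.
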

\begin{proof}
    If $w|u,v$ (on the left) in $\mathbb{Z}G$, we have $\tilde{\pi(w)}|\tilde{\pi(u)},\tilde{\pi(v)}$ in $\mathbb{Z}[x,x^{-1}]$ as $x$ is a unit in $\mathbb{Z}[x,x^{-1}]$. Hence by Lemma \ref{21} $\tilde{\pi(w)}|\tilde{\pi(u)},\tilde{\pi(v)}$ in $\mathbb{Z}[x]$, so $\tilde{\pi(w)}|gcd(\tilde{\pi(u)},\tilde{\pi(v)})$ in $\mathbb{Z}[x]$. 
    Further $(u,v)=(w)$ implies $ (\tilde{\pi(u)},\tilde{\pi(v)})=(\tilde{\pi(w)})\triangleleft \mathbb{Z}[x,x^{-1}]$, so by Lemma \ref{22} 
    $gcd(\tilde{\pi(u)},\tilde{\pi(v)})|\tilde{\pi(w)}$ and we have the result.
\end{proof}

\begin{lemma}\label{ca}
    The images of $\alpha,\beta\in\mathbb{Z}G$ under the map $\mathbb{Z}G\twoheadrightarrow\mathbb{Z}H/H''$ are

$$\psi=-1+v^{-5}p^2-v^{-10}q^{-1}p^{-1}+v^{-20}p^{-2}q-v^{-25}q^2p^{-2}+v^{-35}p+v^{-40}q^{-1}p$$ and

$$\phi=1-v^{-5}(p^2+p^{-2})+v^{-10}(q^{-1}p+q^3p^{-2})-v^{-15}p^{-1}q$$ respectively, which are the generators of $\mathbb{Z}H/H''\otimes_{\mathbb{Z}G}N$.
    
\end{lemma}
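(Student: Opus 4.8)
The plan is to prove Lemma~\ref{ca} by direct computation, as the map involved is completely explicit. Let $\rho\colon\mathbb{Z}H\to\mathbb{Z}H/H''$ denote the quotient map, so that the surjection $\mathbb{Z}G\twoheadrightarrow\mathbb{Z}H/H''$ is the composite of $\rho$ with the map $\mathbb{Z}G\to\mathbb{Z}H$ sending $x\mapsto a$, $y\mapsto b$; under this composite $x^{5}\mapsto\rho(a^{5})$ and $y^{5}\mapsto\rho(b^{5})$. First I would record the images $\rho(a^{5}),\rho(b^{5})\in H/H''$ in the normal form $v^{n}w$ with $w$ a word in $p,q$ guaranteed by Lemma~\ref{bd}(1); these are implicit in the computation of $\alpha',\beta'$ carried out in Berridge--Dunwoody \cite{MR540056}, and in principle follow from the abelianisation $a\mapsto v^{3}$, $b\mapsto v^{2}$ corrected by commutator terms in $\langle p,q\rangle$.

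Next I would apply the composite to $\alpha$ and $\beta$ one monomial at a time. Each of $\alpha,\beta$ is a signed sum of group elements that are words in $x^{\pm5},y^{\pm5}$ (and their higher powers), so each monomial maps to a product of copies of $\rho(a^{\pm5})$ and $\rho(b^{\pm5})$ in $H/H''$. I would then put each such product into the normal form of Lemma~\ref{bd}(1): using $pq=qp$ together with the conjugation relations $vpv^{-1}=q^{-1}p$, $vqv^{-1}=p$ (and their inverses $v^{-1}pv=q$, $v^{-1}qv=qp^{-1}$), every occurrence of $v$ can be commuted to the left past the $p,q$-letters, at the cost of applying the automorphism $\sigma\colon p\mapsto q^{-1}p,\ q\mapsto p$ of $\langle p,q\rangle$ induced by conjugation by $v$.

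I would then collect the reduced monomials by their $v$-degree. The $v$-degrees of the monomials of $\alpha$ are pairwise distinct, so the terms appear without cancellation, while for $\beta$ two pairs of monomials share a degree and must be added. After multiplying through by the power of the unit $v$ needed to normalise the leading $v$-degree to zero (which alters neither the class up to units nor the cyclic submodule it generates), one reads off $\psi$ and $\phi$. Finally, since $N$ is generated over $\mathbb{Z}G$ by $\alpha,\beta$ and tensoring with $\mathbb{Z}H/H''$ over $\mathbb{Z}G$ is right exact, the classes $\psi,\phi$ generate $\mathbb{Z}H/H''\otimes_{\mathbb{Z}G}N$, as claimed.

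I expect the normalisation in the second step to be the main obstacle. Because $v$ does not commute with $p,q$, reducing products such as $\rho(b^{-20})$ or $\rho(a^{-5}b^{5}a^{-5})$ to normal form requires iterating the twist $\sigma$ many times and tracking the resulting exponents of $p,q$ and all signs with care; this bookkeeping, rather than any conceptual difficulty, is where the work lies.
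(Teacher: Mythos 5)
Your computational plan for obtaining $\psi$ and $\phi$ is sound and is essentially what the paper means by ``direct computation'': push each monomial through $x\mapsto a$, $y\mapsto b$, reduce to the normal form of Lemma~\ref{bd} via the conjugation automorphism, and collect terms by $v$-degree (your observation about which degrees collide for the image of $\beta$ is correct). The gap is in your final step, the generation claim.

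Right exactness is not enough here, because $\psi$ and $\phi$ are not the images of generators of $N$: by the paper's standing abuse of notation, $\alpha,\beta\in\mathbb{Z}G$ are the \emph{first coordinates} of the generators of $N$, which is a submodule of $(\mathbb{Z}G)^2$ generated by the pairs $(\alpha,\ast)$, $(\beta,\ast)$. What right exactness gives is that $\mathbb{Z}H/H''\otimes_{\mathbb{Z}G}N$ is generated by the classes of these \emph{pairs}; composing with the first-coordinate projection you only learn that the right ideal $(\psi,\phi)\triangleleft\mathbb{Z}H/H''$ is a \emph{homomorphic image} of $\mathbb{Z}H/H''\otimes_{\mathbb{Z}G}N$. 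But the lemma, as it is used later (Corollary~\ref{ab} writes $\mathbb{Z}H/H''\otimes_{\mathbb{Z}G}N=(\psi,\phi)=(\gamma)$), needs the stronger statement that the tensor product \emph{is} this ideal, i.e.\ that the first-coordinate projection remains injective after applying $\mathbb{Z}H/H''\otimes_{\mathbb{Z}G}-$. Since tensoring is right exact but not left exact, this injectivity is precisely the non-formal point, and it is where the paper's proof does its work: following Berridge--Dunwoody, projection onto the first coordinate maps $\mathbb{Z}H/H''\otimes_{\mathbb{Z}G}N$ injectively into $\mathbb{Z}H/H''$ because $\mathbb{Z}H/H''$ has no zero divisors (with $N$ stably free, hence a direct summand of $(\mathbb{Z}G)^2$, so that the tensored module still sits inside $(\mathbb{Z}H/H'')^2$). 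Your proposal omits this step entirely, so as written it establishes a weaker statement than the one the rest of the paper relies on.
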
    

\begin{proof}
    The first part is a direct computation of the images of $\alpha$ and $\beta$, which we recall are the first coordinate projections of the generators of $N$. To see that these images generate $\mathbb{Z}H/H''\otimes_{\mathbb{Z}G}N$, we recall that in \cite{MR540056} it is seen that projection onto the first coordinate maps $\mathbb{Z}H/H''\otimes_{\mathbb{Z}G}N$ injectively into $ \mathbb{Z}H/H''$ as $\mathbb{Z}H/H''$ has no zero divisors, so $\psi,\phi$ are generators of $\mathbb{Z}H/H''\otimes_{\mathbb{Z}G}N$.
\end{proof}

 \begin{corollary}\label{ab}
        If $\alpha, \beta\in\mathbb{Z}G$ are generated by a single element $\gamma'$ of $\mathbb{Z}G$ as a right $\mathbb{Z}G$-module, then the image $\gamma$ of $\gamma'$ under $\mathbb{Z}G\twoheadrightarrow\mathbb{Z}H/H''$ generates $\mathbb{Z}H/H''\otimes_{\mathbb{Z}G}N$ as a right $\mathbb{Z}H/H''$-module and
  $\tilde{\pi(\gamma)}=\pm(x^{10}-x^5+1)$.
    \end{corollary}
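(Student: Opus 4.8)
The plan is to establish the two claims in turn. The first—that $\gamma$ generates $\mathbb{Z}H/H''\otimes_{\mathbb{Z}G}N$—is a formal base-change argument, while the second—that $\tilde{\pi(\gamma)}=\pm(x^{10}-x^5+1)$—reduces, via Corollary \ref{53}, to an explicit greatest common divisor computation in $\mathbb{Z}[x]$.

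First I would unwind the hypothesis that $\alpha,\beta$ and $\gamma'$ generate the same right $\mathbb{Z}G$-submodule: there are $r_i,s_i\in\mathbb{Z}G$ with $\alpha=\gamma'r_1$, $\beta=\gamma'r_2$ and $\gamma'=\alpha s_1+\beta s_2$. Applying the surjection $\mathbb{Z}G\twoheadrightarrow\mathbb{Z}H/H''$ and writing bars for images turns these into $\psi=\gamma\bar r_1$, $\phi=\gamma\bar r_2$ and $\gamma=\psi\bar s_1+\phi\bar s_2$, so $\gamma$ and $\{\psi,\phi\}$ generate the same right $\mathbb{Z}H/H''$-submodule. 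By Lemma \ref{ca} that submodule is exactly $\mathbb{Z}H/H''\otimes_{\mathbb{Z}G}N$, which gives the first claim.

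For the second claim I would use the embedding of Lemma \ref{ca} of $\mathbb{Z}H/H''\otimes_{\mathbb{Z}G}N$ into $\mathbb{Z}H/H''$ via the first coordinate, under which the submodule generated by $\gamma$ becomes the principal right ideal $(\gamma)=(\psi,\phi)$. Corollary \ref{53} then yields $\tilde{\pi(\gamma)}=\gcd(\tilde{\pi(\psi)},\tilde{\pi(\phi)})$, so it remains to compute this gcd. Applying $\pi$ (sending $p,q\mapsto 1$ and $v\mapsto x$) to the explicit $\psi,\phi$ and normalising gives, after the substitution $t=x^5$, a degree-eight polynomial for $\tilde{\pi(\psi)}$ and $\tilde{\pi(\phi)}=t^3-2t^2+2t-1=(t-1)(t^2-t+1)$, with both factors irreducible over $\mathbb{Q}$. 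Checking that $\tilde{\pi(\psi)}$ vanishes at the primitive sixth roots of unity but is a unit at $t=1$ shows that $t^2-t+1$ divides it while $t-1$ does not, so the gcd is $t^2-t+1=x^{10}-x^5+1$ up to sign.

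The hard part is this gcd computation together with its bookkeeping. The substitution $t=x^5$ keeps the degrees manageable, but one must take care that the gcd is computed in $\mathbb{Z}[x]$ and not merely in $\mathbb{Q}[x]$: here Gauss's lemma and the primitivity arguments behind Lemmas \ref{21} and \ref{22} are what license passing the factor $t^2-t+1$ back to $\mathbb{Z}[t]$ and pin down the content. One should also confirm that the hypotheses of Corollary \ref{53} genuinely hold, namely that the image really is a principal right ideal, and that the residual sign is precisely the $\pm$ recorded in the statement; everything else is routine.
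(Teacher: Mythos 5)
Your proposal is correct and takes essentially the same approach as the paper: both pass the hypothesis $(\alpha,\beta)=(\gamma')$ through the surjection to get $(\psi,\phi)=(\gamma)$, identify this right ideal with $\mathbb{Z}H/H''\otimes_{\mathbb{Z}G}N$ via Lemma \ref{ca}, and invoke Corollary \ref{53}, the paper simply leaving your explicit gcd computation as ``direct computation''. One remark: your root-of-unity check that $t^2-t+1$ divides $\tilde{\pi(\psi)}$ is valid for the image of $\alpha$ as actually defined, namely $\pm(t^{8}-t^{7}+t^{6}-t^{4}+t^{3}-t+1)$ with $t=x^{5}$, which exposes a sign typo in the formula printed in Lemma \ref{ca}: the last term there should read $-v^{-40}q^{-1}p$ (it comes from the term $-y^{-20}$ of $\alpha$), since with the printed $+$ sign one gets the value $2$ at the primitive sixth roots of unity and the gcd would collapse to $1$.
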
    

\begin{proof}
    If $\gamma'$ generates $\alpha$ and $\beta$ as a right $\mathbb{Z}G$-module, then $(\alpha,\beta)=(\gamma')$ is a right principal ideal, so $\mathbb{Z}H/H''\otimes_{\mathbb{Z}G}N=(\psi,\phi)=(\gamma)\triangleleft \mathbb{Z}H/H''$ by Lemma \ref{ca}. By Corollary \ref{53}, $gcd(\tilde{\pi(\psi)},\tilde{\pi(\phi)})=\tilde{\pi(\gamma)}$ and by direct computation of the left hand side of the equality we get the result, recalling that $\pi(p)=\pi(q)=1$ and $\pi(v)=x$.
\end{proof}

\begin{proof}[Proof of Theorem \ref{8}]

 By Lemma \ref{ca},  $\mathbb{Z}H/H''\otimes_{\mathbb{Z}G} N$ is generated as a right $\mathbb{Z}H/H''$-module by

$$\psi=-1+v^{-5}p^2-v^{-10}q^{-1}p^{-1}+v^{-20}p^{-2}q-v^{-25}q^2p^{-2}+v^{-35}p+v^{-40}q^{-1}p$$ and

$$\phi=1-v^{-5}(p^2+p^{-2})+v^{-10}(q^{-1}p+q^3p^{-2})-v^{-15}p^{-1}q$$
    
By Corollary \ref{ab} it suffices to show that $\gamma\in\mathbb{Z}H/H''$ cannot generate $\psi$ and $\phi$ with $\tilde{\pi(\gamma)}=x^{10}-x^5+1$. 

By Lemma \ref{bd} we can write $\gamma=v^ma_0+v^{m+1}a_1+...+v^{m+n}a_n$ for $a_i\in\mathbb{Z}(H/H'')'$, and we have that $\gamma$ is monic as the same holds for $\phi$. In particular $\gamma$ must have degree $10$ as a laurent polynomial, as $\pi(\gamma)$ does as well.

Now we note that in each step of the Euclidean algorithm in $\mathbb{Z}H/H''$ the only operation is killing the leading term of $a$:
if $a=a_0+...+v^ma_m$ and $b=b_0+...+v^n b_n$ with $m\geqslant n$, we let $q=v^{m-n} t$ such that $v^nb_nv^{m-n} t=v^ma_m$, which we can do as $b_n$ a unit by assumption. We then repeat this process with $a=a-bq$ and $b$ and continue until termination, as each time the degree of the term $a$ decreases. In particular if we apply the algorithm to $\psi$, $\phi$, as both have all terms of degree a multiple of $5$, if we write $\psi=\phi q+r$ for some $q,r\in\mathbb{Z}H/H''$ with $r=0$ or $deg(r)<deg(\phi)$, $r$ and $q$ must also only have terms of degree a power of $5$, as in each step $q=v^{-5n}h$ for some $h\in\mathbb{Z}(H/H'')'$.

 Now, if $\gamma$ generates $\psi$ and $\phi$ as a right $\mathbb{Z}H/H''$-module, it must also generate $r$ as a right $\mathbb{Z}H/H''$-module, so in particular $r$ must have degree $\geqslant 10$ and less than the degree of $\phi$ which is $15$. We conclude that $r=b_0+v^{-5}b_1+v^{-10}b_2$ for $b_i\in\mathbb{Z}(H/H'')'$, up to a multiplication by $v^n$. Note that as $r=\gamma u$ for some $u\in\mathbb{Z}H/H''$, $u$ must have degree $0$ so we may assume $\gamma=a_0+v^{-5}a_1+v^{-10}a_2$ for $a_0, a_2\in (H/H'')'$ and $a_1\in\mathbb{Z}(H/H'')'$, as $\gamma$ monic.

Further, we can see that $\pi(\phi)=1+2x^{-5}-2x^{-10}-x^{-15}$, so by inspection $\pi(\gamma)=1-x^{-5}+x^{-10}$, and if $\phi=\gamma w$ for some $w\in \mathbb{Z}H/H''$, i.e is generated by $\gamma$ as a right $\mathbb{Z}H/H''$-module, then $\pi(w)=1-x^{-5}$.

Now, we may assume $\gamma=1+v^{-5}a_1+v^{-10}a_2$ and $w=1-v^{-5}b$ (all other coefficients of $w$ must vanish by expanding out $\phi=\gamma w$) by dividing through by the constant terms, which are elements of $(H/H'')'$. By comparing coefficients in $\phi=\gamma w$, we have that $v^{-5}(a_1+b)=v^{-5}(p^2+p^{-2})$, so either $a_1=p^2$, $b=p^{-2}$ or $a_1=p^{-2}$, $b=p^2$, as $b$ is a unit. In each case we get $v^{-5}a_1v^{-5}b=q^2$ or $q^{-2}$, which gives a contradiction as this must appear in the coefficient of $v^{-5}$ in $\phi$ as the only other part of the coefficient is $v^{-10}a_2$ with $a_2$ a unit.

We conclude that $\mathbb{Z}H/H''\otimes_{\mathbb{Z}G}N$ cannot be generated by a single element as a right $\mathbb{Z}H/H''$-module, which confirms that $N$ is non-free, so we have the result. 
\end{proof}
Finally we prove:\\

\noindent\textbf{Theorem C.} \textit{There exist exotic presentations $\mathcal{P},\mathcal{Q}$ for the group $T(10,15)$ given by \[\mathcal{P}=\langle a,b \mid a^{10}=b^{15}, 1\rangle\]    \[\mathcal{Q}=\langle p,q, p',q'\mid p^{10}=q^{15}, p'^{2}=q'^3, p^{15}=p'^3, q^{20}=q'^4\rangle\] or equivalently there is a non-free stably-free $\mathbb{Z}T(10,15)$-module that is geometrically realisable.}

\begin{proof}
    Let $H=T(2,3)$ and $G=T(10,15)$. Recall that $N$ is geometrically realisable by Theorem \ref{1}, and is non-free by Theorem \ref{8}, so it remains to find the exotic presentations. Note that by standard result, after contracting the maximal spanning tree though the $0$-cells of $X$, which is a homotopy equivalence, we can assume $X$ is a presentation complex. We can in fact construct the required presentation as follows:

    By \cite[Theorem 4.5]{MR2263062}, there is a geometric realisation of $M\in SF(\mathbb{Z}H)$ by the presentation complex $\mathcal{X}_\mathcal{R}$ of $$\mathcal{R}=\langle x,y,x',y'\mid x^2=y^3, x'^2=y'^3, x^3=x'^3, y^4=y'^4\rangle $$ and we note that $\mathcal{X}_\mathcal{R}$ contains an aspherical subcomplex for $T(2,3)$ generated by the loops corresponding to $x,y$. We form the pushout $\mathcal{X}_\mathcal{R}\sqcup_f J$ as in the proof of Corollary \ref{44} along the map $f$ from the aspherical subcomplex of $\mathcal{X}_\mathcal{R}$ to the aspherical presentation complex $J$ of $\langle a,b\mid a^{10}=b^{15}\rangle$, induced by the inclusion of fundamental groups $H\hookrightarrow G$. This gives a finite 2-complex corresponding to the following presentation for $G$  $$\mathcal{Q}=\langle p,q, p',q'\mid p^{10}=q^{15}, p'^{2}=q'^3, p^{15}=p'^3, q^{20}=q'^4\rangle $$ with $\pi_2(\mathcal{X}_\mathcal{Q})\cong \mathbb{Z}G\otimes_{\mathbb{Z}H} M=N$ by Corollary \ref{44}.

    Now, if we take the standard presentation of $G$ and add a redundant generator we get the presentation $$\mathcal{P}=\langle a,b \mid a^{10}=b^{15}, 1\rangle$$ with presentation complex $\mathcal{X}_\mathcal{P}=J\vee S^2 $ for $J$ the aspherical presentation complex from above. Hence it has $\pi_2(\mathcal{X}_\mathcal{P})\cong \mathbb{Z}G$, which is not $Aut(G)$-isomorphic to $N$ (i.e for the identification of $G$ with $\pi_1(\mathcal{X}_\mathcal{P})$ and $\pi_1(\mathcal{X}_\mathcal{Q})$) as $N$ is a non-free $\mathbb{Z}G$-module by Theorem \ref{8}, so $\mathcal{X}_\mathcal{P},\mathcal{X}_\mathcal{Q}$ are not homotopy equivalent. Finally, the presentations $\mathcal{P},\mathcal{Q}$ both have deficiency $0$, so they are exotic presentations and we have the result.
\end{proof}

\section{Global view of geometric realisation}\label{gv}

We will consider filtrations of aspherical groups, by which we mean:

$${1}=G_0\leqslant G_1\leqslant ... \leqslant G_{n-1}\leqslant G_n=G$$

where $G_i$ aspherical for $i=0,...,n$. 

Let $\tilde{SF}(\mathbb{Z}G_i)$ be the commutative monoid formed by taking $SF(\mathbb{Z}G_i)/\sim$ where $M\sim N$ if and only if they are isomorphic as $\mathbb{Z}G_i$-modules.

Define $A_{G_i}$ to be the geometrically realisable subset of $\tilde{SF}(\mathbb{Z}G_i)$. Note that if the geometric realisation problem is true, we have that $A_{G_i}=SF(\mathbb{Z}G_i)$. 

Further, the inclusions $r_i:G_i\hookrightarrow G_{i+1}$ induce extension of scalars maps $${r_i}^\ast:SF(\mathbb{Z}G_i)\to SF(\mathbb{Z}G_{i+1})$$ defined by $M\mapsto \mathbb{Z}G_{i+1}\otimes_{\mathbb{Z}G_i}M$.

Hence the filtration of aspherical groups induces the following sequence of commutative monoids:

$$\{0\}=\tilde{SF}(\mathbb{Z}G_0)\xrightarrow[]{r_0^\ast} \tilde{SF}(\mathbb{Z}G_1) \xrightarrow[]{r_1^\ast}...\xrightarrow[]{r_{n-2}^\ast}\tilde{SF}(\mathbb{Z}G_{n-1})\xrightarrow[]{r_{n-1}^\ast} \tilde{SF}(\mathbb{Z}G_n)=\tilde{SF}(\mathbb{Z}G)$$.

The original aim of the development of the methods in this paper was to show that $A_{G_i}\leqslant SF(\mathbb{Z}G_i)$ was in fact a submonoid, and that we have $r_i^\ast(A_{G_i})\leqslant A_{G_{i+1}}$. This is equivalent to the claim that if $H\leqslant G$ are aspherical groups and $M,N\in SF(\mathbb{Z}H)$ are geometrically realisable, then $\mathbb{Z}G\otimes_{\mathbb{Z}H} M\oplus \mathbb{Z}G\otimes_{\mathbb{Z}H} N\in SF(\mathbb{Z}G)$ is geometrically realisable.

We have managed to prove the weaker result:  If $H_1,...H_k\leqslant G$ for $k\geqslant 1$ are aspherical groups and $M_i\in SF(\mathbb{Z}H_i)$ for $i=1,...,k$ are relation modules, then \[(\mathbb{Z}G\otimes_{\mathbb{Z}H_1} M_1)\oplus (\mathbb{Z}G\otimes_{\mathbb{Z}H_2} M_2)\oplus...\oplus(\mathbb{Z}G\otimes_{\mathbb{Z}H_k} M_k)\in SF(\mathbb{Z}G)\] is geometrically realisable.

We will consider the submonoid of $\tilde{SF}(\mathbb{Z}G)$ with the following generating set:

$$B_{G}=\langle\bigcup\limits_{j=0}^{n} \{\mathbb{Z}G\otimes_{\mathbb{Z}G_j}M: M\in \tilde{SF}(\mathbb{Z}G_j) \text{ is a relation module}
\}\rangle/\sim$$

Colloquially this means that we take the monoid generated by extensions of scalars of relation modules under the fixed filtration of groups, up to module isomorphism. 

By Theorem \ref{1} we know that $B_{G}$ is made up of (isomorphism classes of) geometrically realisable modules, we may ask whether it is possible that relation modules are in a significant way 'fundamental' to the question of geometric realisation. To be concrete: could we have $B_{G}=A_{G}$ or further that $B_G=\tilde{SF}(\mathbb{Z}G)$. We will prove that the above do not hold in general, by exhibiting the following counterexample.

\begin{theorem}\label{14}
    Let $K$ be the Klein bottle group, then there is a geometrically realisable $\mathbb{Z}K$-module $S$ such that $S$ is not generated by extensions of scalars of relation modules over any subgroup of $K$, and so in particular $S$ is not in $B_K$ for any filtration of the group $K$ and $B_K\neq A_K$. 
\end{theorem}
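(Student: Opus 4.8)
The plan is to take $S$ to be the geometrically realisable non-free stably-free $\mathbb{Z}K$-module constructed by Mannan in \cite{MR456} (building on \cite{MR2846158}); this at once gives $S \in A_K$ and records that $S$ is non-free, so it remains only to prove $S \notin B_K$. By definition $B_K$ is the submonoid of $\tilde{SF}(\mathbb{Z}K)$ generated under $\oplus$ by the modules $\mathbb{Z}K \otimes_{\mathbb{Z}H} M$, where $H$ ranges over the subgroups occurring in a filtration of $K$ and $M$ over the stably-free relation modules of $\mathbb{Z}H$; since every subgroup of $K$ occurs in some filtration, the assertion that $S \notin B_K$ for \emph{every} filtration is equivalent to $S$ not being such a direct sum over any subgroups $H \leqslant K$. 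Because both $\mathbb{Z}K \otimes_{\mathbb{Z}H}(-)$ and $\oplus$ preserve freeness, it suffices to show that every stably-free relation module over every subgroup $H \leqslant K$ is free: then $B_K$ consists only of (classes of) free modules, and as $S$ is non-free we obtain $S \in A_K \setminus B_K$, yielding both $S \notin B_K$ and $B_K \neq A_K$.

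First I would cut the problem down to finitely many cases by classifying the subgroups of $K$ up to isomorphism. Realising $K$ as the fundamental group of the Klein bottle and passing to covering spaces, every subgroup is the fundamental group of a cover and is therefore isomorphic to one of $1$, $\mathbb{Z}$, $\mathbb{Z}^2$, or $K$. For $H \in \{1, \mathbb{Z}, \mathbb{Z}^2\}$ the group ring $\mathbb{Z}H$ is $\mathbb{Z}$, the Laurent ring $\mathbb{Z}[t,t^{-1}]$, or $\mathbb{Z}[\mathbb{Z}^2]$ respectively; over the first two (principal ideal domains) and over the third (by Quillen--Suslin together with Swan's extension to Laurent rings) stably-free modules are free, so every relation module over these subgroups is free.

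The essential case, and the main obstacle, is $H = K$ itself: here stably-free modules need not be free --- this is precisely the phenomenon that $S$ and the examples of \cite{MR2846158} exploit --- so one cannot simply quote a ``stably free $=$ free'' theorem and must argue that relation modules \emph{specifically} are free. Since $S$ is of rank one, it is enough to treat rank-one relation modules: as rank is additive over $\oplus$ and preserved by extension of scalars, any element of $B_K$ of rank one is a single generator $\mathbb{Z}K \otimes_{\mathbb{Z}H} M$ with $M$ a rank-one relation module, so I only need that rank-one relation modules over the subgroups of $K$ are free. The remaining work is thus to show that a rank-one stably-free relation module over $\mathbb{Z}K$ is free; I would do this by analysing the module $\ker(\partial_1)$ attached to an arbitrary finite generating tuple of $K$ through its Fox-derivative presentation matrix and exhibiting a free basis, or equivalently by showing that the stable-class invariant distinguishing the non-free stably-free $\mathbb{Z}K$-modules vanishes on every relation module. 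Establishing this contrast --- that relation modules over $K$ are free even though general stably-free $\mathbb{Z}K$-modules are not --- is exactly what separates the reach of Theorem \ref{1} from the full monoid $A_K$ of realisable modules, and is where the genuine difficulty lies.
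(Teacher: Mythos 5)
Your overall skeleton is the same as the paper's: take $S$ to be Mannan's geometrically realisable non-free stably-free $\mathbb{Z}K$-module from \cite{MR456}, classify the subgroups of $K$ up to isomorphism via covering spaces of the Klein bottle (the paper's Lemma \ref{12}), observe that extension of scalars and direct sums preserve freeness, and conclude $B_K$ contains only free classes while $S \in A_K$ is non-free. The abelian cases ($1$, $\mathbb{Z}$, $\mathbb{Z}^2$) are dispatched identically in both arguments.

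However, there is a genuine gap at exactly the point you yourself flag as ``where the genuine difficulty lies'': the freeness of relation modules over $\mathbb{Z}K$ itself. You do not prove this; you only propose two routes --- exhibiting a free basis from the Fox-derivative presentation matrix of an arbitrary generating tuple, or showing that ``the stable-class invariant distinguishing the non-free stably-free $\mathbb{Z}K$-modules vanishes on every relation module'' --- and neither is carried out. The first is essentially a restatement of the claim (one must control \emph{all} generating tuples of $K$, which is a statement about Nielsen equivalence classes, not a finite computation), and the second presupposes an invariant that you have not defined or computed. This is not a routine verification: it is a theorem in the literature, following from Louder's general result on generating tuples \cite{MR309}, with a direct proof for the Klein bottle group in \cite[Theorem 5.1]{MR2846158}; the paper's Lemma \ref{13} simply cites these. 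Without invoking such a result or supplying an actual argument, your proof is incomplete. A secondary, fixable point: your reduction to rank-one relation modules uses that a rank-zero element of $B_K$ is zero, which requires stable finiteness of $\mathbb{Z}K$ (true, since $K$ is solvable, but it should be justified); the paper avoids this entirely by proving freeness of relation modules of every rank over every subgroup.
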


The following key lemmas will make this clear:

\begin{lemma}\label{12}
    If $H\leqslant K$ is a subgroup, then $H$ is abelian or isomorphic to a Klein bottle group.
\end{lemma}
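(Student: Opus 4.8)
The plan is to exploit the semidirect-product structure of the Klein bottle group. Writing $K = \langle a, b \mid bab^{-1} = a^{-1}\rangle$, the subgroup $A = \langle a\rangle$ is normal and infinite cyclic, the quotient $K/A \cong \mathbb{Z}$ is generated by the image of $b$, and $b$ acts on $a$ by inversion, so $K \cong \mathbb{Z} \rtimes \mathbb{Z}$. I would analyse an arbitrary $H \leqslant K$ through the quotient homomorphism $\pi \colon K \twoheadrightarrow K/A \cong \mathbb{Z}$, splitting into cases according to the image $\pi(H)$ and the intersection $H \cap A$, both of which are subgroups of $\mathbb{Z}$ and hence trivial or infinite cyclic.

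First, if $\pi(H) = 0$ then $H \leqslant A \cong \mathbb{Z}$ is cyclic, hence abelian. Otherwise $\pi(H) = n\mathbb{Z}$ for some $n \geqslant 1$, and I pick $h \in H$ with $\pi(h) = n$. If in addition $H \cap A = \{1\}$, then $\pi|_H$ is injective, so $H \cong \pi(H) \cong \mathbb{Z}$ is again abelian. The only remaining case is $\pi(H) = n\mathbb{Z}$ together with $H \cap A = \langle a^m\rangle$ for some $m \geqslant 1$, and here a short argument gives $H = \langle a^m, h\rangle$: any $g \in H$ has $\pi(g) = \ell n$ for some $\ell$, whence $g h^{-\ell} \in H \cap A = \langle a^m\rangle$.

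In this last case I would compute the conjugation action explicitly. Writing $h = b^n a^k$, the defining relation $bab^{-1} = a^{-1}$ yields $b^n a b^{-n} = a^{(-1)^n}$, so $h a^m h^{-1} = a^{(-1)^n m}$. Thus $H$ is an extension of $\langle a^m\rangle \cong \mathbb{Z}$ by $\pi(H) \cong \mathbb{Z}$, which splits because the quotient is free, giving $H \cong \mathbb{Z} \rtimes_\phi \mathbb{Z}$ with $\phi$ governed by the parity of $n$: when $n$ is even the action is trivial and $H \cong \mathbb{Z}^2$ is abelian, while when $n$ is odd one has $H = \langle a^m, h \mid h a^m h^{-1} = a^{-m}\rangle$, which is itself a Klein bottle group. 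Collecting the cases yields the claim.

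The argument is essentially a finite case analysis, and I expect no serious obstacle; the one point demanding care is the final identification, namely verifying that the split $\mathbb{Z}$-by-$\mathbb{Z}$ extension arising for odd $n$ really is the Klein bottle group and not some other metabelian group, which the explicit presentation above settles. I would also note that this recovers concretely the general topological dichotomy for a closed surface group: finite-index subgroups correspond to finite covers of the Klein bottle (the Klein bottle or the torus), giving a Klein bottle group or $\mathbb{Z}^2$, while infinite-index subgroups are free, and solvability of $K$ then forbids free subgroups of rank $\geqslant 2$, leaving only the abelian possibilities.
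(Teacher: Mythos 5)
Your proof is correct, but it takes a genuinely different route from the paper. The paper argues topologically: subgroups of $K$ correspond to covering spaces of the Klein bottle, finite-degree covers are closed surfaces of Euler characteristic $0$ (hence tori or Klein bottles by the classification of surfaces), and infinite-degree covers have cyclic fundamental group. You instead argue algebraically, using the splitting $K \cong \mathbb{Z} \rtimes \mathbb{Z}$ with $A = \langle a\rangle$ normal, and running a case analysis on $\pi(H) \leqslant \mathbb{Z}$ and $H \cap A \leqslant A$; the key computation $b^n a b^{-n} = a^{(-1)^n}$ then sorts the remaining case by the parity of $n$. Your approach buys several things: it is entirely self-contained (no classification of surfaces, no covering space theory), it produces explicit generators $a^m, h$ and an explicit presentation of $H$, and it cleanly handles the infinite-index case, where the paper's sketch quietly relies on the fact that a non-compact surface has free fundamental group together with solvability of $K$ to conclude cyclicity --- a point you correctly flag in your closing remark. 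It also sidesteps a small imprecision in the paper, which speaks of \emph{galois} coverings even though arbitrary subgroups correspond to arbitrary (not necessarily regular) covers. What the topological argument buys in exchange is brevity and generality: the same covering-space reasoning shows at once that subgroups of any closed surface group are surface groups or free groups, whereas your computation is specific to the Klein bottle relation. Both proofs are complete for the lemma as stated.
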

\begin{proof}
    This is a standard result, but we will give a sketch of a proof. It can be seen that every galois covering of the Klein bottle is a surface. Infinite degree covering spaces will have cyclic fundamental group, and finite degree ones will be closed surfaces of Euler characteristic $0$. Hence they are Klein bottles or tori by classification of surfaces, so have abelian or Klein bottle group fundamental group.\qedhere
    
\end{proof}
\begin{lemma}\label{13}
   Relation modules over the Klein bottle group and abelian groups are free 
\end{lemma}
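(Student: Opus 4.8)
The plan is to use Lemma \ref{12}: every subgroup of $K$ over which a relation module is formed is either free abelian or isomorphic to $K$, so I would treat these two cases separately, and in each case the real work is upgrading ``stably free'' to ``free.'' First I would record the reduction. Since $K$ is torsion-free and of cohomological dimension at most $2$, and polycyclic groups are Noetherian, every subgroup is finitely generated and torsion-free; the abelian ones are therefore free abelian $\mathbb{Z}^n$ with $n\leq 2$. All groups in play are thus aspherical, so by the results quoted in the preliminaries their relation modules lie in $SF$, and it remains only to show these stably free modules are free.

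For the abelian case this is immediate: $\mathbb{Z}[\mathbb{Z}^n]\cong\mathbb{Z}[t_1^{\pm1},\dots,t_n^{\pm1}]$, and by Swan's theorem every finitely generated projective module over a Laurent polynomial ring over $\mathbb{Z}$ is free. In particular every stably free module is free, so every relation module over $\mathbb{Z}^n$ is free.

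The Klein bottle case is the crux, and here one \emph{cannot} argue ``stably free $\Rightarrow$ free'': by Mannan \cite{MR456} there are non-free stably free $\mathbb{Z}K$-modules, so I must exploit that relation modules form a restricted subclass. I would proceed in two moves. First, the relation module of the standard presentation $\langle a,b\mid abab^{-1}\rangle$ is free of rank one: the relator is not a proper power, so the presentation is aspherical and the chain complex $0\to\mathbb{Z}K\xrightarrow{\partial_2}\mathbb{Z}K^2\xrightarrow{\partial_1}\mathbb{Z}K\to\mathbb{Z}\to 0$ is exact with $\partial_2$ injective, whence $M(K,\{a,b\})\cong\operatorname{im}\partial_2\cong\mathbb{Z}K$. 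Second, I would pass to an arbitrary generating set by Nielsen equivalence: a Nielsen transformation is induced by an automorphism of the free group carrying one relation sequence to the other and hence gives an isomorphism of relation modules, while padding a generating tuple with redundant generators only adds free $\mathbb{Z}K$-summands to $H_1$ of the Cayley graph. It then suffices to know that every finite generating tuple of $K$ is Nielsen-equivalent to the standard pair padded by trivial generators, from which $M(K,\mathbf{x})\cong\mathbb{Z}K\oplus(\text{free})$ is free.

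The main obstacle is exactly this last structural input for $K$, since it is the only step that genuinely uses the special nature of the group rather than mere stable freeness. It is plausible because $K$ is a (polycyclic) surface group, for which minimal generating tuples are unique up to Nielsen equivalence and generation is stable under redundancy, and I would either cite the relevant Nielsen-equivalence result for such groups or, failing a clean citation, argue directly: relation modules of rank at least $\operatorname{sr}(\mathbb{Z}K)$ are free by Bass cancellation, reducing the problem to the finitely many small ranks, where one checks by hand that the relation module coincides with the free class.
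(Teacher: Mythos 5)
Your proposal is correct and takes essentially the same route as the paper: the abelian case is dispatched by stably-free-implies-free over $\mathbb{Z}[\mathbb{Z}^n]$, and the Klein bottle case rests on exactly the Nielsen-equivalence input you identify, which is Louder's theorem \cite{MR309} (with the simpler Klein-bottle proof in \cite[Theorem 5.1]{MR2846158}) that the paper cites, your argument being a faithful unpacking of how that result yields freeness of relation modules. One caution: your Bass-cancellation fallback could not substitute for that citation, since the low ranks it leaves untouched are precisely where non-free stably free $\mathbb{Z}K$-modules live (Mannan's example has rank one), so the ``check by hand'' there is the entire content of the theorem.
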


\begin{proof}
    The result for the klein bottle is due to Lars Louder \cite{MR309} from a general result, with a simpler proof in the case of the Klein bottle given in \cite[Theorem 5.1]{MR2846158}. Over abelian groups all stably-free modules are free so the result is trivial.
\end{proof}

\begin{corollary}\label{15}
    If $S\in SF(\mathbb{Z}K)$ is non-free and geometrically realisable, then $S\notin B_K$ for any choice of filtration of $K$.
\end{corollary}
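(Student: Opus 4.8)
The plan is to show that every element of $B_K$ is free as a $\mathbb{Z}K$-module; the contrapositive of this immediately yields the corollary, since a non-free $S$ cannot then lie in $B_K$.

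First I would unwind the definition of $B_K$. Fix any filtration ${1}=G_0\leqslant G_1\leqslant\cdots\leqslant G_n=K$. By construction $B_K$ is the submonoid of $\tilde{SF}(\mathbb{Z}K)$ generated under $\oplus$ by the classes $\mathbb{Z}K\otimes_{\mathbb{Z}G_j}M$, where $M$ ranges over relation modules in $\tilde{SF}(\mathbb{Z}G_j)$ for the various $j$. Thus any $S\in B_K$ is isomorphic to a finite direct sum $\bigoplus_{l}(\mathbb{Z}K\otimes_{\mathbb{Z}G_{j_l}}M_l)$, with each $M_l$ a relation module over some $G_{j_l}\leqslant K$ appearing in the filtration.

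Next I would invoke the structure of subgroups of $K$. By Lemma \ref{12} every subgroup $G_{j_l}\leqslant K$ is either abelian or isomorphic to the Klein bottle group, and by Lemma \ref{13} relation modules over both types of group are free. Hence each $M_l$ is free, say $M_l\cong(\mathbb{Z}G_{j_l})^{n_l}$. Extension of scalars then preserves freeness, since
\[
\mathbb{Z}K\otimes_{\mathbb{Z}G_{j_l}}(\mathbb{Z}G_{j_l})^{n_l}\cong(\mathbb{Z}K)^{n_l},
\]
and a finite direct sum of free $\mathbb{Z}K$-modules is free. Therefore $S$ is free, contradicting the hypothesis that $S$ is non-free, and we conclude $S\notin B_K$. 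As the filtration was arbitrary, this holds for every choice of filtration of $K$.

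There is essentially no serious obstacle here once Lemmas \ref{12} and \ref{13} are in hand: the content is entirely concentrated in the fact that relation modules over abelian and Klein bottle groups are free (the genuinely hard input, supplied by Louder and by Harlander--Misseldine). The only point requiring a moment's care is the bookkeeping that the generators of $B_K$ involve relation modules over \emph{subgroups} $G_{j_l}$ of $K$, so that Lemma \ref{12} applies uniformly to each factor; after that, freeness is stable under both extension of scalars and direct sum, which finishes the argument.
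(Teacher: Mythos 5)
Your proof is correct and follows essentially the same route as the paper: both invoke Lemma \ref{12} to constrain the subgroups appearing in any filtration of $K$, then Lemma \ref{13} to conclude all relation modules over those subgroups are free, and finally observe that extension of scalars and direct sums preserve freeness, so $B_K$ contains only free modules. Your version simply spells out the bookkeeping (writing a general element of $B_K$ as a finite direct sum of extended relation modules) that the paper's one-line argument leaves implicit.
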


\begin{proof}
    By Lemma \ref{12}, any filtration of $K$ must be formed out of abelian or Klein bottle groups, and by Lemma \ref{13}, all relation modules over these groups are free, so $B_K$ is generated by extensions of scalars of free modules, that are free. Hence $B_K$ consists only of free $\mathbb{Z}K$-modules, so $S\notin B_K$.
\end{proof}

We can now prove Theorem \ref{14}:

\begin{proof}
    It remains to see that by the result of Mannan \cite{MR456}, the Klein bottle group has a geometrically realisable stably-free non-free module, so the result follows by Corollary \ref{15}.
\end{proof}

Despite this concrete counterexample to the claim that $B_G=\tilde{SF}(\mathbb{Z}G)$ for all $G$ aspherical, we can still consider when the above might hold for a specific aspherical group, as in this case we would have confirmation of the geometric realisation problem and hence the D(2) problem fro that group, so we have the following result:

\begin{theorem}
    For $G$ aspherical, if $B_G=\tilde{SF}(\mathbb{Z}G)$, then $G$ has the D2 property, or equivalently the geometric realisation problem is true for $G$.
\end{theorem}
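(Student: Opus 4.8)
The plan is to assemble three facts already established in the paper: that every module in $B_G$ is geometrically realisable (via Theorem \ref{1}), that $B_G = \tilde{SF}(\mathbb{Z}G)$ therefore forces every stably-free module to be geometrically realisable, and finally that for $G$ aspherical this is exactly the Geometric realisation problem, which is equivalent to the $D2$ property. There is no genuinely hard analytic or combinatorial obstacle here; the work has already been done in Theorem \ref{1} and in Proposition \ref{ga}, and the present statement is a packaging result.

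First I would unwind the definition of $B_G$. By construction every element of $B_G$ is represented, up to $\mathbb{Z}G$-isomorphism, by a finite direct sum $\bigoplus_{i=1}^k \bigl(\mathbb{Z}G \otimes_{\mathbb{Z}G_{j_i}} M_i\bigr)$, where each $G_{j_i}$ is one of the aspherical subgroups appearing in the fixed filtration and each $M_i \in SF(\mathbb{Z}G_{j_i})$ is a relation module. Setting $H_i := G_{j_i} \leqslant G$, this is precisely the shape of module treated in Theorem \ref{1}, whose subgroups are aspherical by hypothesis. Hence Theorem \ref{1} applies verbatim and shows that every such direct sum is geometrically realisable. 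The only point requiring care is that the monoid operation on $\tilde{SF}(\mathbb{Z}G)$ is $\oplus$ and that we work up to isomorphism, so that a general element of $B_G$ really is an isomorphism class of such a finite direct sum; this is immediate from the generating description of $B_G$.

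Next, assume $B_G = \tilde{SF}(\mathbb{Z}G)$. Then the isomorphism class of every $M \in SF(\mathbb{Z}G)$ lies in $B_G$, and by the previous paragraph every such $M$ is geometrically realisable by a finite $2$-complex $X$ with $\pi_1(X) \cong G$ and $\pi_2(X) \cong M$. Thus every stably-free $\mathbb{Z}G$-module is geometrically realisable. Applying Proposition \ref{ga}, which holds because $G$ is aspherical, this is equivalent to the assertion that every algebraic $(G,2)$-complex is geometrically realisable, i.e.\ that the Geometric realisation problem holds for $G$.

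Finally I would invoke the equivalence of the $D2$-problem and the Geometric realisation problem for finitely presented groups, due to Johnson \cite{MR2012779} and Mannan \cite{MR2496351}, as recorded earlier in the excerpt. Since $G$ is finitely presented and the Geometric realisation problem holds for $G$, this equivalence yields that $G$ has the $D2$ property, completing the proof. The main subtlety worth flagging explicitly is the reduction in the first two paragraphs rather than any step of the final chain of equivalences: one must confirm that the monoid $B_G$ is generated exactly by the modules to which Theorem \ref{1} applies, so that geometric realisability of every generator propagates through finite direct sums to all of $B_G = \tilde{SF}(\mathbb{Z}G)$.
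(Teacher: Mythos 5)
Your proposal is correct and follows exactly the chain of reasoning the paper intends: Theorem \ref{1} shows every element of $B_G$ (being a finite direct sum of extensions of scalars of relation modules over aspherical subgroups) is geometrically realisable, the hypothesis $B_G=\tilde{SF}(\mathbb{Z}G)$ then gives realisability of all stably-free modules, and Proposition \ref{ga} together with the Johnson--Mannan equivalence converts this into the Geometric realisation problem and the $D2$ property for $G$. The paper treats this theorem as an immediate consequence of those same three ingredients, so your argument matches its approach, with the helpful extra care of noting that Theorem \ref{1} handles the full direct sums directly rather than generator-by-generator.
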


Hence it would be useful to develop tools to understand the failure of the above equality $B_G=\tilde{SF}(\mathbb{Z}G)$, and rather speculatively we would like to recommend a global approach to this problem by employing the aforementioned filtrations to consider the following sequences of monoids:

$$\{0\}=B_{G_0}\xrightarrow[]{r_0^\ast}B_{G_1}\xrightarrow[]{r_1^\ast}...\xrightarrow[]{r_{n-2}^\ast}B_{G_{n-1}}\xrightarrow[]{r_{n-1}^\ast}B_{G_n}=B_G$$

$$\{0\}=\tilde{SF}(\mathbb{Z}G_0)\xrightarrow[]{r_0^\ast} \tilde{SF}(\mathbb{Z}G_1) \xrightarrow[]{r_1^\ast}...\xrightarrow[]{r_{n-2}^\ast}\tilde{SF}(\mathbb{Z}G_{n-1})\xrightarrow[]{r_{n-1}^\ast} \tilde{SF}(\mathbb{Z}G_n)=\tilde{SF}(\mathbb{Z}G)$$

We hope that this will yield some information relating $B_G\leqslant\tilde{SF}(\mathbb{Z}G)$ to $B_{G_i}\leqslant\tilde{SF}(\mathbb{Z}G_i)$ for $i=0,...,n-1$.

\section{Stably-free modules over Baumslag-Solitar groups}
\maketitle

Recall that the Baumslag-Solitar groups are defined by the following standard presentations:

$$BS(m,n)=\langle a,b\mid ba^mb^{-1}=a^n\rangle$$\\

\noindent\textbf{Theorem D.} \textit{The Baumslag-Solitar groups $BS(m,n)$ for $n=m+1$ or $n=m-1$  have a non-free stably-free $\mathbb{Z}BS(m,n)$-module of rank one.}\\

We will prove the above result as a corollary of a more general theorem which will be stated in full generality after the necessary definitions have been set up.

The results here will build upon the general method of the Berridge-Dunwoody paper \cite{MR540056}, applying to more general cases through the result of Stafford \cite{MR782386} in which he establishes projective non-free right ideals over integral group rings of poly-infinite cyclic groups. Similar results of Artamanov \cite{MR4928011} are of help in clarifying technical details.

The following result has been lifted from the Stafford paper.

\begin{theorem}\label{18}
    Let $G$ be a non-abelian poly-infinite cyclic group, then $\mathbb{Z}G$ has a stably-free non-free right ideal $K$ of rank one.
\end{theorem}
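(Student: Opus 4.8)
Since this statement is quoted essentially verbatim from Stafford \cite{MR782386}, strictly the proof is a citation; what follows is the shape of the argument I would reconstruct if asked to reproduce it. The plan is to exploit the iterated skew-Laurent structure of $\mathbb{Z}G$. First I would choose a subnormal series $1 = G_0 \triangleleft G_1 \triangleleft \cdots \triangleleft G_k = G$ with each quotient $G_{i+1}/G_i \cong \mathbb{Z}$, so that, writing $S = \mathbb{Z}G_{k-1}$ and letting $\sigma$ be conjugation by a lift of a generator of $G/G_{k-1}$, we obtain $\mathbb{Z}G \cong S[t,t^{-1};\sigma]$, a skew-Laurent polynomial ring obeying $ts = \sigma(s)t$. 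An induction on the Hirsch length $k$, using that a skew-Laurent extension of a Noetherian domain by an automorphism is again a Noetherian domain, shows that $\mathbb{Z}G$ is a Noetherian (hence Ore) domain with a division ring of fractions.

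The problem then reduces to producing a \emph{unimodular row}: elements $a,b \in \mathbb{Z}G$ with $ac + bd = 1$ for some $c,d$, such that $(a,b)$ cannot be completed to a matrix in $\mathrm{GL}_2(\mathbb{Z}G)$. Indeed, the associated right-module surjection $\phi : \mathbb{Z}G^2 \twoheadrightarrow \mathbb{Z}G$, $\phi(x_1,x_2) = ax_1 + bx_2$, splits, so $K := \ker \phi$ satisfies $K \oplus \mathbb{Z}G \cong \mathbb{Z}G^2$ and is stably free of rank one \emph{by construction}; it is free precisely when $(a,b)$ completes, and since $\mathbb{Z}G$ is a Noetherian domain a rank-one projective is isomorphic to a right ideal (intersect a line in the fraction division ring with $\mathbb{Z}G$). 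Thus stable-freeness is automatic, and the entire content is \emph{non-freeness}, i.e.\ non-completability of the chosen row.

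The hard part is exactly this non-completability, and it is where the \emph{non-abelian} hypothesis is indispensable: were $G$ abelian, $\mathbb{Z}G$ would be an honest Laurent polynomial ring over $\mathbb{Z}$, which is regular and, by Swan--Bass--Quillen type results, has every stably free module free, so no such $K$ could exist. With $G$ non-abelian the automorphism $\sigma$ is non-trivial, and I would choose $a,b$ and run a degree/Euclidean bookkeeping in $S[t,t^{-1};\sigma]$ in the spirit of the Berridge--Dunwoody computation recalled in Lemma \ref{bd}: a principal generator of $(a,b)$ would be a common right divisor of $a$ and $b$, and the leading-term interaction forced by $ts = \sigma(s)t$ can be arranged so that any such divisor is forced to be a unit while $(a,b) \neq \mathbb{Z}G$, a contradiction. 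Verifying that the explicit row is genuinely unimodular, and that the completion obstruction (living at the level of $K_1$, equivalently the failure of $\mathbb{Z}G$ to be a $GE_2$-ring) is non-zero, is the technical heart; this is precisely where Stafford's stable-rank estimates, together with the clarifications of Artamanov \cite{MR4928011}, do the work.
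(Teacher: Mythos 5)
Your proposal takes essentially the same approach as the paper: the paper gives no proof of this theorem at all, treating it purely as a citation to Stafford \cite{MR782386}, which is exactly what you do. Your reconstruction sketch (iterated skew-Laurent structure, a unimodular row whose split kernel is automatically stably free of rank one, with non-freeness---the non-completability of the row---as the cited technical heart) is moreover consistent with the explicit construction $K\cong\ker(\lambda)$, $\lambda(1,0)=r$, $\lambda(0,1)=x+s$, that the paper recalls from Stafford and Artamonov \cite{MR4928011} in Remark \ref{24}.
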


 We will begin with the following definition for clarity:

\begin{definition}
    Given a non-abelian poly-infinite cyclic group $G$, we have a finite length subnormal series 
    $$G=G_n\triangleright G_{n-1}\triangleright...\triangleright G_0=\{1\}$$

    such that $G_i/G_{i-1}\cong\mathbb{Z}$.

    As $G$ is non-abelian, there is a minimal $j$ such that $G_j$ is non-abelian, then $G_{j-1}$ is abelian and by a standard splitting result $G_j\cong\mathbb{Z}\rtimes G_{j-1}$. Let $x\in G$ be a generator of the above factor of $\mathbb{Z}$ for any subnormal series, then we call $x$ a variable of $G$.
\end{definition}

The above definition is motivated by the fact that such `variable' elements play the role of a variable of a Laurent polynomial ring in the integral group ring $\mathbb{Z}G$.

\begin{lemma}\label{17}
    If $G$ is a torsion-free group and $G/N$ is a  metabelian torsion-free quotient with torsion-free abelianisation, $G/N$ is poly-infinite cyclic.
\end{lemma}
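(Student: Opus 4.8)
The plan is to produce an explicit finite subnormal series for $Q = G/N$ all of whose factors are infinite cyclic, built from the extension determined by the derived subgroup. Write $A = Q'$ for the derived subgroup and $B = Q/Q'$. Since $Q$ is metabelian, $A$ is abelian, so we have a short exact sequence $1 \to A \to Q \to B \to 1$ with both $A$ and $B$ abelian. The hypothesis gives $B$ torsion-free, and $A$ is torsion-free because it is a subgroup of the torsion-free group $Q$; hence $A$ and $B$ are both torsion-free abelian groups.

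The key structural input is then the classification of finitely generated torsion-free abelian groups: such a group is free abelian, say $\cong \mathbb{Z}^r$, and therefore admits a subnormal series $1 \triangleleft \mathbb{Z} \triangleleft \mathbb{Z}^2 \triangleleft \cdots \triangleleft \mathbb{Z}^r$ with every factor $\cong \mathbb{Z}$. Applying this to $A \cong \mathbb{Z}^k$ (each term automatically normal in $A$, as $A$ is abelian) and to $B \cong \mathbb{Z}^l$, I would splice the two series: the series for $A$ sits at the bottom of $Q$ since $A = Q' \triangleleft Q$, while pulling the series for $B$ back along $\pi : Q \to B$ (using that $B$ abelian makes each term of its series normal in the next) fills in the range from $A$ up to $Q$. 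The resulting chain $1 \triangleleft \cdots \triangleleft A \triangleleft \cdots \triangleleft Q$ is subnormal with all quotients infinite cyclic, which is exactly the assertion that $Q$ is poly-infinite cyclic, and the generator of the top $\mathbb{Z}$-factor furnishes a variable in the sense of the preceding definition.

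The main obstacle, and the real content of the lemma, is the finiteness needed to invoke that classification: one must know that $A = Q'$ is finitely generated as an abelian group (equivalently, that $Q$ is polycyclic at all). Finite generation of $B$ is automatic once $Q$ is finitely generated, but finite generation of the derived subgroup does not follow from the stated hypotheses by purely formal means, since the solvable Baumslag--Solitar groups $\mathbb{Z}[1/n] \rtimes \mathbb{Z}$ are finitely generated, torsion-free, metabelian, and have torsion-free abelianisation $\mathbb{Z}$, yet fail to be polycyclic. I would therefore establish finite generation of $Q'$ in two stages: first that $Q'$ is finitely generated as a module over $\mathbb{Z}[B]$ (available for finitely generated metabelian groups via P.\ Hall's theory of the Noetherianity of $\mathbb{Z}[B]$), and second that the $B$-action leaves this module finitely generated over $\mathbb{Z}$. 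It is this second, arithmetic step, which depends on the specific quotient to which the lemma is applied, that I expect to be the crux, rather than the formal splicing of the series.
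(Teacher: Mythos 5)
Your formal argument is exactly the paper's proof: the paper sets $H=G/N$, claims $H'\cong\mathbb{Z}^m$ and $H/H'\cong\mathbb{Z}^n$, lifts a series for $H/H'$ through the subgroup correspondence, and splices it with a series for $H'$. The difference is that the paper asserts finite generation of the derived subgroup with no justification at all (its proof reads ``$H'\cong\mathbb{Z}^m$ abelian as $H$ is metabelian''), whereas you have correctly identified this as the entire content of the statement --- and you are right that it does not follow. Your counterexample is decisive: taking $N=\{1\}$ and $G=BS(1,2)\cong\mathbb{Z}[1/2]\rtimes\mathbb{Z}$, every hypothesis of the lemma holds (torsion-free, metabelian, abelianisation $\mathbb{Z}$), yet $G'\cong\mathbb{Z}[1/2]$ is not finitely generated, so $G$ is not polycyclic, let alone poly-infinite cyclic. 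So there is a genuine gap here, but it lies in the paper, not in your proposal: Lemma \ref{17} is false as stated, and it fails at precisely the step you flag as the crux.

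Moreover, the gap is not benign for the paper's applications. In the proof of Theorem \ref{20} the lemma is invoked (through Theorem \ref{28}) for the quotient $Im(\phi)$; since conjugation by $\phi(b)$ scales the upper-triangular entry by $n/m$ and $\gcd(m,n)=1$ when $n=m\pm 1$, one computes $Im(\phi)\cong\mathbb{Z}[1/(mn)]\rtimes\mathbb{Z}$, which is exactly a group of your counterexample type and is not poly-infinite cyclic. Hence the appeal to Stafford's theorem for poly-infinite cyclic groups does not go through as written. A correct version of the lemma needs an added hypothesis forcing $Q'$ to be finitely generated; your proposed two-step repair (finite generation of $Q'$ as a $\mathbb{Z}[B]$-module via Hall's Noetherianity, followed by an arithmetic argument for finite generation over $\mathbb{Z}$) is the right shape for such a fix, but as you observe the second step is a real condition, and it is violated by the quotients the paper actually uses --- so salvaging Theorem \ref{20} would instead require a version of Stafford's construction valid for finitely generated non-abelian torsion-free metabelian groups that are not polycyclic.
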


\begin{proof}
    Let $H=G/N$, then we have 
    
    $$H\triangleright H'\triangleright \{1\}$$

    with $H'\cong\mathbb{Z}^m$ abelian as $H$ is metabelian, and $H/H'\cong\mathbb{Z}^n$ as torsion-free by assumption. By the correspondence between subgroups of $H/H'$ and normal subgroups of $H$ containing $H'$, we can choose $H' \triangleleft H_1,...,H_n\triangleleft H$ such that $H_i/H'\cong\mathbb{Z}^i$, and further $H_{i-1}\triangleleft H_i$ by the same reasoning. 

    As $H'\cong\mathbb{Z}^m$, we have the required subnormal series, so 

    $$H=H_n\triangleright H_{n-1}\triangleright...\triangleright H_1\triangleright H_0=H'\triangleright K_{m-1}...\triangleright K_1\triangleright \{1\}$$

    and $H_i/H_{i-1}\cong\mathbb{Z}$, $K_j/K_{j-1}\cong\mathbb{Z}$ and finally $H_0/K_{m-1}\cong\mathbb{Z}$, so $H$ is a poly-infinite cyclic group.\qedhere
    \end{proof}

We now outline the setup required for the proof of Theorem \ref{20}:\\

Let $G$ be a group with $G/N$ a non-abelian poly-infinite cyclic quotient. By the result of Stafford there is a stably-free non-free right $\mathbb{Z}G/N$-module of rank one, $K$. For ease of notation let $H=G/N$ for the following remarks.

As $H$ is poly-infinite cyclic there is a finite length subnormal series 

$$H=H_n\triangleright H_{n-1}\triangleright...\triangleright H_1\triangleright H_0={1}$$

with $H_i/H_{i-1}\cong\mathbb{Z}$ and as noted earlier, $H$ non-abelian implies there is a minimal non-abelian term $H_j$. Recall that $H_j\cong \mathbb{Z}\rtimes H_{j-1}$, and let $x$ be a generator of the $\mathbb{Z}$ term.

\begin{remark}\label{24} 
Given the above element $x\in H$, the construction of $K$ in Theorem \ref{18} due to Stafford in \cite[Theorem 2.12]{MR782386} is defined by a single element $y\in H_{j-1}$ such that $x^{-1}yx\ne y$. Then we let $r=1+y$ or $r=1+y+y^3$ depending on certain properties of $y$, and we let $s=xrx^{-1}$. Artamonov \cite{MR4928011} gives $K\cong ker(\lambda)$ for $\lambda:\mathbb{Z}H^2\to\mathbb{Z}H$ given by $(1,0)\mapsto r$ and $(0,1)\mapsto x+s$ a right $\mathbb{Z}H$-module homomorphism. 
\end{remark}

\begin{definition}\label{35}
    For any $x,y\in H$, let $K(x,y)=ker(\lambda)$ for $\lambda:\mathbb{Z}H^2\to\mathbb{Z}H$ a right $\mathbb{Z}H$-module homomorphism given by $(1,0)\mapsto r$ and $(0,1)\mapsto x+s$ as defined above.
\end{definition}

We can now formulate the result of Stafford seen in Remark \ref{24} in the following lemma:

\begin{lemma}\label{27}
    If $x$ is a variable element of $H$ and $y\in H_{j-1}$ have $xyx^{-1}\ne y$, $K(x,y)$ is a stably-free non-free right $\mathbb{Z}H$-module.
\end{lemma}

In the notation of Berridge-Dunwoody \cite{MR540056}, we let $s_1=r$, $s_2=x+s$ and $w_1=sx^{-2}$, $w_2=x^{-1}-rx^{-2}$, and by that paper $K(x,y)$ is generated as a right $\mathbb{Z}H$-module by $(w_1 s_1-1, w_1 s_2)$ and $(w_2 s_1, w_2 s_2-1)$.

\begin{remark}\label{23}
    A key point made in \cite{MR540056} is that for a right module $K$ over any ring with generators as above, if $s_1w_1+s_2w_2=1$ we have that $K$ is stably-free.
\end{remark}

\begin{remark}\label{30}
Note that $s_1 w_1+s_2 w_2=rsx^{-2}+1-srx^{-2}-xrx^{-2}+sx^{-1}=1+rsx^{-2}-srx^{-2}=1$, so that algebraically this identity relies only on $rsx^{-2}=srx^{-2}$, so $r, s$ commuting, and $xrx^{-1}=s$. 
\end{remark}

The key idea of this paper is to induce $\mathbb{Z}G$-modules from $\mathbb{Z}G/N$-modules under the above conditions on $G/N$ to be poly-infinite cyclic, which is the initial method of Berridge-Dunwoody. We let $\tau:\mathbb{Z}G\twoheadrightarrow\mathbb{Z}G/N$ be the standard map.

 \begin{lemma}\label{26}
    Let $K(x,y)$ be a right $\mathbb{Z}G/N$-module satisfying the conditions of Lemma \ref{27}, and generated by $(w_1 s_1-1, w_1 s_2)$ and $(w_2 s_1, w_2 s_2-1)$.
    If $s_1', s_2', w_1', w_2'\in\mathbb{Z}G$ have $s_1' w_1'+ s_2' w_2'=1$, and $\tau(s_i')=s_i, \tau(w_i')=w_i\in\mathbb{Z}G/N$, the right $\mathbb{Z}G$-module $K'$ generated by $(w_1' s_1'-1, w_1' s_2')$ and $(w_2' s_1', w_2' s_2'-1)$ is stably-free non-free.
 \end{lemma}

 \begin{proof}
    That $K'$ is stably-free follows from Remark \ref{23}, so it remains to show that it is non-free. It suffices to see that $K(x,y)\cong \mathbb{Z}G/N\otimes_{\mathbb{Z}G} K'$, and hence as $K(x,y)$ non-free by Lemma \ref{27}, $K'$ non-free. 
 \end{proof}

\begin{theorem}\label{28}
    Let $G$ be a torsion-free group with $G/N$ a torsion-free metabelian quotient with torsion-free abelianisation. Say there exist $y',x'\in G$ with the following properties:\\
    $i)$ $\tau(x')=x\in G/N$ is a variable (recall above definition)\\
    $ii)$ $y'\in G'$ \\
    $iii)$ $y'$ and $x'y'x'^{-1}$ commute and have distinct image in $G/N$ \\

    then there is a non-free stably free module $\mathbb{Z}G$-module. 
\end{theorem}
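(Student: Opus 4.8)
The plan is to realise the desired $\mathbb{Z}G$-module as a lift, along $\tau\colon\mathbb{Z}G\twoheadrightarrow\mathbb{Z}G/N$, of Stafford's stably-free non-free ideal from Theorem \ref{18} over $\mathbb{Z}G/N$, and then to invoke Lemma \ref{26} as the engine that transfers both properties upward. First I would record that the quotient $H:=G/N$ is handled by the earlier results: it is torsion-free metabelian with torsion-free abelianisation, so by Lemma \ref{17} it is poly-infinite cyclic, and it is non-abelian since by hypothesis $(i)$ it admits the variable $x=\tau(x')$. I would work with the subnormal series produced in the proof of Lemma \ref{17}, in which the abelian derived subgroup $H'$ sits below the minimal non-abelian term, and take $x$ to be the corresponding variable, so that $H'\subseteq H_{j-1}$.

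Next I would produce the relevant $\mathbb{Z}H$-module. Set $y=\tau(y')$. Hypothesis $(ii)$ gives $y'\in G'$, hence $y\in\tau(G')=H'\subseteq H_{j-1}$; hypothesis $(iii)$ gives $\tau(y')\neq\tau(x'y'x'^{-1})$, that is $xyx^{-1}\neq y$. Thus $x$ is a variable and $y\in H_{j-1}$ satisfies $xyx^{-1}\neq y$, so by Lemma \ref{27} the module $K(x,y)$ of Definition \ref{35} is a stably-free non-free right $\mathbb{Z}H$-module, carrying the generating data $s_1,s_2,w_1,w_2$ described immediately after Lemma \ref{27}.

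It then remains to lift this data compatibly. Following Remark \ref{24}, I would set $r'=1+y'$ or $r'=1+y'+y'^3$ (matching whichever form Stafford's construction dictates for $y$), put $s'=x'r'x'^{-1}$, and then define $s_1'=r'$, $s_2'=x'+s'$, $w_1'=s'x'^{-2}$, $w_2'=x'^{-1}-r'x'^{-2}$ in $\mathbb{Z}G$. Since $\tau$ is a ring homomorphism with $\tau(x')=x$ and $\tau(y')=y$, these satisfy $\tau(s_i')=s_i$ and $\tau(w_i')=w_i$. The one identity to verify is $s_1'w_1'+s_2'w_2'=1$ in $\mathbb{Z}G$; by Remark \ref{30} this reduces to $x'r'x'^{-1}=s'$, which holds by definition, together with $r'$ and $s'$ commuting. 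As $r'$ is a polynomial in $y'$ and $s'$ is the same polynomial in $x'y'x'^{-1}$, the commuting is an immediate consequence of hypothesis $(iii)$. With all hypotheses of Lemma \ref{26} verified, the right $\mathbb{Z}G$-module $K'$ generated by $(w_1's_1'-1,\,w_1's_2')$ and $(w_2's_1',\,w_2's_2'-1)$ is stably-free and non-free, which is the required module.

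The step I expect to be the main obstacle is the placement claim $y\in H_{j-1}$: Stafford's construction, and hence Lemma \ref{27}, genuinely requires the conjugated element to lie in the abelian normal subgroup that the variable acts on, and for a general subnormal series realising $x$ as a variable this abelian term need not contain all of $H'$. The role of hypothesis $(ii)$ is precisely to force $y\in H'$, and the work is to pair it with a subnormal series, as in Lemma \ref{17}, for which $H'\subseteq H_{j-1}$; once this is secured the remaining verifications are formal consequences of Remarks \ref{30} and \ref{24} together with the functoriality of $\tau$.
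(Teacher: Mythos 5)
Your proposal is correct and follows essentially the same route as the paper: lift Stafford's data $r',s',s_i',w_i'$ to $\mathbb{Z}G$, verify $s_1'w_1'+s_2'w_2'=1$ via Remark \ref{30} using hypothesis $(iii)$, check that $x=\tau(x')$ and $y=\tau(y')$ satisfy Lemma \ref{27} (with $y\in H'$ contained in the abelian term $H_{j-1}$ of the series from Lemma \ref{17}), and conclude by Lemma \ref{26}. The only cosmetic difference is ordering (you build $K(x,y)$ first and then lift, whereas the paper constructs $K'$ first and identifies $K(x,y)$ as its image), and your observation about the placement $H'\subseteq H_{j-1}$ is exactly the point the paper makes when it says $y$ lies in the largest abelian factor of that series.
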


\begin{proof}
    Let $r'=1+y'$ (if $r'=1+y'+y'^3$ the same method works) and $s'=1+x'y'x'^{-1}$. Now, if we set $s_1'=r'$, $s_2'=x'+s'$, $w_1'=s'x'^{-2}$ and $w_2'=x'^{-1}-r'x'^{-2}$, by Remark \ref{30} $s_1' w_1'+s_2' w_2'=1$ as $r'$ and $s'$ commute by assumption. Hence by remark \ref{23} these define a stably-free $\mathbb{Z}G$-module $K'$ with generators $(w_1' s_1'-1, w_1' s_2')$ and $(w_2' s_1', w_2' s_2'-1)$.

     We now see that $K(x,y)=\mathbb{Z}G/N\otimes_{\mathbb{Z}G} K'$ is generated by $(w_1 s_1-1, w_1 s_2)$ and $(w_2 s_1, w_2 s_2-1)$ as a right $\mathbb{Z}G/N$-module for $\tau(s_i')=s_i, \tau(w_i')=w_i\in\mathbb{Z}G/N$. We just need to check that $x$, $y$ satisfy the conditions of Lemma \ref{27}. 
    
    In particular $x$ is a variable by assumptions of Theorem \ref{28}, and $y\in G'/N$ means that it is contained in the largest abelian factor of the subnormal series of $G/N=H$ seen in the proof of Lemma \ref{17}. Finally $y'$ and $x'y'x'^{-1}$ have distinct images in $G/N$ by assumption implies $x$ and $y$ don't commute so by Lemma \ref{27} $K(x,y)$ is stably-free non-free. Hence we conclude by Lemma \ref{26} that the same holds for $K'$.\qedhere

\end{proof}

As a corollary to this we prove Theorem \ref{20}:

\begin{proof}

Recall the presentation of the Baumslag-Solitar groups $$BS(m,n)=\langle a,b\mid ba^mb^{-1}=a^n\rangle$$

    By standard result, there is a homomorphism $\phi: BS(m,n)\to GL_2(\mathbb{Q})$ such that $$\phi(a)=\begin{pmatrix}
        1& 1 \\ 0& 1
    \end{pmatrix}\ 
    \phi(b)=\begin{pmatrix}
        n/m&0\\0&1
    \end{pmatrix}$$
    
    It can be seen that $Im(\phi)$ is metabelian, so we let $Im(\phi)$ be our metabelian quotient $G/N$ in the assumptions of Theorem \ref{28}.

    The abelianisation of $BS(m,n)$ is
    $$BS(m,n)^{ab}\cong\mathbb{Z}\oplus\mathbb{Z}/(m-n)$$ so is torsion-free if and only if $n=m+1$ or $n=m-1$, from now on we will assume these conditions on $n$ and $m$.

    To see that $Im(\phi)^{ab}$ is torsion-free, we let $Im(\phi)=H$ and note that $BS(m,n)^{ab}\cong\mathbb{Z}$ surjects onto $H/H'$, so it suffices to check that $H/H'$ has a torsion-free element. But this is clear as $\phi(b)^k\notin H'$ for any $k\in\mathbb{N}$ as has determinant $(n/m)^k$. 

    Let $x'=b$ and $y'=a^m$ in the notation of the hypotheses of Theorem \ref{28}, we will check that all the conditions are satisfied:

    $i)$ To see that $\phi(b)$ is a variable, we consider the following subnormal series of $H=Im(\phi)$:

    $$H\triangleright H'\triangleright \{1\}$$

    with $H/H'\cong\mathbb{Z}$ and $H'\cong\mathbb{Z}$, so we just need that $\phi(b)\notin H'$ as $H'$ the largest non-abelian factor, but we have already seen this above, so $\phi(b)$ a variable.\\

    $ii)$ We need that $a^m\in BS(m,n)'$, it suffices to consider by the presentation that $a\in BS(m,n)'$ as $ba^mb^{-1}a{-m}=a^{n-m}=a$.\\

    $iii)$ $x'y'x'^{-1}=a^n$ and $y'=a^m$, so clearly these commute and are not equal in $Im(\phi)$ as $n=m+1$ or $n=m-1$ implies if $\phi(a^n)=\phi(a^m)$, $\phi(a)$ is the identity in $Im(\phi)$ which is a contradiction.\\

    As all the conditions of Theorem \ref{28} are satisfied, we have the result.\qedhere

    \end{proof}

    \begin{remark}
        We can further give the stably-free non free $\mathbb{Z}BS(m,n)$-module $K$ as the kernel of a $\mathbb{Z}BS(m,n)$-module homomorphism, where we recall how $K$ is constructed in Lemma \ref{26} applied to the module in Definition \ref{35}. In particular we let $r=1+y'=1+a^m$ and $s=x'y'x'^{-1}=a^n$, and

    $$\lambda: (\mathbb{Z}BS(m,n))^2\to\mathbb{Z}BS(m,n)$$ with $$(1,0)\mapsto 1+a^m, (0,1)\mapsto b+a^n$$

    gives $K=ker(\lambda)$.

    \end{remark}

\begin{remark}
    We believe that it would not be difficult to modify the above to obtain an infinite family of isomorphically distinct rank one stably-free modules over the same Baumslag-Solitar groups, the same was done over the Klein bottle group in\cite{MR2846158} based on \cite{MR4928011}, and their methods should extend to this case.
\end{remark}

Theorem \ref{20} yields algebraic $(G,2)$-complexes for $BS(m,m-1)$ by \cite[Proposition 8.18]{MR2012779}, that are chain-homotopically distinct from the trivial one built off the free module of rank $1$ by lemma \ref{che}, with same Euler characteristic, as $BS(m,m-1)$ is aspherical. 

\begin{question} Are such algebraic $(G,2)$-complexes for $G=BS(m,m-1)$ geometrically realisable or conversely do they give a solution to Wall's D2-problem?
\end{question}

Given the recent work of Mannan in \cite{MR2496351} who managed to geometrically realise modules over the Klein bottle group $K$ that were constructed following a similar method we have from \cite{MR2846158}, it seems likely that a similar attempt could be fruitful in geometrically realising our modules over Baumslag-Solitar groups.
\medskip

\bibliographystyle{plain}
\bibliography{bibliography.bib}
\end{document}